\def\@tocline#1#2#3#4#5#6#7{\relax
  \ifnum #1>\c@tocdepth % then omit
  \else
    \par \addpenalty\@secpenalty\addvspace{#2}%
    \begingroup \hyphenpenalty\@M
    \@ifempty{#4}{%
      \@tempdima\csname r@tocindent\number#1\endcsname\relax
    }{%
      \@tempdima#4\relax
    }%
    \parindent\z@ \leftskip#3\relax \advance\leftskip\@tempdima\relax
    \rightskip\@pnumwidth plus4em \parfillskip-\@pnumwidth
    #5\leavevmode\hskip-\@tempdima
      \ifcase #1
       \or\or \hskip 1em \or \hskip 2em \else \hskip 3em \fi%
      #6\nobreak\relax
    \hfill\hbox to\@pnumwidth{\@tocpagenum{#7}}\par% <---- \dotfill -> \hfill
    \nobreak
    \endgroup
  \fi}
\newcommand\e{\varepsilon}
\newcommand\bR{\mathbb{R}}
\newcommand\R{\mathbb{R}}
\newcommand\bS{\mathbb{S}}
\newcommand\Z{\mathbb{Z}}
\newcommand{\CS}{\mathcal{CS}}
\newcommand{\bk}{\mathbb{k}}
\newcommand{\dotr}{\mbox{$\boldsymbol{\cdot}$}}
\DeclareMathOperator{\holim}{holim}
\DeclareMathOperator{\PH}{PH}
\DeclareMathOperator{\PHT}{PHT}
\DeclareMathOperator{\pht}{PHT}
\DeclareMathOperator{\ECT}{ECT}
\DeclareMathOperator{\BCT}{BCT}
\DeclareMathOperator{\CF}{CF}
\DeclareMathOperator{\im}{im}
\DeclareMathOperator{\dom}{dom}
\DeclareMathOperator{\Rad}{\mathcal{R}}
\DeclareMathOperator{\Dgm}{\mathbb{Dgm}}
\DeclareMathOperator{\Fun}{\mathbb{Fun}}
\DeclareMathOperator{\Open}{Open}
\newcommand{\F}{\mathcal{F}}
\newcommand{\calG}{\mathcal{G}}
\newcommand{\D}{\mathcal{D}}
\newcommand{\bX}{\mathbb{X}}
\newcommand{\bY}{\mathbb{Y}}
\newcommand{\cU}{\mathcal{U}}
\newcommand{\cD}{\mathcal{D}}
\newcommand{\cC}{\mathcal{C}}
\newcommand{\cF}{\mathcal{F}}
\newcommand{\cQ}{\mathcal{Q}}
\newcommand{\cK}{\mathcal{K}}
\newcommand{\calB}{\mathcal{B}}
\DeclareMathOperator{\Acat}{\mathbf{A}}
\DeclareMathOperator{\Bcat}{\mathbf{B}}
\DeclareMathOperator{\Set}{\mathbf{Set}}
\DeclareMathOperator{\Top}{\mathbf{Top}}
\DeclareMathOperator{\Vect}{\mathbf{Vect}}
\DeclareMathOperator{\vect}{\mathbf{vect}}
\DeclareMathOperator{\Dat}{\mathbf{Dat}}
\DeclareMathOperator{\pshf}{ps}
\DeclareMathOperator{\cost}{cost}
\DeclareMathOperator{\shff}{sh}
\DeclareMathOperator{\PShv}{\mathbf{PShv}}
\DeclareMathOperator{\Shv}{\mathbf{Shv}}
\DeclareMathOperator{\scrS}{\mathscr{S}}
\DeclareMathOperator{\cha}{\mathrm{ch}(\mathcal{A})}
\newcommand{\define}[1]{\textbf{#1}}
\newtheorem{thm}{Theorem}[section]
\newtheorem{cor}[thm]{Corollary}
\newtheorem{prop}[thm]{Proposition}
\newtheorem{lem}[thm]{Lemma}
\theoremstyle{definition}
\newtheorem{defn}[thm]{Definition}
\newtheorem{exmp}[thm]{Example}
\newtheorem{rmk}[thm]{Remark}
\newcommand\rightthreearrow{%
        \mathrel{\vcenter{\mathsurround0pt
                \ialign{##\crcr
                        \noalign{\nointerlineskip}$\rightarrow$\crcr
                        \noalign{\nointerlineskip}$\rightarrow$\crcr
                        \noalign{\nointerlineskip}$\rightarrow$\crcr
                }%
        }}%
}
\let\c@equation\c@thm
\numberwithin{equation}{section}
\title{A Sheaf-Theoretic Construction of Shape Space}
\author{Shreya Arya}
\address{Department of Mathematics, Duke University; Durham, NC USA}
\email{shreya.arya@duke.edu}
\author{Justin Curry}
\address{Department of Mathematics and Statistics, University at Albany SUNY, Albany, NY USA}
\email{jmcurry@albany.edu}
\author{Sayan Mukherjee}
\address{Departments of Statistical Science, Mathematics, Computer Science, Biostatistics \& Bioinformatics, Duke University; Durham, NC USA; Center for Scalable Data Analytics and Artificial Intelligence Universit\"at Leipzig and the Max Planck Institute for Mathematics in the Natural Sciences; Leipzig Germany}
\email{sayan@stat.duke.edu}
\begin{document}

\begin{abstract}
 We present a sheaf-theoretic construction of shape space---the space of all shapes. We do this by describing a homotopy sheaf on the poset category of constructible sets, where each set is mapped to its Persistent Homology Transforms (PHT). Recent results that build on fundamental work of Schapira have shown that this transform is injective, thus making the PHT a good summary object for each shape. Our homotopy sheaf result allows us to “glue” PHTs of different shapes together to build up the PHT of a larger shape. In the case where our shape is a polyhedron we prove a generalized nerve lemma for the PHT. Finally, by re-examining the sampling result of Smale-Niyogi-Weinberger, we show that we can reliably approximate the PHT of a manifold by a polyhedron up to arbitrary precision.

%\keywords{First keyword \and Second keyword \and More}
\end{abstract}
\maketitle

\tableofcontents

% \shreya{I will create a bib tex. Feel free to put references as bibitems in any order. I will take care of the ordering etc. }

\section{Introduction and Main Results}\label{sec:intro}

Shape spaces are intended to provide a single framework for comparing shapes.
Different shapes are rendered as different points in shape space and comparisons of shapes can be formalized in terms of distances between these different points.
One of the first examples of a shape space was pioneered in \cite{Kendall77,Kendall84} and takes the perspective that two shapes can be compared by first placing landmarks down on each shape.
If these landmarks are related by a similarity transformation---a rotation or dilation---then they are regarded as the same shape.
Non-identical shapes are then compared in an appropriately defined quotient space, assuming the number of landmarks are the same.
Another popular model of shape space dispenses with the landmark selection process and considers shapes as immersed submanifolds \cite{cervera} and then tries to compare them via diffeomorphisms of the ambient space, possibly generated by some optimal transport or control problem \cite{DepuisGrenander98}.

However, common to both the landmark and diffeomorphism-based approaches to shape space is a quotient operation that naturally lends a fiber bundle structure to these data representations and comparisons.
Fiber bundles provide a convenient mathematical language for shape comparison, but previous work \cite{Jost-fiber} also illustrates their insufficiency for general morphological comparison.
Indeed, the implicit assumption for both of these models---and many more models for shape space discussed below in Section \ref{sec:prior-shape-space}---is that the shapes under consideration have the same topology.
% Often one wants to decorate shape space with extra structure, such as a set of landmarks (as in the Kendall approach \cite{Kendall77,Kendall84}) or with a choice of parameterization (as in the Grenander approach \cite{DepuisGrenander98}), but this extra structure is regarded as lying orthogonal to the base manifold of shapes; see Figure \ref{fig:Shape Spaces}.
% Fiber bundles provide a language for formalizing this orthogonality~\cite{Jost-fiber} and can be used to unify Kendall's and Grenander's constructions, as reviewed below, but they are limiting as well.
% Implicit in both the Kendall and Grenander approach to shape space is the assumption that each pair of shapes can be related to one another via one-to-one correspondences; 
% for Kendall these are correspondences of landmarks, and for Grenander these are smooth diffeomorphisms. 
% \shreya{I think this is correct, even though it does not require shapes to be diffeo it requires there is a diffeo in ambient space that connects them. }

This assumption severely limits the applicability of these approaches to many datasets of interest.
For example, in a dataset of fruit fly wings, some mutant flies have extra lobes of veins \cite{Miller}; or, in a dataset of brain arteries, many of the arteries cannot be continuously mapped to each other \cite{bendich2016}. 
Indeed, in large databases such as MorphoSource and Phenome10K \cite{Boyer:2016aa,Goswami:2015aa}, the CT scans of skulls across many clades are not diffeomorphic or even homeomorphic.
Although some methods have been developed to address these issues, a general theory of shape comparison is still lacking.

In this paper we introduce an algebraic construction of shape space that circumvents some of the problems discussed above.
Topologically different subsets of $\bR^d$ can be viewed simultaneously and compared in our framework. 
We accomplish this by passing from the land of fiber bundles to the world of \emph{sheaves}, which replaces the local triviality condition of fiber bundles with the local continuity condition of sheaves.
This passage requires two preparatory steps of categorical generalization:
\begin{enumerate}
    \item Instead of a ``base manifold'' of shapes we work with a ``base poset'' of constructible sets $\CS(\bR^d)$ ordered by inclusion. This poset is equipped with a notion of continuity via a Grothendieck topology.
    \item Each shape---that is, each point $M\in \CS(\bR^d)$---is equivalently regarded via its persistent homology transform $\PHT(M)$, which is an object in the derived category of sheaves $\D^b(\Shv(\bS^{d-1}\times \R))$.
\end{enumerate}

With these observations in place, our main result can be summarized as follows.
\begin{thm}\label{thm:htpy-sheaf}
    The following assignment is a homotopy sheaf:
    \[
	\F:\CS(\bR^d)^{op} \to \D^b(\Shv(\bS^{d-1}\times \bR)) \qquad
	M \mapsto  \PHT(M).
	\]
\end{thm}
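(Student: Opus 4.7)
My plan is to verify the homotopy sheaf condition by reducing it to a two--element Mayer--Vietoris statement and then realising $\PHT$ as a derived pushforward of a constant sheaf, where Mayer--Vietoris is classical. Specifically, the Grothendieck topology on $\CS(\bR^d)$ is generated by finite covers $M=\bigcup_i U_i$ by constructible subsets, and because the target $\D^b(\Shv(\bS^{d-1}\times \bR))$ is triangulated, the homotopy sheaf condition for such finite covers reduces, by induction on $|I|$ together with a truncated \v{C}ech / five--lemma argument, to checking that for every pair $A,B\in\CS(\bR^d)$ the square
\[
\begin{tikzcd}
\PHT(A\cup B)\ar[r]\ar[d] & \PHT(A)\ar[d]\\
\PHT(B)\ar[r] & \PHT(A\cap B)
\end{tikzcd}
\]
is homotopy Cartesian, equivalently that there is a distinguished triangle
\[
\PHT(A\cup B)\longrightarrow \PHT(A)\oplus \PHT(B)\longrightarrow \PHT(A\cap B)\longrightarrow \PHT(A\cup B)[1].
\]

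To produce this triangle, I would realise $\PHT$ as a derived pushforward. Consider the epigraph
\[
E_M \;=\; \{\,(x,v,t)\in M\times \bS^{d-1}\times \bR \;:\; \langle x,v\rangle \le t\,\}\;\subset\; \bR^d\times \bS^{d-1}\times \bR,
\]
let $\pi\colon \bR^d\times \bS^{d-1}\times \bR\to \bS^{d-1}\times \bR$ be projection onto the last two factors, and identify $\PHT(M)\simeq R\pi_*\,\bk_{E_M}$, where $\bk_{E_M}$ is the natural sheaf--theoretic extension of the constant sheaf on the (locally closed, constructible) subset $E_M$. Proper base change identifies its stalk at $(v,t)$ with $H^*(M_v^t)$, the sublevel--set cohomology, confirming agreement with the persistent homology transform. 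The key combinatorial observation is that $M\mapsto E_M$ preserves finite unions and intersections: $E_{A\cup B}=E_A\cup E_B$ and $E_{A\cap B}=E_A\cap E_B$. Consequently the classical Mayer--Vietoris short exact sequence
\[
0\longrightarrow \bk_{E_{A\cup B}}\longrightarrow \bk_{E_A}\oplus \bk_{E_B}\longrightarrow \bk_{E_{A\cap B}}\longrightarrow 0
\]
holds in $\Shv(\bR^d\times \bS^{d-1}\times \bR)$, and applying the triangulated functor $R\pi_*$ produces exactly the required distinguished triangle.

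The main obstacle is making the step ``homotopy sheaf $\Leftrightarrow$ Mayer--Vietoris'' precise with respect to the paper's specific Grothendieck topology on $\CS(\bR^d)$: one must verify that the chosen covers admit a pretopology supporting the inductive reduction to two--element covers, and that constructibility together with proper base change ensures the full \v{C}ech--nerve computation converges in $\D^b(\Shv(\bS^{d-1}\times \bR))$ to $R\pi_*\,\bk_{E_M}$, rather than giving merely a long exact sequence in cohomology. The constructibility hypothesis should make this tractable, since it yields finite \v{C}ech nerves and tame behaviour of $R\pi_*$ on indicator sheaves of constructible subsets; a careful audit of the sheaf conventions for locally closed constructible sets $E_M$ (so that the displayed short exact sequence genuinely holds in the chosen category of sheaves) is the place where the argument has to be handled with the most care.
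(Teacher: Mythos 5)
Your argument is essentially correct, and it is closest in spirit to the paper's alternative proof sketched in Remark \ref{rem:ss-proof} rather than to the main proof given in the text. The key input in both is the same: because the admissible covers are \emph{closed} covers and $Z_M$ (your $E_M$) is closed in $\R^d\times\bS^{d-1}\times\R$, extension by zero agrees with pushforward along the closed inclusion, so the stalkwise-exact sequence $0\to\bk_{Z_{A\cup B}}\to\bk_{Z_A}\oplus\bk_{Z_B}\to\bk_{Z_{A\cap B}}\to 0$ holds on the nose and $R\pi_*$ turns it into the distinguished triangle you want. The paper's Remark \ref{rem:ss-proof} does this for the whole cover at once via the Godement resolution $\bk_{Z_M}\to[\oplus_{|I|=1}\bk_{Z_{M_I}}\to\oplus_{|J|=2}\bk_{Z_{M_J}}\to\cdots]$, which buys you descent for an arbitrary finite cover in one step and avoids the part of your argument that genuinely needs care: the reduction of \v{C}ech descent for $n$-element covers to iterated two-element Mayer--Vietoris squares. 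That reduction is standard (split the index sets $I\subseteq\{1,\dots,n\}$ according to whether they contain a chosen index, exhibiting the \v{C}ech complex of the full cover as a mapping cone of \v{C}ech complexes of smaller covers, or invoke the cd-structure formalism), but you should either carry it out or replace it with the full resolution, since as written it is the one unproved step. By contrast, the paper's main proof works stalkwise with the flabby resolution by singular cochains, identifies the fibre of the shift map with the complex of small cochains subordinate to the closed cover, and then must invoke the o-minimal Triangulation Theorem to show small cochains compute the cohomology of $M_{v,t}$ (the usual barycentric-subdivision argument fails for closed covers). Your sheaf-level route sidesteps that tameness input for the two-element case entirely, which is a genuine simplification; o-minimality then only enters in guaranteeing that the poset $\CS(\R^d)$ has pullbacks and that stalks of $Rf_{M*}\bk_{Z_M}$ are the expected sublevel-set cohomologies via proper base change.
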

This is precisely stated and proved as Theorem~\ref{thm:main-hpty-sheaf} below. 
Intuitively, this result allows us to interpolate between shapes in a continuous way via their persistent homology transforms; continuity is mediated via a Grothendieck topology on $\CS(\bR^d)$.
More precisely, our main result establishes \emph{\v{C}ech descent} for the persistent homology transform, which is a generalization of the sheaf axiom that holds for higher degrees of homology.
In one concrete form, our main result implies the following theorem, which appears as Corollary~\ref{cor:nerve-lemma} below.
\begin{thm}[Nerve Lemma for the PHT]\label{thm:nerve-lemma}
    If $ M\in \CS(\bR^d)$ is a polyhedron, i.e.~it can be written as a finite union of closed linear simplices $ \mathcal{M} = \{\sigma_i\}_{i\in \Lambda}$, then the $n$-th cohomology sheaf of the $PHT$, i.e., the $n^{th}$ PHT sheaf of Definition \ref{defn:pht-sheaf}, written $\PHT^n(M)$, is isomorphic to the $n$-th cohomology of the following complex of sheaves:
	\begin{equation*}
	0\to \bigoplus_{I\subset \Lambda \,s.t.\, |I|=1}\pht^0(\mathcal{M}_I) \to \bigoplus_{I\subset \Lambda \,s.t.\, |I|=2}\pht^0(\mathcal{M}_I) \to \cdots
	\end{equation*}
	Here $\mathcal{M}_I$ with $|I|=k$ denotes the disjoint union of depth $k$ intersections of closed simplices appearing in the cover $\mathcal{M}$.
\end{thm}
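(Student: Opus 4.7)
The plan is to apply Theorem \ref{thm:htpy-sheaf} to the closed cover $\mathcal{M}=\{\sigma_i\}_{i\in\Lambda}$ of the polyhedron $M$ and then exploit the convexity of simplices and their intersections to show that the resulting Čech spectral sequence collapses onto the $\PHT^0$ row.

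First I would verify that $\mathcal{M}$ is a covering family of $M$ in the Grothendieck topology on $\CS(\bR^d)$: since $M=\bigcup_i\sigma_i$ is a finite union of closed constructible pieces, this is a covering in the sense required by the \v Cech-descent formulation of Theorem \ref{thm:htpy-sheaf}. Applying the homotopy-sheaf property to this cover then identifies $\PHT(M)$ with the homotopy limit of the cosimplicial object
\[
\prod_{|I|=1}\PHT(\mathcal{M}_I)\rightrightarrows \prod_{|I|=2}\PHT(\mathcal{M}_I)\rightthreearrow\cdots
\]
in $\D^b(\Shv(\bS^{d-1}\times\bR))$, where $\mathcal{M}_I$ denotes the disjoint union of $|I|$-fold intersections among the $\sigma_i$. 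Totalizing this cosimplicial diagram gives a double complex, and standard homological algebra produces a spectral sequence
\[
E_1^{p,q}=\bigoplus_{|I|=p+1}\PHT^q(\mathcal{M}_I)\;\Longrightarrow\;\PHT^{p+q}(M).
\]

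The key technical step, and in my view the main obstacle, is showing that $\PHT^q(\mathcal{M}_I)=0$ for every $q>0$ and every non-empty $\mathcal{M}_I$. The point is that a non-empty intersection of closed linear simplices is a convex polytope, and for any $v\in\bS^{d-1}$ and $t\in\bR$ the sublevel set $\{x\in\mathcal{M}_I:\langle x,v\rangle\leq t\}$ is the intersection of a convex set with a closed half-space, hence convex and therefore contractible (when non-empty). Since $\mathcal{M}_I$ is a disjoint union of such convex polytopes indexed over non-empty intersections, all of its height-function sublevel sets are disjoint unions of contractibles, so $H^q$ vanishes for $q>0$. Combined with constructibility, this forces the sheaf $\PHT^q(\mathcal{M}_I)$ on $\bS^{d-1}\times\bR$ to vanish for $q>0$; the sheaf $\PHT^0(\mathcal{M}_I)$ is simply the constructible sheaf recording the connected components of the sublevel set filtrations of $\mathcal{M}_I$.

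With $E_1^{p,q}=0$ for $q>0$, the spectral sequence degenerates at $E_2$ and reduces to the $p$-axis. Therefore $\PHT^n(M)\cong E_2^{n,0}$, which is precisely the $n$-th cohomology of the cochain complex
\[
0\to\bigoplus_{|I|=1}\PHT^0(\mathcal{M}_I)\to\bigoplus_{|J|=2}\PHT^0(\mathcal{M}_J)\to\cdots
\]
whose differentials are the alternating-sum restriction maps coming from the face maps of the Čech nerve. This gives the stated isomorphism. The only delicate point beyond the vanishing step is to ensure that the differentials produced by the collapsing spectral sequence agree with the expected Čech signs, which is a formal check once one fixes an ordering of $\Lambda$.
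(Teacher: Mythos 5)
Your proof follows essentially the same route as the paper: apply \v{C}ech descent (Theorem \ref{thm:htpy-sheaf}) to the finite closed cover by simplices and observe that the resulting spectral sequence $E_1^{p,q}=\bigoplus_{|I|=p+1}\PHT^q(\mathcal{M}_I)$ collapses onto the $q=0$ row. Your convexity argument---that nonempty intersections of closed linear simplices are convex polytopes whose half-space sublevel sets are convex, hence contractible, so the stalks $H^q(f_{\mathcal{M}_I}^{-1}(v,t);\bk)$ and therefore the sheaves $\PHT^q(\mathcal{M}_I)$ vanish for $q>0$---correctly supplies the key vanishing step that the paper's own proof merely asserts.
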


% We motivate both of these results with a simpler ``local-to-global principle'' for the Euler characteristic transform (ECT, cf.~Definition \ref{defn:ECT}) that takes the form of a generalized inclusion-exclusion principle (Theorem \ref{thm:decat}).
% In Remark \ref{rmk:decat} we detail exactly how the above two results decategorify to inclusion-exclusion for the ECT.

\begin{figure}
     \centering
     \begin{subfigure}[b]{0.3\textwidth}
         \centering
         \includegraphics[width=\textwidth]{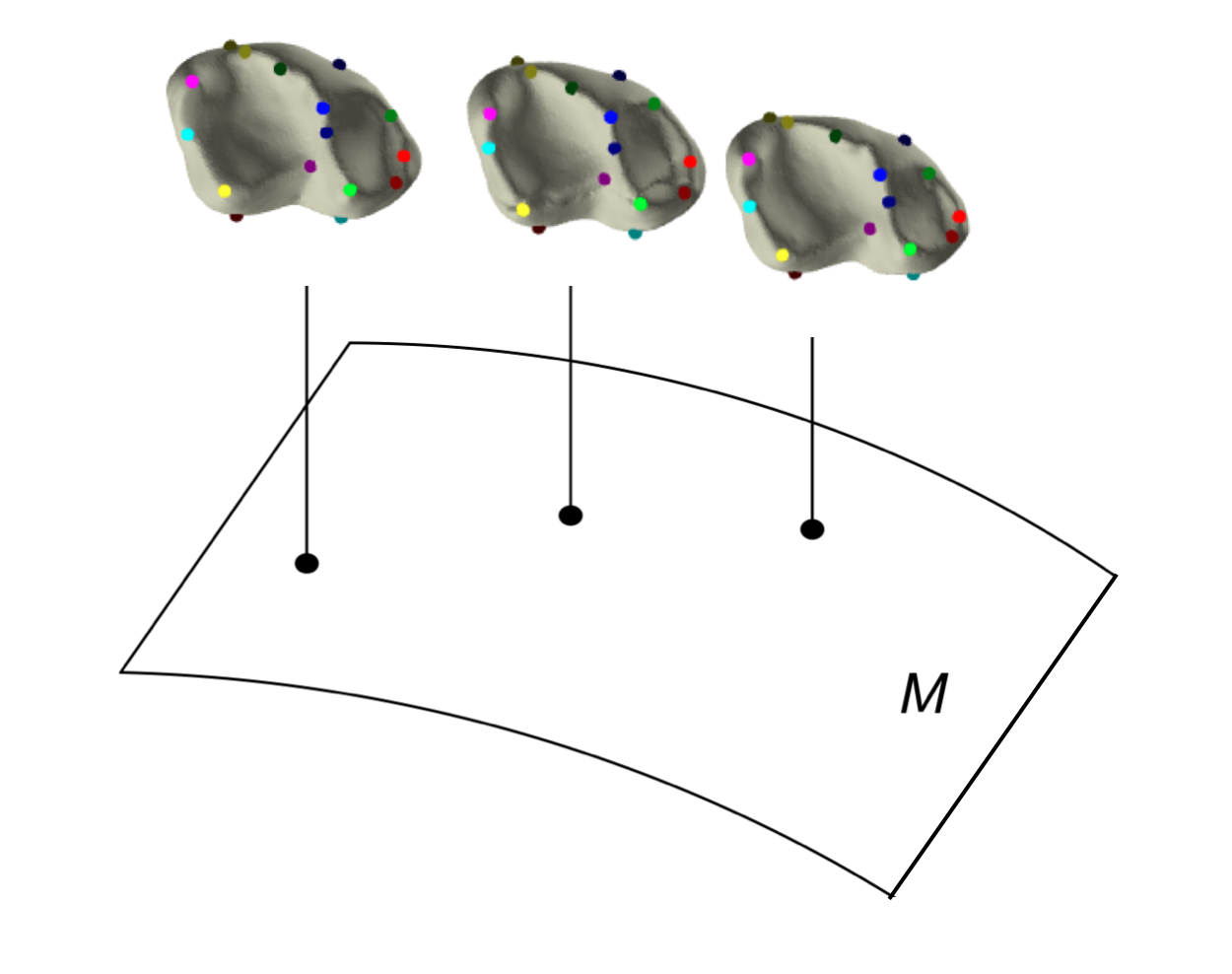}
         \caption{Landmark-based}
         \label{fig:kendall}
     \end{subfigure}
     \hfill
     \begin{subfigure}[b]{0.3\textwidth}
         \centering
         \includegraphics[width=\textwidth]{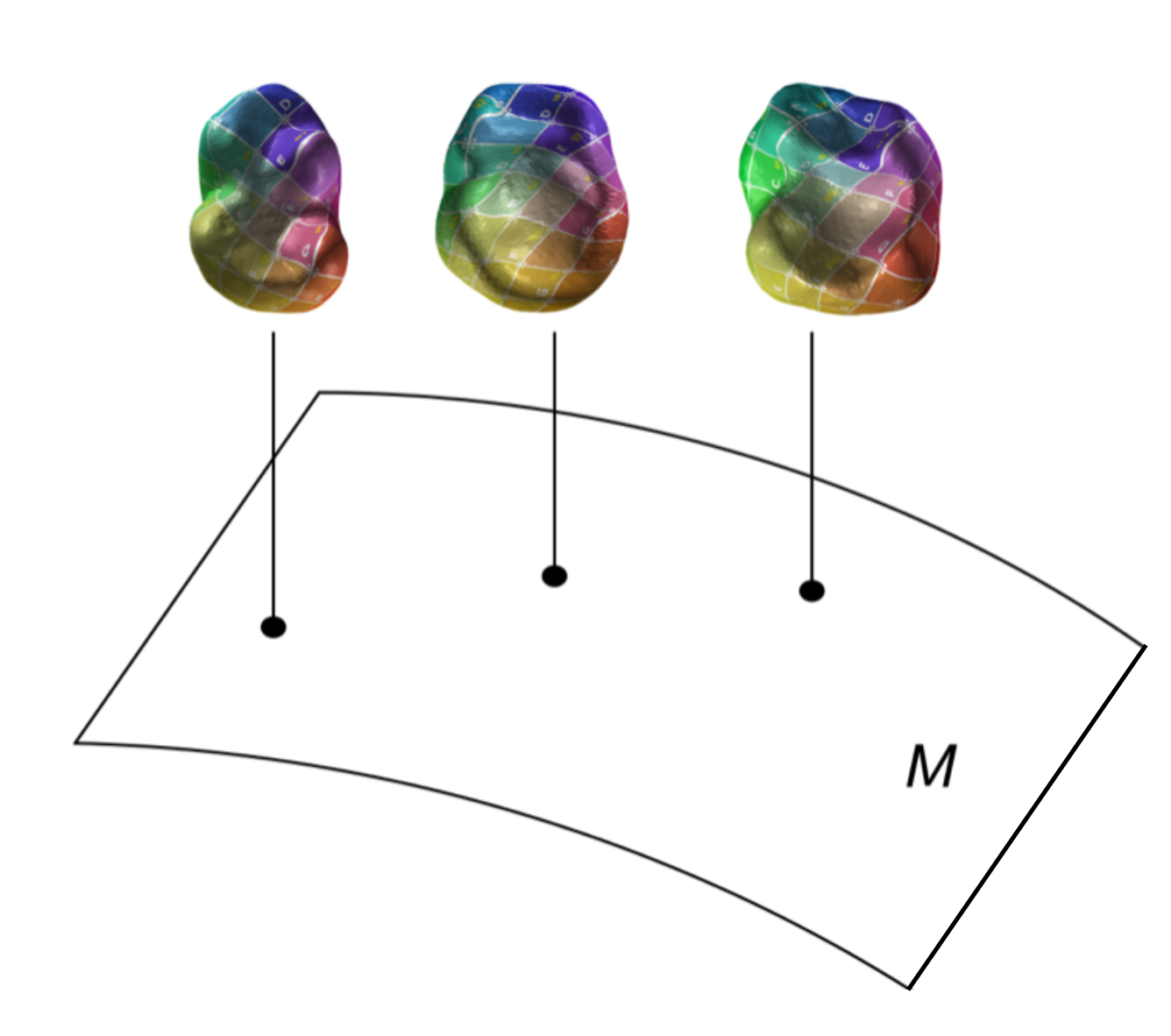}
         \caption{Diffeomorphism-based}
         \label{fig:fiberbundle}
     \end{subfigure}
     \hfill
     \begin{subfigure}[b]{0.3\textwidth}
         \centering
         \includegraphics[width=\textwidth]{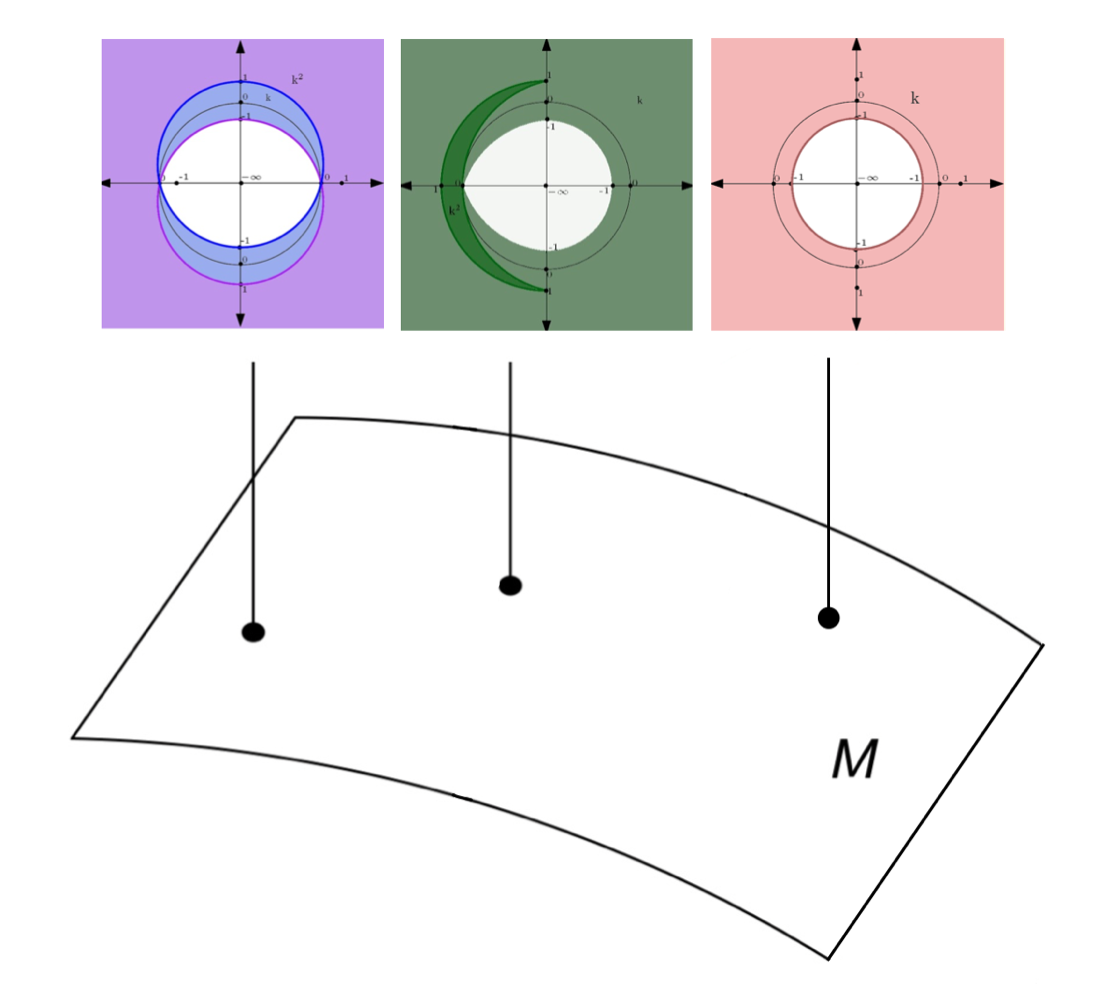}
         \caption{PHT-based}
         \label{fig:stalk}
     \end{subfigure}
     \caption{Previous constructions of shape space imply a fiber bundle perspective on shape space. The fibers encode similarity transformations or reparameterizations of a shape and the base space records the shape as an equivalence class. The PHT-based shape space introduced here uses a more algebraic construction where the base space is replaced by a base poset and the ``fibers'' are unique sheaf-theoretic representations of the shape.}\label{fig:Shape Spaces} 
\end{figure}

% \remove{It should be noted that positive scalar curvature of a constructible set $M$ (when defined) obstructs Theorem \ref{thm:nerve-lemma} from being directly applied, as cover elements may necessarily have higher homology when viewed in a direction normal to that point. See Figure~\ref{fig:PHTGOODCOVER} for an example.}

The importance of Theorem \ref{thm:nerve-lemma} (Corollary \ref{cor:nerve-lemma}) is that it allows us to calculate the higher homology PHTs using only $\PHT^0$ of elements in a PL covering.
Since finding connected components in a filtration is computationally simpler---$\mathcal{O}(n\log n)$---than calculating higher homology, this result illustrates how the theory of spectral sequences provides potential guarantees on computing the full PHT of a shape in all degrees.
However, the ability to cover a shape by PL shapes, and thus approximate a shape using PL ones, is a necessary first step.
% This in turn motivates the question of how much an approximation distorts the resulting PHTs.
% It is well known that the PHT is not stable under Hausdorff perturbations of the shape.
% This motivates us to define several novel metrics on PHTs and prove stability theorems for them.

As such it is desirable to have an approximation result that is provably stable under the persistent homology transform. 
We do this by first proving a general stability theorem for the PHT---after introducing several new metrics in Section \ref{sec: distance-on-pht}---under controlled homotopies.
The following theorem appears as Theorem \ref{thm::stability} below.

\begin{thm}[Stability of the PHT]
    Any two constructible sets $M,N \in \CS(\bR^{d})$ that are homotopy equivalent via homotopies that move no point more than $\e$ have $\e$-close PHTs of $M$ and $N$.
\end{thm}

Finally, using the sampling methods of \cite{niyogi}, we show how to construct with high confidence a PL approximation of any nice enough submanifold of $\R^d$.

\begin{cor}[Approximation of the PHT]\label{thm:approx}
    For any compact submanifold $M$ and $\e>0$ sufficiently small we can construct a polyhedron $N$ so that with high probability $\PHT(M)$ and $\PHT(N)$ are $\e$-close. $\PHT(N)$ can then be computed using Theorem \ref{thm:nerve-lemma}.
\end{cor}

\subsection{Prior Work on Shape Space}\label{sec:prior-shape-space}

We now outline in more detail prior approaches to shape space, in order to better situate the contributions of this paper.
There is a rich history of using differential geometry for modeling shapes, dating back to Riemann \cite{Jost-Riemann}, and the reader is encouraged to consult the survey articles \cite{Bauer-Shape-Survey} and \cite{fowlkes} for more context there.
%\shreya{\cite{Bauer-Shape-Survey} papers survey only one method. Maybe can add comp. anatomy survey ref \cite{miller2015} }
%\remove{However, at a high-level, there are three main approaches to shape space that have been worked out in some detail: the landmark approach; the diffeomorphism and optimal control approach; and the persistent homology transform and Euler characteristic transform approach endorsed here.}
However, what follows is a woefully incomplete survey of the literature that tries to balance the theoretical study of shape space as well as the development of algorithms that are used in practice.

As already mentioned, the landmark-based approach to shape space was pioneered in the works of Kendall \cite{Kendall77,Kendall84} and was given a textbook-length treatment in Bookstein \cite{Bookstein97}. 
In this approach, a pre-shape is defined by a set of $k$ landmark points in $\bR^d$, where typically $d = 2,3$. 
Comparing pre-shapes operates under the assumption that the $i$-th point in one shape corresponds to the $i$-th point in every other pre-shape; this introduces the central notion of correspondences in shape space and leads naturally to Procrustean distances, see~\cite{Kendall84} and more generally~\cite{procrustes}. 
Shapes are then regarded as elements of the quotient space 
\[
\Sigma_d^k := \{(\bR^d)^{k-1}\setminus 0\}/\mbox{Sim},
\]
where Sim is the group of rotations and dilations. 
Note that in $\Sigma_d^k$ a shape is deduced from a $d \times k$ matrix, which is a very convenient representation. 
However, the downside of this approach is that a user will need to decide on landmarks before analysis can be carried out, and reducing modern databases of 3-dimensional micro-computed tomography (CT) scans \cite{Goswami:2015aa,Boyer:2016aa} to landmarks can result in a great deal of information loss. 

% The second commonly accepted shape space was pioneered by Grenander \cite{DepuisGrenander98}, although some aspects were anticipated by \cite{cervera}. In these works, a shape space is specified for each manifold $M$ and dimension $d$. One then considers all possible immersions modulo the group of reparameterizations of $M$, i.e.
% \[
%     \text{Shape}(M) := \text{Imm}(M,\R^d)/\text{Diff}(M).
% \]
% Variation in shape is then modeled by the action of the Lie group of diffeomorphisms on $\R^d$. 
% The advantage of Grenander's approach is that it bypasses the need for landmarks, but the resulting spaces of interest are infinite-dimensional and shapes with different topology cannot be compared. 
% \remove{However, many tools have been developed that efficiently compare the similarity between shapes in large databases via algorithms that continuously deform one shape into another \cite{Boyer:2011aa,Ovsjanikov:2012aa,Boyer:2015aa,Gao:2018ab}.}

The second approach to shape space mentioned above really is a broader umbrella of techniques, but they are all connected by the use of larger, infinite-dimensional groups of deformations to compare and match shapes using an underlying template.
Pulling on ideas from analysis, differential geometry, and probability theory, these approaches are sometimes gathered under the heading of ``Pattern Theory'', see \cite{Grenander:1996,Grenander:1998} and \cite{mumford-pattern-theory}.
In many of these approaches, the collection of shapes is regarded as a homogeneous space under the group of diffeomorphisms of $\mathbb{R}^d$.
The difference between two shapes under this paradigm is encoded by the deformation that matches one shape to another. 
Theses differences can be measured algorithmically by solving a registration problem, with the large deformation diffeomorphic metric mapping (LDDMM) \cite{beg:2005,joshi,argui2015} being one such class of methods. 
%Roughly speaking these algorithms measure the amount the ambient space has to be deformed so that the desired shape is formed.
These algorithms typically find the optimal one-parameter family of diffeomorphisms, parametrized by time, that smoothly transforms the initial shape to the target; the optimization problem considered is over paths with the least time integrated kinetic energy. 
The theory of these shape spaces typically involves the geometry (and Riemannian structure) on the infinite-dimensional space of immersions of a fixed template into $\R^d$, written $\mathrm{Imm}(M,\bR^d)$, and the resulting, induced $\mathrm{Diff}(M)$-invariant metrics \cite{mumford,michor-mum}.  
%which led to the study of the geometry of shape spaces and diffeomorphims \cite{Bauer-Shape-Survey}).
Although this viewpoint bypasses the need for landmarks, the resulting spaces of interest are infinite-dimensional. 

There are many more approaches to shape comparison worth mentioning. In similar spirit to the above, there is elastic shape analysis \cite{younes:1998, mio:2007,Srivastava2010} as well as approaches based on conformal geometry \cite{wang, Sharon}, currents \cite{valliant}, and varifolds \cite{charon}. 
%Not all of these approaches require the topology of the shapes to be the same, for example, the metric geometry   shaped based signatures perspective and the currents perspective works for any subsets of Euclidean space.}}
%\question{A lot of the methods do not require that the shapes have the same topology, but rather require that there exists a diffeomorphism of the ambient space that takes one shape to another.  When the shapes are embeddings, then this implies that the shapes are diffeomorphic but this is not true when shapes are immersions. }
Although each of these make varying assumptions on the geometric rigidity of the underlying shapes, the approaches of \cite{memoli} and \cite{bronstein:2010} are notable for working with probabilistic variations on the Gromov-Hausdorff model for shapes, which is the most general and works for arbitrary metric spaces.

Finally, the shape space model that we propose builds on fundamental work of Schapira \cite{schapira-op,schapira-tom}, who implicitly developed the Euler characteristic transform (ECT), which was later developed alongside the
persistent homology transform (PHT) in \cite{tmb} to compare triangulable shapes.
The ECT and PHT have two useful properties: standard statistical methods can be applied to the transformed shape and the transforms are injective \cite{Ghrist:2018aa,cmt}, so no information about the shape is lost via the transform. 
The utility of the transforms for applied problems in evolutionary anthropology, biomedical
applications and plant biology were demonstrated in \cite{Crawford:2017aa,10.1214/20-AOAS1430,ProSinatra,Amezquita-2022}.
%The shape space we construct in this paper is a dramatic generalization of the sheaf-theoretic formulation of the PHT found in \cite{cmt}. 

\subsection{Our Contribution to Shape Space Theory}
% \color{blue}Respond to referee about how the PHT performs better than other shape space analysis techniques. Focus on distances between PHTs.
% \color{black}
Although \cite{tmb} pioneered the use of the ECT and PHT for shape classification and machine learning, the algebraic relation between the transforms of varying shapes was not realized.
In this paper we leverage the functoriality properties of homology to show how the PHT admits a finer organizing structure for the space of all triangulable shapes, ordered by inclusion.
In this sense, we move beyond the particular mathematical properties proved in \cite{cmt}, to a broader outlook of how to move between shapes using a sheaf structure.
Pinning down this structure occupies the bulk of the paper as it requires an unprecedented amount of algebra for shape space theory, including the first known contact with $\infty$-categories, currently isolated to Appendix \ref{app:infinity}.
Although the current paper is primarily directed towards developing the full algebraic picture of a PHT-based shape space, we have also made several innovations on the metrics and analysis side, with some preliminary comparison with Procrustes-type distances detailed in Section \ref{sec:compare-distances}.
A future research program for more systematically comparing our approach with the approaches in Section \ref{sec:prior-shape-space} is outlined in Section \ref{sec:future-work}.

\section{Background on Constructibility, Persistent Homology and Sheaves}\label{sec:background}

In this section we recall background material from \cite{cmt}.
We proceed by defining the class of shapes that we want to work with---constructible sets (Definition \ref{defn:o-minimal})---then identify an increasingly refined set of topological transforms for studying these.
The Euler characteristic transform (ECT), the simplest of them all, paves the way for the Betti curve transform (BCT) and Persistent Homology Transform (PHT). 

%\justin{Will rewrite to include more material on topological transforms, including ECT/BCT/PHT.}

\subsection{O-Minimality}
Although shapes in the real world can exhibit wonderful complexity, we impose a fairly weak tameness hypothesis that prohibits us from considering infinitely constructed shapes such as fractals and Cantor sets.
This tameness hypothesis is best expressed using the language of o-minimal structures \cite{tametopology}. 

\begin{defn}\label{defn:o-minimal}
An \define{o-minimal structure} $\mathcal{O}= \{\mathcal{O}_d\}$, is a specification of a boolean algebra of subsets $\mathcal{O}_d$ of $\bR^d$ for each natural number $d\geq 0$.
In particular, we assume that $\mathcal{O}_1$ contains only finite unions of points and intervals.
We further require that $\mathcal{O}$ be closed under certain product and projection operators, i.e.~if $A \in \mathcal{O}_d$, then $A\times \bR$ and $\bR\times A$ are in $\mathcal{O}_{d+1}$; 
and if $A \in \mathcal{O}_{d+1}$, then $\pi(A)\in \mathcal{O}_d$ where $\pi: \bR^{d+1}\to \bR^d$ is axis aligned projection. 
It is a fact that $\mathcal{O}$ contains all semi-algebraic sets, but may contain certain regular expansions of these sets.
Elements of $\mathcal{O}$ are called \define{definable sets} and \emph{compact} definable sets are called \define{constructible sets}.
The subcollection of constructible subsets in $\mathcal{O}_d$ is denoted $\CS(\R^d)$.
\end{defn}

Intuitively, any shape that can be faithfully represented via a mesh on a computer is a constructible set.
This is because every constructible set is triangulable \cite{tametopology}.
This property also implies that certain algebraic topological signatures, such as Euler characteristic and homology, are well-defined for any constructible set.

\begin{figure}
\centering
\includegraphics[width=\textwidth]{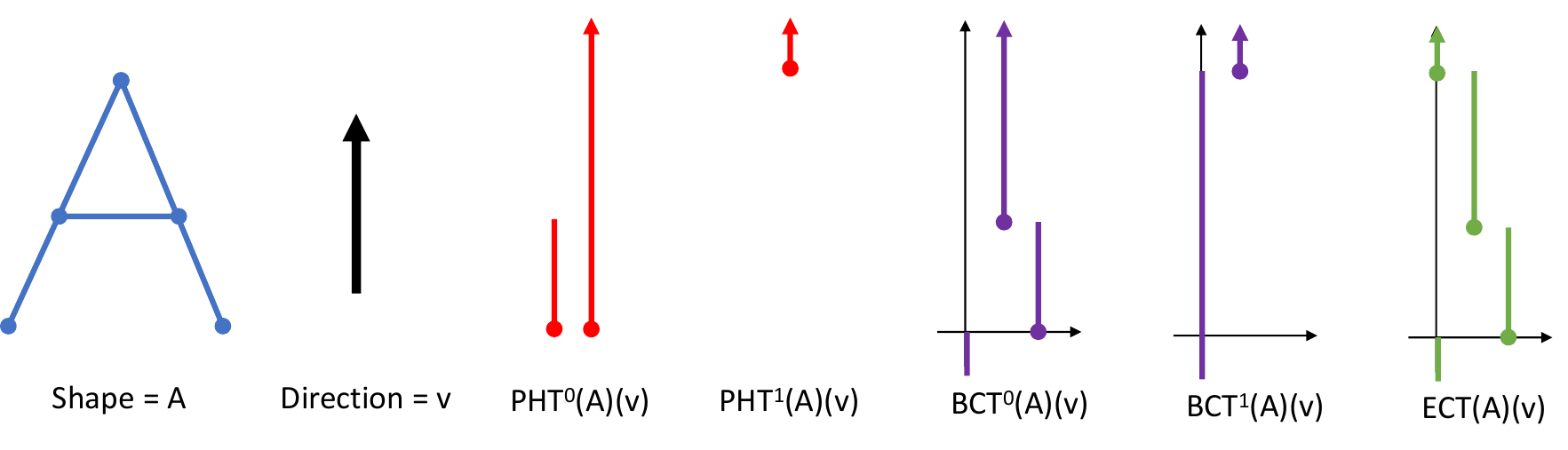}
\caption{The letter `A' summarized using three, increasingly coarse, topological transforms. The Persistent Homology Transform (PHT) and the Betti Curve Transform (BCT) are both graded by homological degree, but the Euler Characteristic Transform (ECT) produces a single integer-valued function for each direction $v$.}\label{fig:top-transforms-A}
\end{figure}

\subsection{Euler Characteristic and the Radon Transform}\label{sec:euler-char-intro}

Any triangulable space $M$ has a well-defined Euler characteristic, which is given by the formula
\[
\chi(M)=\sum_{i=0}^{\infty} (-1)^i \# i-\text{dimensional simplices}.
\]
The fact that this is a topological invariant---meaning it is independent of the choice of triangulation---is a surprising, if not routine, exercise in homology theory, which stands as the deeper invariant that we will work towards.
However, even this simple alternating count of cells can prove to be a powerful invariant for an embedded subset $M\in \CS(\R^d)$ when filtered in a given direction $v$; see the rightmost part of Figure \ref{fig:top-transforms-A} for an example.
This provides our first definition of a topological transform, which we will refine with other topological invariants in due course.

  \begin{defn}[ECT: Map Version]\label{defn:ECT}
    The \define{Euler Characteristic Transform (ECT)} of a constructible set $M\in \CS(\R^d)$ is the map that assigns to each direction $v\in \bS^{d-1}$ the piece-wise constant integer-valued function on $\R$ that records the Euler characteristic of the sublevel set of $M$ in direction $v$, i.e.,
    \[ \ECT(M):\bS^{d-1} \to \Fun(\R,\Z) \qquad \ECT(M)(v,t) = \chi\big(M_{v,t}\big), \]
    where $M_{v,t}:=\{x\in M \mid x\cdot v \leq t\}$ is the intersection of $M$ with the half-space $x\cdot v\leq t$.

    We note that since $M$ is constructible and the equation defining a sub-level set is semi-algebraic---and hence definable---the intersection $M_{v,t}$ is constructible as well, and thus has a well-defined Euler characteristic.
  \end{defn}

%Alternatively, one can view the Euler Characteristic Transform of $M$ as the Radon transform of its indicator function on $\bR^d$ \cite{cmt,Ghrist:2018aa}.
%This perspective requires using the operations of Euler Calculus to their full effect \cite{schapira-op}.

As our notation suggests, one can also view $\ECT(M)$ as a function from $\bS^{d-1}\times \R$ to $\Z$ that assigns to each pair $(v,t)\in \bS^{d-1}\times \R$ the Euler characteristic $\chi(M_{v,t})$. 
This perspective is important because it allows us to view the ECT as a type of integral transform, which takes a shape (viewed as an indicator function on $\R^d$) and produces a function on $\bS^{d-1}\times \R$---a coordinate system built for tomographic comparison.
To make this precise, we briefly review the key ingredients of Euler calculus \cite{curry2012euler}, which treats Euler characteristic as a type of measure\footnote{More accurately, $\chi$ defines a \emph{valuation} \cite{alesker2018introduction} on constructible sets, as it can take negative values.}.

% %The most important aspect of Schapira's work is that it provided a dictionary between certain operations on constructible functions with operations on constructible sheaves, which collectively determine what is called the Euler calculus \cite{euler}
\begin{defn}[Constructible Functions, Integration, Pushforward and Pullback, cf.~\cite{schapira-op}]
If $X\in \CS(\R^d)$, then a \define{constructible function} is an integer valued function $f:X \to \Z$ with only finitely many non-empty level sets, each of which are definable, and hence triangulable.
We denote the subgroup of constructible functions on $X$ by $\text{CF}(X)\subset \Fun(X,\Z)$.

If $\varphi:X\to Y$ is a (tame) mapping between constructible sets, then we have \define{pushforward} $\varphi_*:\text{CF}(X) \to \text{CF}(Y)$ and \define{pullback} $\varphi^*:\text{CF}(Y) \to \text{CF}(X)$ operations via
\[
    \varphi_* f (y) :=\int_{\varphi^{-1}(y)} f d\chi \quad \text{and} \quad \varphi^*g(x):=g(\varphi(x)) \quad \text{where} \quad \int_X f d\chi := \sum_{n=-\infty}^{\infty} n \cdot \chi(f^{-1}(n))
\]
is integration with respect to compactly-supported Euler characteristic.
\end{defn}

% % A definable mapping $\varphi:X\to Y$ then specifies two group homomorphisms $\varphi_*:\CF(X) \to \CF(Y)$ and $\varphi^*:\CF(Y) \to \CF(X)$ via the formulas
% % \[
% % \varphi_* f (y) :=\int_{\varphi^{-1}(y)} f d\chi \qquad \text{and} \qquad \varphi^*g(x):=g(\varphi(x)).
% % \]
 
  \begin{defn}[Radon Transform and ECT: Version 2]\label{defn:radon-ECT}
    Let $S\subset X\times Y$ be a closed constructible subset of the product of two constructible sets. 
    Let $\pi_X$ and $\pi_Y$ be the projections onto the indicated factors. 
    The \define{Radon Transform} with respect to $S$ is a group homomorphism $\mathcal{R}_S:\text{CF}(X)\to\text{CF}(Y)$ defined by
    \[ 
    \mathcal{R}_S(\phi) := (\pi_Y)_* \big[ ((\pi_X)^*\phi) \mathbb{1}_S \big]
    \quad \text{where} \quad \mathcal{R}_S(\phi)(y)= \int_{\pi_Y^{-1}(y)} (\phi \circ \pi_X)\mathbb{1}_S d\chi. 
    \]
    Notice that by taking 
    $\phi= \mathbb{1}_M$ and $S= \{(x,v,t)\in \R^d\times \bS^{d-1}\times \bR \mid x\cdot v \leq t\}$ 
    the Euler characteristic transform of $M$ coincides with the Radon transform of its indicator function. 
  \end{defn} 

A celebrated theorem of Schapira~\cite{schapira-tom} gives a criterion for determining the invertibility of the Radon transform $\mathcal{R}_S$ in terms of the Euler characteristic of the fibers of $S\subset X \times Y$, when projected to each of these two factors:

\begin{thm}[\cite{schapira-tom} Theorem 3.1]\label{thm:inversion}
If $S\subset X\times Y$ and $S'\subset Y\times X$ have fibers $S_x$ and $S'_x$ in $Y$ satisfying
\begin{enumerate}
	\item $\chi(S_x\cap S'_x)=\chi_1$ for all $x\in X$, and
	\item $\chi(S_x\cap S'_{x'})=\chi_2$ for all $x'\neq x \in X$,
\end{enumerate}
then for all $\phi\in \CF(X)$,
\[
(\Rad_{S'} \circ \Rad_{S})\phi = (\chi_1-\chi_2)\phi +\chi_2 \left(\int_X \phi d\chi\right) 1_X .
\]
\end{thm}

In \cite{cmt} this was used to prove that the Euler Characteristic Transform is injective.

\begin{thm}[\cite{cmt} Theorem 3.5]\label{thm:ECT-injects}
The Euler Characteristic Transform $\ECT: \CS(\R^d) \to \CF(S^{d-1} \times \R)$ is injective, i.e.~if $\ECT(M)=\ECT(M')$, then $M=M'$.
% \[
% \text{If } \ECT(M)=\ECT(M') \text{ then } M=M'.
% \]
\end{thm}

\subsection{Homology and the Betti Curve Transform}\label{sec:homology-and-BCT}

A useful algebraic summary of a constructible set $M\in \CS(\R^d)$ is homology in degree $n$ with coefficients in a field $\Bbbk$, written $H_n(M;\Bbbk)$. 
Although there are many flavors of homology---simplicial, cellular, and singular, to name a few---for constructible sets they all agree.

As a reminder of the simplest version of homology---simplicial homology---one uses a triangulation of $M$ to define the group of $n$-chains, written $C_n(M;\Bbbk)$, which is the $\Bbbk$-vector space generated by all the $n$-simplices in the triangulation of $M$.
Since an $n$-simplex can be viewed as a formal combination of $n+1$ vertices $\sigma=\{v_{i_0},\ldots,v_{i_n}\}$, one has a natural map from $n$-chains to $n-1$-chains given by deleting one vertex at a time.
To make this precise, and to operate over fields $\Bbbk\neq \mathbb{F}_2$, we choose a local orientation of each simplex $\sigma$. 
This amounts to specifying an ordering of the vertices so that we can present $\sigma=[v_{i_0},\ldots,v_{i_n}]$ as an ordered list rather than a set of vertices.
The $n$-boundary operator $\partial_n$ takes a basis vector $\sigma$ to
\[
\partial_n \sigma = \sum_{j=0}^{n} (-1)^j [v_{i_0},\ldots,\hat{v}_{i_j},\ldots,v_{i_n}],
\]
where $\hat{v}_{i_j}$ indicates the vertex being deleted.
Extending this definition linearly, one obtains a linear map $\partial_n:C_n(M;\Bbbk) \to C_{n-1}(M;\Bbbk)$. 
It is a tedious, but routine, check to verify that $\partial_{n-1} \circ \partial_n=0$ for every $n\geq 0$.
This then guarantees that $\im \partial_{n+1} \subseteq \ker \partial_n$, which allows us to define the $n^{th}$ homology group\footnote{Actually a vector space, since we're using field coefficients.} of $M$ as
\[
H_n(M;\Bbbk) := \frac{\ker \partial_n}{\im \partial_{n+1}}.
\]
The rank/dimension of the $n^{th}$ homology group of $M$ is also called the $n^{th}$ Betti number $\beta_n(M)$, and it serves as another topological invariant of $M$, which can be used to distinguish shapes such as the torus $\mathbb{T}$ and the 2-sphere $\bS^2$, because $\beta_1(\mathbb{T})=2$ and $\beta_1(\bS^2)=0$.
Intuitively, the Betti numbers count the number of ``holes'' in each dimension. 
The circle $\bS^1$, or the letter `A' in Figure \ref{fig:top-transforms-A}, each have one hole in dimension 1 so $\beta_1(\bS^1)=\beta_1(\text{`A'})=1$.
Because the sphere $\bS^2$ and the torus $\mathbb{T}$ have a hollow interior which disconnects the complement when viewed as a subset of $\R^3$, they each have a single two-dimensional ``hole'', reflected in the calculation $\beta_2(\bS^2)=\beta_2(\mathbb{T})=1$.
In general, a compact subset of $\R^d$ cannot have any ``holes'' in dimension $d$ or higher, thus guaranteeing that $\beta_{i}(M)=0$ for $i\geq d$.

The Betti numbers also provide a ``lift'' of the Euler characteristic in the sense that it both refines the Euler characteristic via the formula
\[
\chi(M):=\sum_{i=0}^d (-1)^i\beta_i(M),
\]
but it also distinguishes spaces that the Euler characteristic cannot, e.g.~$\chi(\mathbb{T})=\chi(\bS^1)=0$, but $\beta_1(\mathbb{T})\neq \beta_1(\bS^1)$.
Filtering in each direction also lifts the Euler characteristic transform.

  \begin{defn}[BCT: Map Version]\label{defn:ECT}
    The \define{Betti Curve Transform (BCT)} of a constructible set $M\in \CS(\R^d)$ associates to each direction $v\in \bS^{d-1}$ the vector of piece-wise constant integer-valued functions on $\R$ that record the Betti numbers of the sublevel set of $M$ in direction $v$, i.e.,
    \[ \text{BCT}(M):\bS^{d-1} \to \CF(\R)^d \qquad \BCT(M)(v,t) = \big(\beta_0\big(M_{v,t}\big),\ldots,\beta_{d-1}\big(M_{v,t}\big)\big). \]
    %where $M_{v,t}:=\{x\in M \mid x\cdot v \leq t\}$ is the intersection of $M$ with the half-space $x\cdot v\leq t$.
  \end{defn}

\subsection{Functoriality and the Persistent Homology Transform}

Implicit in the definition of both the ECT and the BCT is the study of topological invariants of a 1-parameter family of spaces.
Studying how these invariants vary as a function of the parameter $t\in \R$ is the central idea of persistent topology.
Of all the topological invariants discussed---Euler characteristic, homology, and Betti numbers---homology is far more structured than the other two.
This is due to the \emph{functoriality} of homology, which states that maps on spaces $f:X\to Y$ induce maps on homology groups $f_*:H_*(X)\to H_*(Y)$ and these maps compose correctly, i.e.,~if $g:Y\to Z$ is another map, then the induced map on homology of the composition $(g\circ f)_*$ is the same as the composition of the induced maps $g_*\circ f_*$.
This is summarized by saying that homology in each degree $i$ defines a functor $H_i:\Top \to \Vect$ from the category\footnote{See \cite{riehl2017category} for a good review of category theory.} of topological spaces and continuous maps to the category of vector spaces and linear maps.

\begin{figure}
\centering
\includegraphics[width=.9\textwidth]{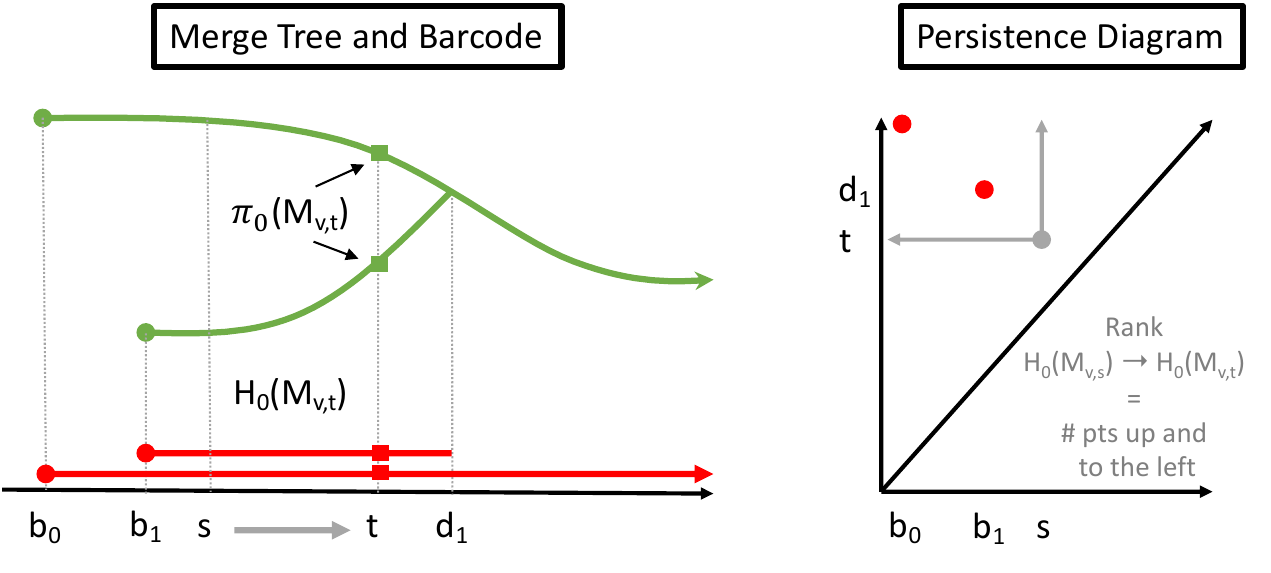}
\caption{When filtering a shape $M$ in the direction $v$, one tracks connected components using the functor $\pi_0$. The merge tree (shown above left) summarizes the 1-parameter family of sets $\pi_0(M_{v,t})$ and one can read off the maps by tracing components along the merge tree. Zeroth homology $H_0$ linearizes this family of sets and maps to produce vector spaces and linear maps. The barcode decomposition of $H_0(M_{v,t})$ is a canonical representation of this family of vector spaces and maps using a multi-set of intervals. The number of bars overlapping a point $t$ indicates the dimension of the vector space $H_0(M_{v,t})$ and the number of intervals spanning an interval encodes the rank of the map $H_0(M_{v,s})\to H_0(M_{v,t})$. The persistence diagram---our preferred representation---encodes these intervals as points above the diagonal. The rank of each map is encoded in this second representation by counting the number of points up and to the left.}\label{fig:mt-bc-pd-explanation}
\end{figure}

In traditional (sublevel set) persistent homology, one organizes the filtration of a shape $M$ in direction $v$ as a functor $F_v:(\R,\leq)\to \Top$, where $F(t):=M_{v,t}$ is the subset of $M$ cut out by $x\cdot v\leq t$ and $F(s\leq t):M_s \subseteq M_t$ is the induced inclusion of these subsets.
Post-composing the filtration functor with the homology functor defines the \define{persistent homology} of the filtration, written $\PH_i(M_v):=H_i\circ F_v : (\R,\leq) \to \Vect$.
It is a remarkable fact of representation theory \cite{crawley-boevey} that whenever this functor lands in the sub-category of \emph{finite-dimensional} vector spaces and maps, written $\PH_i(M_v): (\R,\leq) \to \vect$, then persistent homology decomposes into a collection of indecomposable ``building block'' functors called interval modules, i.e.,~$PH_i(M_v)\cong \bigoplus_I I^{n_I}_{\Bbbk}$, where each $I^{n_I}_{\Bbbk}:(\R,\leq)\to\vect$ assigns $\Bbbk^{n_I}$ to points in the interval\footnote{For a sublevel set filtration of a constructible set, the intervals in this decomposition are always half-open (closed on the left and open on the right). We will ignore all other types of intervals.} $I=[b,d)$, the identity map to $I^{n_I}_{\Bbbk}(s\leq t)$ whenever $s,t\in [b,d)$, and outside of this interval the vector spaces and maps are 0. 
This multi-set of intervals $\mathcal{B}=\{(I,n_I)\}$ defines the \define{barcode} of $\PH_i(M)$, which is alternatively represented as a \define{persistence diagram}; see Figure \ref{fig:mt-bc-pd-explanation}.
It should be noted that historically the first paper to introduce persistent homology and an efficient algorithm for its computation \cite{ELZ-2000} did not utilize the decomposition result of \cite{crawley-boevey}, but rather used the barcode/persistence diagram representation as a convenient device for representing the ranks of each map on homology $\PH_i(s\leq t)$ for arbitrary $s\leq t$.
This perspective was later formalized and generalized by \cite{patel2018generalized}, which observed that the persistence diagram can be defined as the M\"{o}bius inversion of the rank function.

Regardless of its theoretical grounding, the persistence diagram allows to metrize functors from $(\R,\leq)$ to $\vect$ using any one of many Wasserstein-type metrics, as explained in the next two definitions below.

\begin{defn}[Persistence Diagram Space]\label{def:pd-space}
    The set of all possible persistence diagrams, written $\Dgm$, is the set of all countable multi-sets (sets with repetition allowed) of 
    \[
    \R^{2+}:=\{I=(b,d) \in (\{ - \infty\} \cup \R)\times (\R\cup\{\infty\}) \mid b \leq d\}
    \]
    where the number of points of the form $(b, \infty)$ and $(-\infty, d)$ are finite and $\sum_{d-b<\infty} d-b <\infty$.
    Points of the form $(b,\infty)$ or $(-\infty,d)$ are called \define{essential classes} and points of the form $(b,d)$ where neither coordinate is $\infty$ are called \define{inessential classes}.
    For sublevel set filtrations of a constructible set $M$, there are only inessential classes and essential classes of the form $[b,\infty)$.
    We can regard any persistence diagram $\mathcal{B}$ as a set rather than a multi-set by using a second coordinate to enumerate copies of each interval $I$, i.e., $\mathcal{B}=\{(I;j) \mid (I;j) \in \R^{2+}\times \mathbb{N}\}$ where $j$ indicates the $j^{th}$ copy of $I$.
\end{defn}

\begin{defn}[Matchings and the $p$-Wasserstein Distances]\label{def:Wasserstein-distances}
    Suppose $\mathcal{B}=\{(I;j)\}$ and $\mathcal{B}'=\{(I';j)\}$
    are two persistence diagrams.
    A \define{matching} of these is a partial bijection $\sigma:\mathcal{B}\to\mathcal{B'}$, i.e., a choice of subset $\mathcal{M}\subseteq \calB$, called the domain $\dom(\sigma)$, and an injection $\sigma:\mathcal{M}\to\calB'$.
    The complement of the domain of $\sigma$ as $\dom^c(\sigma):=\calB \setminus \dom(\sigma)$ and the complement of the image of $\sigma$ as $\im^c(\sigma):=\calB'\setminus \im(\sigma)$ together define the \define{unmatched points} of $\sigma$.
    We then promote a partial bijection $\sigma$ to an actual bijection via the introduction of diagonal images;
    a point $I=(b,d)\in \R^{2+}$ where neither coordinate is $\infty$ has a \define{diagonal image} $\Delta(I)=(\frac{b+d}{2},\frac{b+d}{2})$.
    With this convention, a partial bijection $\sigma:\calB\to\calB'$ becomes an actual bijection of augmented persistence diagrams $\tilde{\sigma}:\calB(\sigma)\to \calB'(\sigma)$ where
    \[
        \calB(\sigma):=\dom(\sigma) \, \cup \, \dom^c(\sigma) \, \cup \bigcup_{(I',j')\in\im^c(\sigma)} (\Delta(I');j')
    \]
    and
    \[
        \calB'(\sigma):=\im(\sigma)\, \cup\, \im^c(\sigma)\, \cup \bigcup_{(I,j)\in\dom^c(\sigma)} (\Delta(I);j). 
    \]
    The map $\tilde{\sigma}$ now matches points that were previously unmatched by $\sigma$ with their corresponding diagonal images.
    By abuse of notation, we simply write $\sigma$ for the extended map $\calB(\sigma)\to\calB'(\sigma)$. 
    If $\sigma_j(I)$ denotes the $\R^{2+}$ coordinates of $\sigma(I;j)$ and $||I-\sigma_j(I)||_p$ is the $\ell^p$ metric on $\R^{2+}$, then the $p$-\define{cost} of this extended matching $\sigma$ is
    \[
        \cost_p(\sigma)=\left(\sum_{(I,j)\in\calB(\sigma)} ||I-\sigma_j(I)||_p^p\right)^{1/p}.
    \]
    %The $\ell^{\infty}$ distance is $||I-\sigma_j(I)||_{\infty}=\max\{|b_j-b_k|,|d_j-d_k|\}$.

    For every $p\in[1,\infty]$ we define the \define{Wasserstein $p$-distance} between two diagrams $\mathcal{B}$ and $\mathcal{B}'$ is then the infimum of this matching cost over all matchings, i.e.
    \[
        W_{p}(\mathcal{B},\mathcal{B}'):=\inf_{\sigma:\mathcal{B}\to\mathcal{B}'} \cost_p(\sigma).
    \]
    We note that the Wasserstein $\infty$-distance is also called the \define{bottleneck distance}, for which we reserve the special notation
    \[
        d_B(\calB,\calB'):=W_{\infty}(\calB,\calB') = \inf_{\sigma:\calB\to\calB'} \max_{(I,j)\in\calB(\sigma)}||I-\sigma_j(I)||_{\infty}.
    \]
\end{defn}

\begin{figure}
    \centering
    \includegraphics[width=0.6\textwidth]{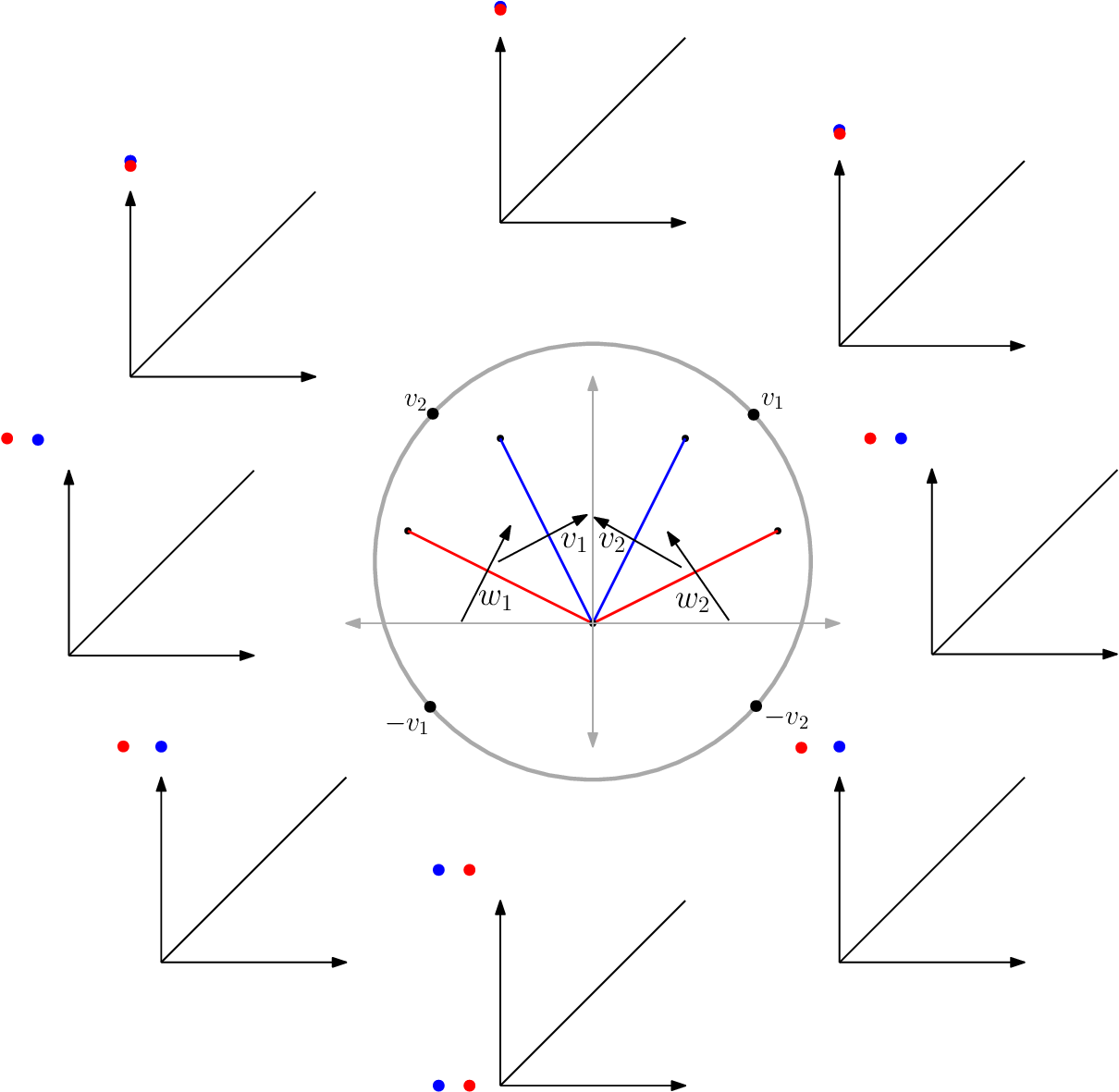}
    \caption{For an embedded shape in $\R^2$, the PHT (Definition \ref{defn:map version}) assigns to each direction $v\in \bS^1$ a persistence diagram. In this example we super-impose the PHT of two different embeddings of the letter `V', one in blue with normal directions $v_1$ and $v_2$ to the sides of the letter `V' and one in red with flatter sides with steeper normal directions $w_1$ and $w_2$. Around the circle are eight persistence diagrams corresponding to the directions $\pm e_i$ and $\pm v_i$ for $i=1,2$; $e_1=(1,0)$ points in the positive $x$ direction and $e_2=(0,1)$ points in the positive $y$ direction. Notice that the blue persistence diagram associated to direction $-e_2$ has an earlier birth time than the red persistence diagram.}
    \label{fig:pht-letter-V}
\end{figure}

We now have enough preliminaries to provide the first, classical definition of the persistent homology transform.

\begin{defn}[PHT: Map Version]\label{defn:map version}
	Let $M\in \CS(\bR^d)$ be a constructible set. The \define{persistent homology transform} of $M$ is defined as the continuous map
	\[
	\pht(M): \bS^{d-1} \to \Dgm^d \qquad v \mapsto (\PH_0(M_v),\dots,\PH_{d-1}(M_v))
	\]
    where $\Dgm$ is equipped with some Wasserstein $p$-distance. See Figure \ref{fig:top-transforms-A} and \ref{fig:pht-letter-V} for examples.
\end{defn}

The Betti Curve Transform (BCT) in degree $i$ of a shape $M$ is determined by the PHT of $M$ in degree $i$ by associating to each vector $v$ and filtration time $t$, the number of points up and to the left of $(t,t)$ in the persistence diagram $\PHT_i(M_v)$.
The Euler Characteristic Transform (ECT) of $M$ is then determined from the BCT of $M$ via the usual formula:
\[
    \ECT(M)(v,t)=\sum_{i=0}^{d-1}(-1)^i \BCT_i(M)(v,t)
\]
Given the injectivity of the ECT (Theorem \ref{thm:ECT-injects}) the following is an immediate corollary.

\begin{cor}[PHT: Injectivity, cf. \cite{cmt,Ghrist:2018aa}]
    If $\PHT(M)=\PHT(N)$, then $\BCT(M)=\BCT(N)$ and $\ECT(M)=\ECT(N)$ and hence, by Theorem \ref{thm:ECT-injects}, $M=N$.
\end{cor}

\subsection{Cohomology and Sheaf Theory}

Alongside homology is an even more powerful topological invariant known as \emph{cohomology}, which was formally introduced in 1935 along with its ring structure \cite{alexander1935chains}.
We will not go through a systematic development of cohomology, but will outline some of the key ideas and notation, as well as its historical context.
This is important because cohomology had a strong influence on the development of sheaf theory \cite{gray2006fragments}, which was initiated by Leray as a prisoner of war (POW) in Austria from 1940-1945 \cite{miller2000Leray}.

For us, the distinction between homology and cohomology is immaterial because we typically consider finite-dimensional homology over a field $\Bbbk$ and so homology and cohomology are isomorphic.
Under this perspective, the difference between homology and cohomology is purely notational: we will now write $\PHT^i(M)$ for the persistent homology transform in degree $i$, instead of $\PHT_i(M)$, to reflect the switch from homology to cohomology, even though this has no effect on the persistence diagrams assigned to each vector $v$.
As a reminder, the concepts introduced in Section \ref{sec:homology-and-BCT} have vector space duals, where one associates to a triangulation of $M$ the group of $n$-cochains $C^n(M;\Bbbk)$, which consists of linear functionals on $C_n(M;\Bbbk)$. 
The sequence of boundary operators $\partial_n:C_n\to C_{n-1}$ then dualize to define (after re-indexing) a sequence of coboundary operators $\delta^n:C^n(M;\Bbbk) \to C^{n+1}(M;\Bbbk)$, that together define the cochain complex $(C^{\bullet},\delta^{\bullet})$. 
The homology of this cochain complex then defines the cohomology groups of $M$:
\[
H^n(M;\Bbbk) := \frac{\ker \delta^n}{\im \delta^{n-1}} \qquad \text{and} \qquad H^n(M)\cong H_n^*(M)
\]
A subtle consequence of this dualization procedure is that functoriality of cohomology is reversed: a map $f:X\to Y$ induces a map $f^*:H^*(Y) \to H^*(X)$ in the opposite direction.

Leray was aware of these recent developments in cohomology theory and was particularly interested in developing a version of cohomology that did not depend on a triangulation or simplicial approximation.
He was also motivated \cite[\S2]{miller2000Leray} to develop a version of de Rham cohomology---based on differential forms---that could work for arbitrary topological spaces---and not just manifolds.
In his POW papers \cite{leray1945forms,leray1945position,leray1945equations}, Leray developed the idea of assigning to each closed subset $V$ of a topological space $X$ a cochain complex $\mathcal{F}^{\bullet}(V)$.
By considering $V=\{x\}$ for each point $x\in X$, one can then consider how the cohomology groups $H^i(\mathcal{F}^{\bullet}(x))$ vary across the space $X$. 
%in a spirit very similar to persistent homology.

It is now understood \cite{curry-thesis} that traditional persistent homology can be viewed as a special case of sheaf theory, in the sense that the filtration parameter space $\R$ indexes the (co)chain complexes of the spaces $M_{v,t}$.
Moreover, as first outlined in \cite{cmt}, sheaf theory also allows us to view the persistent homology transform (PHT) as a collection of (co)chain complexes parameterized by the entire parameter space $\bS^{d-1}\times\R$.
In this section we review this latter construction in more detail using the modern convention of sheaves defined on open sets, before moving onto the more general versions on sheaf theory---developed over the 80 years since Leray---that make our current contributions possible.
%Understanding how (co)homology of a space varies with reference to a map to a base space was Leray's motivation behind the development of sheaf theory, but it quickly became adapted for more general purposes over the intervening 80 years.

\begin{defn}[Pre-Sheaves and Stalks]
    Let $\bX$ be a topological space and let $\Open(\bX)$ be the poset of open sets in $\bX$.
    A \define{pre-sheaf} of vector spaces on $\bX$ is a functor $\F:\Open(\bX)^{op}\to \Vect$.
    We sometimes write $\rho_{U,V}:\F(U)\to \F(V)$ for the restriction map associated to the inclusion $V\subseteq U$.
    The \define{stalk} of a pre-sheaf at a point $x\in \bX$ is then defined as the direct limit\footnote{The direct limit or injective limit are all examples of colimits. See \cite[\S3]{riehl2017category} for a good introduction to limits and colimits.} of $\F$ over all open sets containing $x$, i.e.,
    \[
        \F_x := \varinjlim_{U\ni x} \F(U) 
    \]
    One can think of the stalk at $x$ as the ``local picture'' of $\F$ at $x$. This can be constructed rigorously as the vector space of equivalence classes under restriction, i.e., $u\in F(U)$ and $v\in F(V)$ are equivalent if $\exists W\subseteq U\cap V$ with $x\in W$ and $\rho_{U,W}(u)=\rho_{V,W}(v)$. 
\end{defn}

\begin{defn}[\v{C}ech Cohomology of a Cover]
    Given a pre-sheaf $\F$ and an open cover $\cU=\{U_i\}_{i\in \Lambda}$ of an open set $U$, one has the \define{\v{C}ech cochain complex} associated to $\cU$ where the $n$-cochains are given by the product over intersections of $n+1$ cover elements, i.e.,
    \[
        \check{C}^n(\cU;\F)=\prod_{i_0,\ldots,i_n \in \Lambda} \F(U_{i_0,\dots,i_n})
    \]
    where we always assume $\F(\varnothing)=0$.
    The $n^{th}$-coboundary operator is defined by specifying the contribution of a general element $s\in \check{C}^n(\cU;\F)$ to the factor $i_0,\ldots,i_{n+1}$ in $\check{C}^{n+1}(\cU;\F)$. This is given by the formula
    \[
        (\delta^n s)_{i_0,\ldots,i_{n+1}} = \sum_{j=0}^{n+1} (-1)^j s_{i_0,\dots,\hat{i_j},\dots,i_{n+1}} |_{U_{i_0,\ldots,i_{n+1}}},
    \]
    where $\hat{i_j}$ denotes removal of that entry and the vertical line is the commonly accepted short hand for the application of the restriction map from $U_{i_0,\dots,\hat{i_j},\dots,i_{n+1}}$ to $U_{i_0,\ldots,i_{n+1}}$.
\end{defn}

\begin{defn}[Sheaves]\label{defn:sheaf}
    A pre-sheaf of vector spaces $\F$ is a \define{sheaf} if for every open set $U$ and open cover $\cU=\{U_i\}_{i\in \Lambda}$ we have that the value of $\F$ on $U$ can be computed using the \v{C}ech cohomology of the cover:
    \[
        \F(U) \cong H^0\bigg[ \check{C}^0(\cU;\F) \to \check{C}^1(\cU;\F) \to \check{C}^2(\cU;\F) \to \cdots \bigg]
    \]
    This is a \emph{local-to-global principle}, because it guarantees that a sheaf is always determined locally by a cover.
    More generally, one can define sheaves valued in categories that are not necessarily abelian, such as $\Set$, but where the categorical notion of an (inverse) limit makes sense. 
    As a reminder, the limit\footnote{Kernels, equalizers, and pullbacks are all examples of limits. See \cite[\S3]{riehl2017category} for more background.} is a categorical operation that takes an entire diagram of objects---in this case the objects $\F(U_i)$---and produces a single object that maps to each of the objects and commutes with the morphisms connecting the objects---in this case the restriction maps $\F(U_i) \rightarrow \F(U_{ij})\leftarrow \F(U_j)$.
    One then modifies the sheaf axiom to instead require that
    \[
    \mathcal{F}(U) \cong \varprojlim  \bigg[ \prod_{i}\mathcal{F}(U_i) \rightrightarrows \prod_{i,j}\mathcal{F}(U_{ij}) \rightthreearrow{} \cdots   \bigg].
    \]
    It is well-known that the limit only depends on the objects $\{\F(U_i)\}$ associated to the cover elements along with the objects associated to the pairwise-intersections $\{\F(U_{ij})\}$, in similar spirit to how $H^0$ only depends on the nodes and edges of a triangulated space. Higher order information, such as intersections of three or more elements, is ignored by the usual sheaf axiom.
\end{defn}

\begin{defn}
Let $\Dat$ be a complete ``data'' category, i.e., all limits in $\Dat$ exist. 
Denote the category of pre-sheaves and sheaves on $\bX$ valued in $\Dat$ by $\PShv(\bX;\Dat)$ and $\Shv(\bX;\Dat)$, respectively.
Since every sheaf is a pre-sheaf there is a natural inclusion of categories 
\[
\pshf: \Shv(\bX;\Dat) \hookrightarrow \PShv(\bX;\Dat).
\]
Under suitable hypotheses\footnote{See condition (17.4.1) in \cite{kashiwara}.} on $\Dat$, the $\pshf$ has an ``inverse''\footnote{i.e.~a left adjoint, see \cite[\S4]{riehl2017category}.} called \define{sheafification} 
\[
\shff:\PShv(\bX;\Dat) \to \Shv(\bX;\Dat).
\]
Typically we set $\Dat=\Vect$ and let $\Shv(\bX)$ denote the category of sheaves of vector spaces.
\end{defn}

\begin{exmp}[Constant and Locally Constant Sheaves]\label{exmp:constant-sheaf}
    The pre-sheaf that assigns the vector space $\Bbbk$ to every open set is not a sheaf because the value that should be associated to an open set $U=U_1 \cup U_2$ with two components should be $\Bbbk^2$.
    The sheafification of this constant pre-sheaf is called the \define{constant sheaf} and is usually written $\Bbbk$ as well.
    One can replace $\Bbbk$ with any vector space $\mathbb{V}$ and obtain an analogous constant sheaf.
    Moreover, if a sheaf $\F\in \Shv(\bX)$ has the property that for each point $x\in \bX$ there is a neighborhood $U_x$ so that $\F$ restricts to a constant sheaf on $U_x$, then one calls $\F$ a \define{locally constant sheaf} or \define{local system}.
\end{exmp}

\begin{rmk}[Preservation of Stalks]\label{rmk:stalks-shff}
    It is a fact that sheafification preserves stalks, so the ``local picture'' of a pre-sheaf $\F$ is unchanged by this process and only the failure of the local-to-global principle is repaired.
\end{rmk}

\begin{defn}[Pushforward and Pullback of Sheaves]\label{defn:pushforward-pullback}
    Suppose $f:\bX \to \bY$ is a continuous map of spaces, $\F$ is a sheaf on $\bX$, and $\calG$ is a sheaf on $\bY$.
    The \define{pushforward} (or direct image) of $\F$ along $f$, written $f_* \F$ is the sheaf that assigns to each open set $V\in \Open(\bY)$ the value $\F(f^{-1}V)$.
    Dually, the \define{pullback} (or inverse image) of $\calG$ along $f$, written $f^* \calG$, is the sheaf associated to the pre-sheaf that assigns to every open set $U\in \Open(X)$ the ``stalk at $f(U)$'':
    \[
        \shff \big[U \mapsto \varinjlim_{V\supseteq f(U)} \calG(V) \big]
    \]
    These define functors
    \[
    f_*: \Shv(\bX)\to \Shv(\bY) \qquad \text{and} \qquad f^*:\Shv(\bY)\to\Shv(\bX).
    \]
\end{defn}

We now isolate a very important class of pushforward sheaves, which we call Leray sheaves.

\begin{defn}[Leray Sheaves]\label{defn:Leray-sheaves}
Suppose $f:\bY \to \bX$ is a proper continuous map of spaces, then the $i^{th}$ \define{Leray sheaf of $f$}, written $R^if_*\Bbbk$, is the sheaf associated to the pre-sheaf
\[
\shff \big[U \mapsto H^i(f^{-1}U;\Bbbk)\big].
\]
For $i=0$ this is just the pushforward of the constant sheaf, but for $i\geq 1$ this definition arises naturally from the ``derived'' perspective considered in Section \ref{sec:derived-sheaf-theory}.
\end{defn}

\begin{rmk}[Cohomology of the Fiber]\label{rmk:fiber}
    As Remark \ref{rmk:stalks-shff} indicates, the stalk of the Leray pre-sheaf is preserved under sheafification. Consequently, if $\varinjlim_{U\ni x} H^{i}(f^{-1}U))\cong H^i(f^{-1}(x))$, then this sheaf records the cohomology of the fiber.
    This last consequence follows from properness of $f$, which plays a key role in the proper base change theorem \cite{iverson}.
\end{rmk}

We now have enough language to describe the PHT sheaf-theoretically.

\begin{defn}[PHT: Sheaf Version, cf.~\cite{cmt}]\label{defn:pht-sheaf}
Let $M\in \CS(\R^d)$ be a constructible set.
Associated to $M$ is the \define{auxiliary total space}
\[
Z_M := \{(x,v,t) \in M\times \bS^{d-1}\times\R \mid x\cdot v \leq t\}.
\]
The $i^{th}$ \define{persistent homology transform sheaf} of $M$, written $\PHT^i(M)$, is the $i^{th}$ Leray sheaf of the map $f_M:Z_M \to \bS^{d-1}\times\R$ that projects onto the last two factors.
Since $M$ is compact and $f_M$ is a projection, we see that $f_M$ is proper. By Remark \ref{rmk:fiber}, the stalk of the $i^{th}$ Leray sheaf at $(v,t)\in \bS^{d-1}\times \bR $ is isomorphic to the $i^{th}$ cohomology of the fiber i.e., $H^i({f_M}^{-1}(v,t))$. 
See Figure~\ref{fig:vshape} for a stalk-wise picture of $\PHT^0$ of the shape `V', which was considered from the map perspective in Figure \ref{fig:pht-letter-V}.
\end{defn}

\begin{figure}
     \centering
     \begin{subfigure}[b]{0.4\textwidth}
         \centering
         \includegraphics[width=0.6\textwidth]{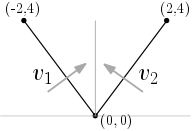}
         \caption{$M$ is an embedded `V' shape.}
         \label{fig:V}
     \end{subfigure}
     
     \bigskip
     % \hspace{-1cm}
     \begin{subfigure}[b]{0.45\textwidth}
         \centering
        \includegraphics[width=1.1\textwidth]{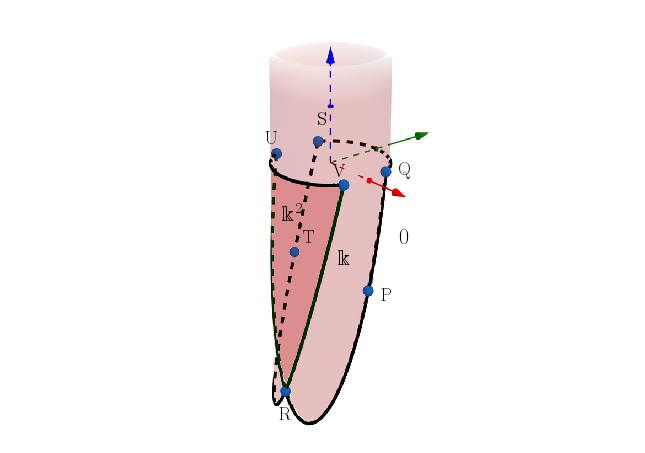}
         \caption{$\PHT^0(M)$}
         \label{fig:cylV}
     \end{subfigure}
    %\hfill
     \begin{subfigure}[b]{0.45\textwidth}
         \centering
         \includegraphics[width=.8\textwidth]{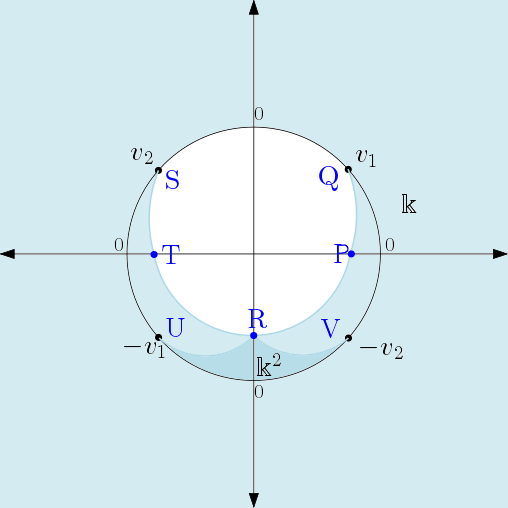}
         \caption{$\pht^0(M)$ viewed on the plane.}
         \label{fig:phtv}
     \end{subfigure}
        \caption{Figure~(B) is a visualization of the PHT sheaf in degree 0 of $M=$ `V' on the cylinder $\bS^1\times \bR$. The dark pink region represents where there are two connected components and the light pink region represents one connected components. The green arrow represents the direction $e_2=(0,1)$ and the the red axis corresponds looking to the right $e_1=(1,0)$.
        To visualize the sheaf on the plane, as in Figure~(C), we map the cylinder $\bS^1\times \bR$ to the plane $\bR^2$ by sending the circle at $t=-\infty$ to $(0,0)$ and the circle at $t=+\infty$ goes to infinity in each direction.} 
        %Mores specifically: fix direction $v \in \bS^1$ and then map $\{v\}\times \bR$ onto $(0,\infty)$. So every direction $v$ has a ray attached to it.}  
        %For example, the direction $e_2 = (0,1)$ at value $t=0$ then in Figure (B) we see $H^0(M_{e_2,0},\Bbbk)= \Bbbk$ since $M_{e_2,0}= \{ x\in M \mid x\cdot e_2 \le 0 \} =\{(0,0)\} $.}
    \label{fig:vshape}
\end{figure}

\begin{rmk}
If we restrict the sheaf $\pht^i(M)$ to the subspace $\{v\}\times \bR$, then one obtains a constructible sheaf that is equivalent to the persistent (co)homology of the filtration of $M$ viewed in the direction of $v$. The persistence diagram in degree $i$ is simply the expression of this restricted sheaf in terms of a direct sum of indecomposable sheaves.
\end{rmk}

\begin{rmk}[Iterated Pushforwards and the Equivariance Property]\label{rmk:equivariance}
    An advantage of the sheaf-theoretic definition of the PHT is that there are further operations that can be performed on it. 
    For example, one can iterate the Leray sheaf construction for any change of coordinates of $\bS^{d-1}\times \R$.
    For example, a rigid motion $g\in \mathrm{SO}(d)$, induces an action on the corresponding PHT sheaves $\PHT^i(M)$ via pushforward along $g$. In particular, we have the following equivariance formula:  
    \begin{equation*}
        \PHT^i(g \cdot M) = g_* \PHT^i(M)
    \end{equation*}
    To prove the above formula, it suffices to show for any test open $U\subseteq\mathbb{S}^{d-1}\times \mathbb{R}$, 
    $$ f^{-1}_{gM}(U) = f_M^{-1}\left(g^{-1}(U)\right). $$ The equality of the sets follows from the definition of the projection map $f_M$ (Definition~\ref{defn:pht-sheaf}) and the fact that $gx\cdot v = x\cdot g^{-1}v $ for any $x\in M$ and $v\in S^{d-1}$.  In other words, filtering a shape rotated by $g$ in direction $v$ is equivalent to filtering the original shape in direction $g\cdot v$.
    A similar expression can be shown for translations. In particular, if $g$ represents translation by $T\in \bR^{d}$. Then it induces a map $g:\bS^{d-1}\times \bR\to \bS^{d-1}\times \bR$ by $(v,t) \mapsto (v,t+T\cdot v)$.
\end{rmk}

\subsection{Derived Sheaf Theory}\label{sec:derived-sheaf-theory}

There is a third way of describing the persistent homology transform that requires the language of derived categories. 
In many ways this third perspective follows naturally the history of cohomology and sheaf theory.
Cochain complexes of vector spaces generalize naturally to cochain complexes of sheaves, which motivated \cite{weibel-history} the development of a unified theory that could handle cohomology and the related algebra of all these different situations.
This led to the development of homological algebra, which was systematically laid out by Cartan and Eilenberg in 1956 \cite{cartan-eilenberg}.
This was quickly followed by Grothendieck's famous \emph{Tohoku} paper \cite{tohoku}, which introduced the general notion of an abelian category, where the concepts of (co)homology make sense.
Grothendieck and his student, Verdier, then laid the groundwork for derived category theory, which permit certain dualities and where quasi-isomorphisms (isomorphisms on cohomology) are genuine isomorphisms.
Verdier's 1967 thesis, published later in two installments \cite{verdier1977} and \cite{verdier1996}, crystallized the algebraic aspects of sheaf theory for nearly 50 years as it became the standard treatment used by textbooks, e.g., \cite{kashiwara}, to this day.
However, our efforts to ``glue'' PHTs---carried out in Section \ref{sec:htpy-sheaf}---also illustrate the deficiencies of the derived category, cf. \cite[p.~1261]{hormann2017}, and motivate \cite{hormann2019} the $\infty$-category perspective outlined in our Appendix \ref{app:infinity}.

We now recall one definition of the derived category, as there are many. 

\begin{defn}
Let $\Acat$ be an abelian category, in particular every morphism has a kernel and cokernel.
Consider the category of bounded chain complexes of objects in $\Acat$, written $\cC^b(\Acat)$.
Associated to this category is the \define{homotopy category} $\cK^b(\Acat)$ of chain complexes, which has the same objects as $\cC^b(\Acat)$, but where morphisms are homotopy classes of chain maps.
Recall that a chain map $\varphi:(A^{\bullet},d_A) \to (B^{\bullet},d_B)$ is a \define{quasi-isomorphism} if it induces isomorphisms on all cohomology groups
\[
    H^i(\varphi):H^i(A^{\bullet}) \to H^i(B^{\bullet}).
\]
Let $\cQ$ denote the class of quasi-isomorphisms.
The \define{bounded derived category} of $\Acat$ is the localization of $\cK^b(\Acat)$ at the collection of morphisms $\cQ$, i.e.
\[
\cD^b(\Acat):=\cK^b(\Acat)[\cQ^{-1}].
\]
\end{defn}

\begin{rmk}\label{rmk:htpy-inj}
An alternative definition of the derived category makes use of the assumption that $\Acat$ has enough injectives, i.e. every object in $\Acat$ has an injective resolution or, said differently, every object in $\Acat$ is quasi-isomorphic to a complex of injective objects. 
Under this assumption, the derived category of $\Acat$ is equivalently defined as the homotopy category of injective objects in $\Acat$, i.e.
\[
\cD^b(\Acat) \simeq \cK^b(\text{Inj}-\Acat).
\]
\end{rmk}

Remark \ref{rmk:htpy-inj} provides an easier to understand prescription for working with the derived category. One simply takes an object, e.g.~a sheaf, replaces it with it's injective resolution and works with the resolution instead.

\begin{defn}
Suppose $F:\Acat \to \Bcat$ is an additive and left-exact functor, i.e.~it commutes with direct sums and preserves kernels, then the \define{total right derived functor} of $F$, written $RF:\cD^b(\Acat) \to \cD^b(\Bcat)$ is defined by
\[
    RF(A^{\bullet}):=F(I^{\bullet})
\]
for $I^{\bullet}$ an injective resolution of $A^{\bullet}$. In general, one can substitute $I^{\bullet}$ with any $F$-acyclic resolution of $A^{\bullet}$.
Such resolutions are said to be \define{adapted} to $F$.
\end{defn}

We can now define the persistent homology transform as a derived sheaf.

\begin{defn}[PHT: Derived Version, cf.~\cite{cmt}]\label{defn:derived-PHT-sheaf}
Let $M\in\CS(\R^d)$ be a constructible set. 
Let $Z_M$ be the auxiliary space construction from Definition \ref{defn:pht-sheaf}.
Let $f_{M*}:\Shv(Z_M) \to \Shv(\bS^{d-1}\times\R)$ be the pushforward (or direct image) functor along the projection map $f_M:Z_M \to\bS^{d-1}\times\R $.
The \define{derived PHT sheaf} is
\[
    \PHT(M):= Rf_{M*}\Bbbk_{Z_M} \in \cD^b(\Shv(\bS^{d-1}\times\R)).
\]
More explicitly we can describe this right-derived pushforward as follows:
For a topological space $\bX$ we let $\mathcal{S}^p(U;\bk)$ denote the group of singular $p$-cochains of $U\subset \bX$ with coefficients in $ \bk $. 
Define $\mathscr{S}^p(X,\bk)= \shff(U\mapsto \mathcal{S}^p(U;\bk))$ where $\shff$ stands for sheafification.
The constant sheaf $\bk_{Z_{M}}$ admits a flabby resolution by singular cochains:
	\begin{equation*}
	0\to \bk_{Z_{M}}\to \mathscr{S}^0(Z_{M};\bk)\to \mathscr{S}^1(Z_{M};\bk) \to \mathscr{S}^2(Z_{M};\bk)\to \cdots
	\end{equation*} 
Because flabby resolutions form an adapted class for the pushforward functor \cite{bredon} we can describe $\PHT(M)$ as the pushforward of the complex of sheaves of singular cochains:
	\begin{equation*} %\label{flabby resolution of constant sheaf}
	Rf_{M*}\bk_{Z_{M}} := f_{M*}\mathscr{S}^0(Z_{M_i};\bk)\to f_{M*}\mathscr{S}^1(Z_{M_i};\bk) \to \cdots
	\end{equation*}
 Taking $i^{th}$ cohomology of this complex---written $\mathcal{H}^i$ in the category of sheaves---produces the $i^{th}$ PHT sheaf of Definition~\ref{defn:pht-sheaf}.
\end{defn}

\subsection{Constructible Sheaves and their Functions}\label{sec:sheaf-to-function}

We finish our review of preliminary material by showing how our first two topological transforms---the Euler characteristic and Betti curve transforms---are recovered by the derived perspective.
This is accomplished via the sheaf-to-function correspondence, which is best studied for constructible sheaves.

\begin{defn}[Constructible and Cellular Sheaves]
A sheaf $\F\in \Shv(\bX)$ is said to be \define{constructible} if there is a decomposition of $\bX$ into definable subsets $\{\bX_{\alpha}\}$ so that for each $\alpha$ the pullback of $\F$ along the inclusion $i_{\alpha}:\bX_{\alpha}\hookrightarrow \bX$ produces a locally constant sheaf $i^*_{\alpha} \F\in \Shv(\bX_{\alpha})$, cf. Example \ref{exmp:constant-sheaf}.
When the subsets $\{\bX_{\alpha}\}$ are cells in a triangulation of $\bX$, we can equivalently express $\F$ as a \define{cellular sheaf}, which simply assigns a vector space to each cell $\bX_{\alpha}$ and a linear map to each pair $\bX_{\alpha}\subseteq \overline{\bX_{\beta}}$; see \cite{shepard1985cellular} for the first description of cellular sheaves and \cite{curry-thesis} for a modern treatment.

In similar spirit, if $\mathcal{F}^\bullet$ is a complex of sheaves, e.g., a derived sheaf, then it is said to be \define{cohomologically constructible} if the cohomology sheaves $\mathcal{H}^i\mathcal{F}^\bullet$ are constructible for every $i$.
A derived cellular sheaf then associates to each cell in a cellulation a complex of vector spaces and a chain map for every face-relation pair $\bX_{\alpha}\subseteq \overline{\bX_{\beta}}$. 
\end{defn}

We now show how to associate to a constructible function to a constructible sheaf.

\begin{defn}[Euler-Poincar\'e and Hilbert Functions]
Let $\mathcal{F}^\bullet$ be a complex of cohomologically constructible sheaves on $\bX$. 
The \define{local Euler-Poincar\'e index} is the piecewise constant integer-valued function defined by
  \begin{equation*}
      h(x):= \chi(\mathcal{F}^\bullet)(x) = \sum_i (-1)^{i} \dim \mathcal{H}^i(\mathcal{F}^{\bullet})_x = \sum_i (-1)^{i} \dim (H^i\mathcal{F}^{\bullet}_x)
  \end{equation*}
The first equality considers the stalks of the cohomology sheaves of $\F^{\bullet}$ and the second equality considers the cohomology of the stalk complex. By Remark \ref{rmk:stalks-shff} these are isomorphic and thus have the same dimension and Euler characteristic.
In the simplest setting---constructible sheaves that are concentrated in a single cohomological degree---the local Euler-Poincar\'e index is just the \define{Hilbert function}---it records the dimension of the stalks of a sheaf.
\end{defn}

% For complexes of sheaves, the dimension function is replaced by the point-wise Euler characteristic of the complex.
% This index was used by Kashiwara to prove that the Grothendieck group of constructible sheaves is isomorphic to the group of constructible functions on $X$ \cite[Thm 9.7.1]{kashiwara}.
 
 % \begin{exmp}
 %  The function associated to a single sheaf $\mathcal{F}$ is $ h(x) = \dim(\mathcal{F}_x)$. This can be seen by the Euler-Poincar\'e index of \[\mathcal{F} \to 0 \to 0 \to \cdots\]
 % \end{exmp}
 % A sheaf $\mathcal{F}$ can thought of a complex of sheaves, 
  %\[\mathcal{F} \to 0 \to 0 \to \cdots\] and so the constructible function turns out to be $ h(x) = \dim(\mathcal{F}_x)$.
  
For a constructible set $M$, the derived PHT sheaf $\pht(M)= R(f_M)_*\Bbbk_{Z_M}$ is constructible \cite{cmt} and so by the above correspondence there is a constructible function associated to it. 
%This function is the Euler Characteristic Transform (ECT) \cite{cmt}.
   
   % Recall the derived PHT sheaf, (Equation~(\ref{flabby resolution of constant sheaf}))
   % \[	Rf_{M*}\bk_{Z_{M}} := f_{M*}\mathscr{S}^0(Z_{M_i};\bk)\to f_{M*}\mathscr{S}^1(Z_{M_i};\bk) \to \cdots
   % \]
   % The function given by the local Euler-Poincar\'e index of the above equation is
   % \[  \chi(\pht(M))(v,t) = \sum_i (-1)^i \dim H^i\big(f_M^{-1}(v,t);\Bbbk\big)= \chi(f_M^{-1}(v,t)).  \]
    
  \begin{defn}[ECT and BCT: Sheaf-to-Function Version]
    The ECT of a constructible set $M$ is the local Euler-Poincar\'e index of the derived PHT sheaf of Definition \ref{defn:derived-PHT-sheaf}:
    \[ \text{ECT}(M)(v,t) = \chi\big(R^{\bullet} f_{M*} \Bbbk_{Z_M}\big)(v,t) 
    \]
    Similarly, the BCT in each degree $i$ is just the Hilbert function of the $i^{th}$ PHT sheaf of Definition \ref{defn:pht-sheaf}.
  \end{defn}

\section{A Homotopy Sheaf on Shape Space}\label{sec:htpy-sheaf}

As mentioned in the introduction, we want to build a shape space using a sheaf-theoretic construction on the poset of constructible sets $\CS(\R^d)$.
Naively one would like to prove that the association
    \[
	\F:\CS(\bR^d)^{op} \to \D^b(\Shv(\bS^{d-1}\times \bR)) \qquad
	M \mapsto  \PHT(M)
	\]
is a sheaf, but there are two main obstacles.

The first obstacle is that a topology on $\CS(\R^d)$ needs to be specified.
Although sheaves on posets are well-defined via the Alexandrov topology---see \cite{curry-thesis} for a modern treatment---the poset under consideration is infinite and using the Alexandrov topology here would imply that a shape can be determined via a cover by its points; this is clearly impossible as there is not enough of an interface between points to determine homology.
This first obstacle is neatly handled by restricting the types of covers we're willing to consider---finite closed covers, to be precise---and is formalized using Grothendieck topologies and sites (Definition \ref{defn:site}).
However, the second obstacle is far more subtle and requires a departure from the usual sheaf axiom of Definition \ref{defn:sheaf}.
To see why, we show that a candidate ``local-to-global'' principle for the Euler characteristic transform is simply the inclusion-exclusion principle (Theorem \ref{thm:decat}).
This in turn motivates the need for a sheaf axiom that considers higher-order intersections, which we call the \emph{homotopy} sheaf axiom or \v{C}ech descent (Definition \ref{def:homotopysheaf}).
This then implies our main gluing result for the PHT (Theorem \ref{thm:main-hpty-sheaf}).

\subsection{Inclusion-Exclusion for the ECT}

As a reminder, Definition \ref{defn:radon-ECT} presented the Euler characteristic transform as the Radon transform $\mathcal{R}_S$ associated to a particular relation $S\subseteq X\times Y$, where $X=\R^d$ and $Y=\bS^{d-1}\times \R$.
One of the stated properties of the Radon transform is that it defines a group homomorphism
\[
\mathcal{R}_S : \CF(X) \to \CF(Y) \qquad \text{i.e.} \qquad \mathcal{R}_S(\phi +\psi)=\mathcal{R}_S(\phi)+\mathcal{R}_S(\psi).
\]
This allows us to prove an immediate inclusion-exclusion principle for the ECT.

  \begin{thm}\label{thm:decat}
      For a finite cover $\mathcal{M}=\{M_i\}_{i\in \Lambda}$ of $M \subset \mathbb{R}^d$ by constructible subsets 
       \[ 
       \ECT(M) = \sum_{I\subset \Lambda}(-1)^{|I|+1}\ECT(M_I)  
       \]
       where each $M_I$ denotes the intersection $M_{i_1}\cap M_{i_2} \cap \cdots \cap M_{i_k}$ for $I=(i_1,...,i_k)$. 
  \end{thm}
  \begin{proof}
    The inclusion-exclusion principle allows us to write the indicator function as
    \begin{equation*}
        \mathbb{1}_M = \sum_{I\subset \Lambda}(-1)^{|I|+1}\mathbb{1}_{M_I}.
    \end{equation*}
    %which is exactly akin to the local Euler-Poincar\'e index of Godemont's resolution from Equation \ref{godemont-resolution}.
    Linearity of the Radon transform then implies that
    \begin{equation*}
        \mathcal{R}_{S}\mathbb{1}_M = \sum_{I\subset \Lambda}(-1)^{|I|+1}\mathcal{R}_{S}\mathbb{1}_{M_I},
    \end{equation*}
    which is the expression using ECTs written above.
    %This is exactly the local Euler-Poincar\'e index of the pushforward of the resolution in Equation \ref{godemont-resolution}.
    %Checking on stalks reveals that for any $(v,t)\in \bS^{d-1}\times \bR$
    %\[ \chi\big(f^{-1}_M(v,t)\big) = \sum_{I\subset \Lambda}(-1)^{|I|+1}\chi\big(f^{-1}_{M_I}(v,t)\big).  \]
  \end{proof}

  \begin{rmk}\label{rmk:decat}
      If we take the sheaf-to-function correspondence of Section \ref{sec:sheaf-to-function} seriously, then Theorem \ref{thm:decat} should be viewed as a decategorification of a deeper sheaf-theoretic result:
      \begin{equation*}
          \PHT(M) \cong \text{``summary of'' } \bigg[ \prod \PHT(M_i) \to \prod \PHT(M_{ij}) \to \prod \PHT(M_{ijk}) \to \cdots \bigg]
      \end{equation*}
      However, unlike the sheaf axiom of Definition \ref{defn:sheaf}, this summary operation cannot only use the $M_i$ and their pairwise intersections $M_{ij}$.
      Indeed the inclusion-exclusion result of Theorem \ref{thm:decat} says that \emph{all higher order terms must be considered}.
      This summary operation is accomplished by the homotopy limits of Definition \ref{defn:homotopy-limit} or via limits in the derived $\infty$-category (Appendix \ref{app:infinity}).
  \end{rmk}

%This obstacle is neatly handled by imposing the condition that only finite closed covers can participate in a valid covering of a shape.
% The second obstacle is fatal for a naive sheaf-theoretic approach: the pre-sheaf $U\mapsto H^i(U)$ is not a sheaf for $i\geq 1$.
% Indeed, the connecting homomorphism in the Mayer-Vietoris long-exact sequence quantifies precisely the failure of the sheaf axiom.
% Both of these obstacles are addressed via tools from ``higher'' sheaf theory: Grothendieck topologies, homotopy sheaves, and $\infty$-sheaves (presented in Appendix \ref{app:infinity}).
% We develop the first two parts of this machinery now.

\subsection{Sites and Homotopy Sheaves}
Grothendieck topologies provide a way of generalizing sheaves to contravariant functors on a general category $\cC$, like the poset $\CS(\R^d)$. 
Covers of an open set are replaced with collections of morphisms that have certain ``cover-like'' properties.

\begin{defn}[Grothendieck Pre-topology, cf.\cite{artin}]\label{defn:site}
		Let $\cC$ be a category with pullbacks.
		A \define{basis for a Grothendieck topology} (or a \define{pre-topology}) on $\mathcal{C}$ requires specifying for each object $U\in\cC$ a collections of \define{admissible covers} of $U$.
		This collection of covers must be closed under the following operations:
		\begin{enumerate}
			\item (Isomorphism) If $f:U'\to U$ is an isomorphism then $\{f:U' \to U\}$ is a cover.
			\item (Composition) If $\{f_i: U_i\to U\}$ is a cover of $U$ and if for each $i$ we have a cover $\{g_{i,j}:U_{i,j} \to U_{i}\}$ then the composition $\{f_i\circ g_{i,j}: U_{i,j}\to U\}$ is also a cover.
			\item (Base Change) If $\{f_i:U_i \to U\}$ is a cover and $V\to U$ is any morphism then the pullback $\{\pi_i: V\times_U U_i \to V \} $ is a cover as well 
		\end{enumerate}
		As the name suggests, the above data specifies a genuine Grothendieck topology on $\cC$.
		A category equipped with a Grothendieck topology is known as a \define{site}. 
\end{defn}

\begin{rmk}[Sheaves on Sites]\label{rmk:sheaves-on-sites}
The classical definition of a pre-sheaf and sheaf can now be generalized to a site. 
A functor $\cF:\cC^{op}\to \Dat$ is a \define{pre-sheaf}.
If $\Dat$ has all limits, we say a pre-sheaf is a \define{sheaf} if for every object $U\in \cC$ and cover $\cU=\{f_i: U_{i} \to U\}$
 \[
 \cF(U) = \varprojlim \bigg[ \prod_{i}\mathcal{F}(U_i) \rightrightarrows \prod_{i,j}\mathcal{F}(U_{ij})\bigg].
 \]
Here equality means isomorphic up to a unique isomorphism and $U_{ij}$ is the pullback of $f_j: U_j\to U$ along $f_i:U_i \to U$ for any pair of morphisms $f_i$ and $f_j$ that participate in the cover $\cU$.
\end{rmk}

Unfortunately the functor $\cF$ specified in Theorem \ref{thm:htpy-sheaf} is valued in the derived category of sheaves on $\bS^{d-1}\times \R$.
It is well-known among experts that the derived category $\cD^b(\Acat)$ of an abelian category $\Acat$ is not abelian.
Candidate kernels and co-kernels do not have canonical inclusion and projection maps, but one can work with so-called \emph{distinguished triangles} instead.
More generally, we can describe a sheaf axiom whenever the notion of a \emph{homotopy limit} makes sense in the target category $\Dat$.
We recall a special case of this construction for $\Dat=\cD^b(\Acat)$.

\begin{defn}[Homotopy Limits]\label{defn:homotopy-limit}
Given an inverse system of objects $(K_n,f_n)$ in $\cD^b(\Acat)$
\[
\cdots \xrightarrow{f_{n-1}} K_{n-1} \xrightarrow{f_n} K_{n} \xrightarrow{f_{n+1}} \cdots
\]
an object $K$ is a \define{homotopy limit} if there is a distingushed triangle in the derived category
\[
K \to \prod_{n}K_n \xrightarrow{\text{shift}} \prod_n K_n \to K[1].  
\]
The shift map being given by $(k_n) \mapsto (k_n - f_{n-1}(k_{n-1}))$.
We note that the homotopy limit is not necessarily unique and so we say that $S$ is \emph{a} homotopy limit rather than it is \emph{the} homotopy limit.
\end{defn}

We can now define sheaves valued in the derived category.

\begin{defn}[Homotopy Sheaf]\label{def:homotopysheaf}
        A pre-sheaf $\cF:\cC^{op}\to\cD^b(\Acat)$ is a \define{homotopy sheaf} (or satisfies \define{\v{C}ech descent} \cite{dugger-simp}) if for every object $U\in\cC$ and cover $\cU=\{U_i \to U\}$ the following map is a quasi-isomorphism:
        \[
        \mathcal{F}(U) \xrightarrow{\simeq} \holim  \bigg[ \prod_{i}\mathcal{F}(U_i) \rightrightarrows \prod_{i,j}\mathcal{F}(U_{ij}) \rightthreearrow{} \cdots   \bigg]
        \]
\end{defn}

\subsection{Gluing Results for the PHT}

We can now prove our main results.

\begin{lem}\label{lem:site}
The poset $\CS(\R^d)$ admits the structure of a site.
\end{lem}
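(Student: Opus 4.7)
The plan is to put the obvious Grothendieck pre-topology on $\CS(\R^d)$: declare an admissible cover of $M\in\CS(\R^d)$ to be a \emph{finite} collection of inclusions $\{M_i\hookrightarrow M\}_{i\in\Lambda}$ with each $M_i\in\CS(\R^d)$ and $\bigcup_i M_i=M$. Since the category structure on $\CS(\R^d)$ is that of a poset under inclusion, any two morphisms into a common target are automatically compatible, so the bulk of the work is verifying (i) that the relevant categorical pullbacks exist inside $\CS(\R^d)$ and (ii) that the three axioms of a Grothendieck pre-topology hold.

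First I would recall that in a poset (viewed as a category), pullbacks are meets: for $M_i\hookrightarrow M$ and $M_j\hookrightarrow M$ the pullback is $M_i\cap M_j$. Thus to show $\CS(\R^d)$ has pullbacks I need only check that the intersection of two constructible subsets of $\R^d$ is constructible. This is immediate from the definition: an o-minimal structure $\mathcal{O}_d$ is a Boolean algebra, so it is closed under finite intersections, and compactness is preserved under finite intersections of closed sets (constructible sets are compact and, in the setting used here, definable, hence closed). Iterating, all finite $k$-fold intersections $M_I=\bigcap_{i\in I}M_i$ appearing in the cover are again in $\CS(\R^d)$.

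Next I would verify the three pre-topology axioms. Isomorphism: the only isomorphisms in a poset are identities, and $\{\id:M\to M\}$ evidently covers $M$. Composition: given a cover $\{M_i\hookrightarrow M\}_{i\in\Lambda}$ and, for each $i$, a cover $\{M_{i,j}\hookrightarrow M_i\}_{j\in\Lambda_i}$ with $\bigcup_j M_{i,j}=M_i$, the composed family $\{M_{i,j}\hookrightarrow M\}_{(i,j)}$ is a finite collection of constructible subsets with $\bigcup_{i,j}M_{i,j}=\bigcup_i M_i=M$, so it is a cover. Base change: given a cover $\{M_i\hookrightarrow M\}$ and any inclusion $N\hookrightarrow M$ with $N\in\CS(\R^d)$, the pullbacks $N\times_M M_i=N\cap M_i$ are constructible by the Boolean algebra property, and $\bigcup_i(N\cap M_i)=N\cap\bigcup_i M_i=N$, so $\{N\cap M_i\hookrightarrow N\}$ is a cover of $N$.

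None of these steps is really hard; the only conceptual point worth flagging is that one must restrict to \emph{finite} covers, since admitting arbitrary covers would conflict with the tameness built into $\CS(\R^d)$ (and would in any event not match the finite covers used in Theorem~\ref{thm:nerve-lemma}). The axioms then assemble the admissible covers into a genuine pre-topology, which generates a Grothendieck topology in the standard way, making $\CS(\R^d)$ into a site.
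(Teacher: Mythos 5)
Your proof is correct and takes essentially the same approach as the paper: admissible covers are finite closed covers of $M$ by constructible subsets, and pullbacks are intersections, which lie in $\CS(\R^d)$ because o-minimal structures are Boolean algebras and compactness is preserved. The paper's own proof stops at exactly those two observations, whereas you additionally verify the isomorphism, composition, and base-change axioms explicitly --- a welcome elaboration but not a different argument.
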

\begin{proof}
For every object $M\in \CS(\R^d)$ we say that $\{M_i \hookrightarrow M\}$ is a covering if it is a finite closed cover of $M$ in the usual sense, i.e.~$\cup M_i = M$.
Pullbacks exist by virtue of the fact that o-minimal sets are closed under intersection.
\end{proof}

\begin{lem}\label{lem:pre-sheaf}
	The following assignment is a pre-sheaf    
	\[
	\F:\CS(\bR^d)^{op} \to \D^b(\Shv(\bS^{d-1}\times \bR)) \qquad
	M \mapsto  \PHT(M)
	\]
	where $\PHT(M)$ is the derived sheaf version of the PHT; see Definition \ref{defn:derived-PHT-sheaf}.
\end{lem}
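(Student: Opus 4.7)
The plan is to establish that $\F$ is a functor by exhibiting, for each inclusion $M' \hookrightarrow M$ of constructible sets, a canonical morphism $\F(M) = \PHT(M) \to \PHT(M') = \F(M')$ in the derived category, and then verifying that this assignment respects identities and composition. The construction will be driven by the observation that an inclusion $M' \hookrightarrow M$ induces a closed inclusion of auxiliary total spaces $i \colon Z_{M'} \hookrightarrow Z_M$ that commutes with the projections to $\bS^{d-1}\times \R$, i.e.\ $f_M \circ i = f_{M'}$.

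First, I would use the unit of the adjunction $(i^*, i_*)$ to produce the restriction map
\[
\bk_{Z_M} \longrightarrow i_*\bk_{Z_{M'}}
\]
on constant sheaves; because $i$ is a closed inclusion, $i_* = Ri_*$ is already exact, so no further derived correction is needed. Applying $Rf_{M*}$ and invoking the composition-of-pushforwards isomorphism $Rf_{M*} \circ Ri_* \simeq R(f_M \circ i)_* = Rf_{M'*}$ yields the desired morphism
\[
\PHT(M) = Rf_{M*}\bk_{Z_M} \longrightarrow Rf_{M*}i_*\bk_{Z_{M'}} \simeq Rf_{M'*}\bk_{Z_{M'}} = \PHT(M').
\]
Concretely, working with the flabby resolution of singular cochains from Definition~\ref{defn:derived-PHT-sheaf}, this morphism is induced levelwise by the natural restriction of singular cochains: for each open $U\subset \bS^{d-1}\times \R$, $f_M^{-1}(U)\cap Z_{M'} = f_{M'}^{-1}(U)$, so restriction along this inclusion defines a chain map $f_{M*}\scrS^{\bullet}(Z_M;\bk) \to f_{M'*}\scrS^{\bullet}(Z_{M'};\bk)$ of complexes of flabby sheaves.

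For functoriality, given $M''\subseteq M'\subseteq M$ with associated closed inclusions $j\colon Z_{M''}\hookrightarrow Z_{M'}$ and $i\colon Z_{M'}\hookrightarrow Z_M$, the composition identity reduces to the compatibility
\[
\bigl(\bk_{Z_M} \to i_*\bk_{Z_{M'}} \to i_*j_*\bk_{Z_{M''}}\bigr) \;=\; \bigl(\bk_{Z_M} \to (i\circ j)_*\bk_{Z_{M''}}\bigr),
\]
which is immediate from functoriality of the adjunction unit and of the pushforward, and equivalently from the functoriality of singular-cochain restriction. The identity inclusion plainly yields the identity map. Overall this is a routine verification; the only mild subtlety is ensuring that the derived pushforward identifications are strict enough to define morphisms in $\D^b(\Shv(\bS^{d-1}\times\R))$ and not merely up to homotopy ambiguity, which is handled by fixing once and for all the flabby cochain resolution as an adapted model for computing $Rf_{M*}$.
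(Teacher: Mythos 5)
Your proposal is correct and follows essentially the same route as the paper: both hinge on the observation that an inclusion of constructible sets induces an inclusion of auxiliary total spaces over $\bS^{d-1}\times\R$, and both realize the resulting morphism $\PHT(M)\to\PHT(M')$ concretely as levelwise restriction of singular cochains on the fixed flabby resolutions. Your additional abstract packaging via the adjunction unit $\bk_{Z_M}\to i_*\bk_{Z_{M'}}$ and the composition isomorphism for pushforwards, together with the explicit check of composition and identities, is a harmless (indeed slightly more complete) elaboration of the same argument.
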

\begin{proof}
	We want to show that $\mathcal{F}$ is a contravariant functor. Let $\iota: M_1\xhookrightarrow{} M_2  $ be an inclusion of constructible sets of $\bR^d$. 
	Note that $ M_1 $ is a closed subspace of $ M_2 $. 
	This induces an inclusion of the auxiliary total spaces $\iota: Z_{M_1}\xhookrightarrow{} Z_{M_2} $ of Definition \ref{defn:pht-sheaf}. 
	This in turn determines a morphism of pre-sheaves $f_{M_2 *}\mathcal{S}^j(Z_{M_2};\bk)\to f_{M_1 *}\mathcal{S}^j(Z_{M_1};\bk)$ for all $j$. 
	To see this, take an open $U\subset \bS^{d-1}\times \bR$ and observe that $f_{M_1}^{-1}(U)$ is open in $ f_{M_2}^{-1}(U)$. 
    More generally, for $U\subset V$ in $\bS^{d-1}\times\bR$ we have a commutative diagram of cochain groups:
	\begin{center}
		\begin{tikzcd}
		\mathcal{S}^j(f_{M_1}^{-1}(U);\bk)& \mathcal{S}^j(f_{M_2}^{-1}(U);\bk) \arrow[l, ""]                \\
		\mathcal{S}^j(f_{M_1}^{-1}(V);\bk) \arrow[u, ""] & \mathcal{S}^j(f_{M_2}^{-1}(V);\bk) \arrow[u, ""] \arrow[l, ""]
		\end{tikzcd}
	\end{center}
    Since sheafification is a functor, we get a morphism $f_{M_2 *} \mathscr{S}^j(Z_{M_2};\bk)\to f_{M_1 *}\mathscr{S}^j(Z_{M_1};\bk)$ for all $j$. 
    These fit together into a morphism between complexes of sheaves:
		\begin{center}
		\begin{tikzcd}
		f_{M_2 *}\mathscr{S}^0(Z_{M_2};\bk) \arrow[r] \arrow[d] & f_{M_2 *}\mathscr{S}^1(Z_{M_2};\bk)\arrow[r] \arrow[d] & f_{M_2 *}\mathscr{S}^2(Z_{M_2};\bk)\arrow[d] \arrow[r] & \cdots \\
		f_{M_1 *}\mathscr{S}^0(Z_{M_1};\bk) \arrow[r]           & f_{M_1 *}\mathscr{S}^1(Z_{M_1};\bk) \arrow[r]           & f_{M_1 *}\mathscr{S}^2(Z_{M_1};\bk) \arrow[r]           & \cdots
		\end{tikzcd}
	\end{center}
The canonical functor from $\cC^b(\textbf{Shv}(\bS^{d-1}\times \bR))\to \cD^b(\textbf{Shv}(\bS^{d-1}\times \bR))$  then induces the desired restriction morphism between derived PHT sheaves:
\[
    \cF(M_2):= Rf_{M_2 *}\bk_{Z_{M_2}} \to Rf_{M_1 *}\bk_{Z_{M_1}} =:\cF(M_1)
\]
\end{proof}

The following is the main result of the paper, which was stated as Theorem \ref{thm:htpy-sheaf} in the introduction. 
We give a direct proof below, but Remark \ref{rem:ss-proof} gives a more intuitive and computationally flavored proof using spectral sequences.

\begin{thm}\label{thm:main-hpty-sheaf}
    The pre-sheaf $\cF$ of Lemma \ref{lem:pre-sheaf} is a homotopy sheaf; see Definition \ref{def:homotopysheaf}.
\end{thm}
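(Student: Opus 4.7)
My plan is to reduce \v{C}ech descent for $\cF$ on $\CS(\R^d)$ to a classical \v{C}ech resolution on the auxiliary total space $Z_M$, which is then pushed forward by $f_M$ to produce the required totalization of PHTs.

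First I would lift the cover. Given a finite closed cover $\mathcal{M}=\{M_i\hookrightarrow M\}_{i\in\Lambda}$ from the site structure of Lemma \ref{lem:site}, the auxiliary-space construction turns closed inclusions into closed inclusions, so $\{Z_{M_i}\hookrightarrow Z_M\}$ is a finite closed cover of $Z_M$ with iterated intersections $\bigcap_{i\in I}Z_{M_i}=Z_{M_I}$, where $M_I=\bigcap_{i\in I}M_i$. Writing $\iota_I:Z_{M_I}\hookrightarrow Z_M$ for the closed embeddings, the main technical step is to produce a quasi-isomorphism in $\D^b(\Shv(Z_M))$
\[
\bk_{Z_M}\xrightarrow{\simeq}\Bigl[\bigoplus_{|I|=1}(\iota_I)_*\bk_{Z_{M_I}}\to\bigoplus_{|I|=2}(\iota_I)_*\bk_{Z_{M_I}}\to\cdots\Bigr]
\]
with alternating-sign restriction differentials. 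I would verify this stalkwise: since each $\iota_I$ is closed, $((\iota_I)_*\bk_{Z_{M_I}})_p$ equals $\bk$ when $p\in Z_{M_I}$ and vanishes otherwise, so the stalk at $p$ of the augmented complex is the augmented simplicial cochain complex on the nonempty simplex $\{i:p\in Z_{M_i}\}$, which is contractible.

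Next I would apply $Rf_{M*}$. Because $\iota_I$ is a closed embedding, $(\iota_I)_*$ is exact and commutes with the flabby singular-cochain resolution used in Definition \ref{defn:derived-PHT-sheaf}; together with the identity $f_M\circ\iota_I=f_{M_I}$, this yields $Rf_{M*}\circ(\iota_I)_*=Rf_{M_I*}$. Applying this term-by-term and computing $Rf_{M*}$ of a bounded complex as its totalization (equivalently via the hypercohomology spectral sequence referenced in Remark \ref{rem:ss-proof}), I obtain
\[
\PHT(M)\;\simeq\;\mathrm{Tot}\Bigl[\bigoplus_{|I|=1}\PHT(M_I)\to\bigoplus_{|I|=2}\PHT(M_I)\to\cdots\Bigr].
\]
To finish, I would identify this totalization with the required homotopy limit: the right-hand side is precisely the normalized Moore cochain complex of the cosimplicial object $[n]\mapsto\prod_{|I|=n+1}\cF(M_I)$, and for finite cosimplicial diagrams in $\D^b(\Shv(\bS^{d-1}\times\R))$ the homotopy limit is computed by exactly this totalization, giving the quasi-isomorphism required by Definition \ref{def:homotopysheaf}.

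The main obstacle is the stalkwise verification of the \v{C}ech resolution. For open covers this would be standard Mayer--Vietoris, but for closed covers one must argue carefully that the local combinatorics really do assemble into the cochain complex of a simplex; here o-minimality and constructibility of the cover elements ensure the point-set bookkeeping goes through. Once that hurdle is cleared, the remaining steps are formal manipulations of derived functors and of totalizations of bounded cosimplicial diagrams.
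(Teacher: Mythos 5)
Your argument is correct, but it is not the route taken in the paper's featured proof; it is essentially a fleshed-out version of the alternative proof the paper only sketches in Remark \ref{rem:ss-proof}. You resolve $\bk_{Z_M}$ upstairs by the \v{C}ech--Godement complex $\bigoplus_{|I|=\bullet}(\iota_I)_*\bk_{Z_{M_I}}$ attached to the finite closed cover $\{Z_{M_i}\}$, verify exactness stalkwise (where, because the $\iota_I$ are closed embeddings, the stalk at $p$ is the augmented cochain complex of the nonempty full simplex on $\{i : p\in Z_{M_i}\}$, hence acyclic --- no o-minimality is needed for this step, contrary to your closing worry), and then push forward using $Rf_{M*}\circ(\iota_I)_* = Rf_{M_I*}$. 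The paper's direct proof instead works downstairs on $\bS^{d-1}\times\R$ with the pushed-forward singular cochain complexes $f_{M_I*}\scrS^{\dotr}(Z_{M_I})$, identifies the kernel of the shift map on stalks at $(v,t)$ with the complex of \emph{small} singular cochains $S^{\dotr}_{\mathcal{M}_{v,t}}(M_{v,t})$ on the fiber, and then must show that small cochains for a \emph{closed} cover still compute $H^*(M_{v,t})$; since barycentric subdivision does not handle closed covers, this is where the o-minimal Triangulation Theorem of \cite{tametopology} enters, by triangulating $M_{v,t}$ subordinate to $\{(M_I)_{v,t}\}$. So the two proofs localize the hard step in different places: yours makes the key exactness purely combinatorial at the level of stalks on $Z_M$ and defers everything else to formal properties of derived pushforwards and totalizations of finite cosimplicial objects, while the paper's direct proof keeps the fibers $M_{v,t}$ in view throughout, which is what feeds the explicit stalkwise spectral-sequence computations used later (Corollary \ref{cor:nerve-lemma} and the circle example). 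Both are complete; your final identification of the totalization of the bounded \v{C}ech complex with the homotopy limit of Definition \ref{def:homotopysheaf} is the same normalization step the paper performs implicitly via the shift-map triangle.
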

\begin{proof}
We have already specified a Grothendieck topology on $\CS(\R^d)$ in Lemma \ref{lem:site}.
Let $\mathcal{M}=\{M_i\}_{i\in \Lambda}$ be a finite closed cover of $M$. 
Since $\mathcal{F}$ is a pre-sheaf we have an inverse system of derived sheaves: 
\[ 
\bigoplus_{I\subset \Lambda \,s.t.\, |I|=1}Rf_{M_I *}\bk_{Z_{M_I}}\rightrightarrows \bigoplus_{J\subset \Lambda \,s.t.\,|J|=2} Rf_{M_{J} *}\bk_{Z_{M_J}} \rightthreearrow{} \bigoplus_{K\subset \Lambda \,s.t.\,|K|=3} Rf_{M_{K} *}\bk_{Z_{M_K}} \to  \cdots  
\]
%Here each $M_I$ with $|I|=k$ represents a $k$-fold intersection of cover elements, i.e. a set of the form $M_{i_1,\dots,i_k}$ for $i_1,\dots,i_k \in \Lambda^k$ with all index elements distinct. 

We wish to show that  $Rf_{M *}\bk_{Z_M}$ is the homotopy limit of the above inverse system of derived sheaves, i.e. we want to show that  
\[
Rf_{M *}\bk_{Z_M} \simeq \holim \bigg[\bigoplus_{|I|=1}Rf_{M_{I} *}\bk_{Z_{M_I}}\rightrightarrows \bigoplus_{|J|=2} Rf_{M_{J} *}\bk_{Z_{M_J}} \rightthreearrow{} \cdots \bigg].
\]
By replacing each $Rf_{M_I *}\bk_{Z_{M_I}}$ with its flabby resolution by singular cochains it suffices to prove that the following is a distinguished triangle:
\begin{center}
	\begin{tikzcd}
    f_{M*}\scrS^{\dotr}(Z_M;\bk) \ar[r] &  \prod_{n}\bigoplus_{|I|=n}f_{M_{I} *} \scrS^{\dotr}(Z_{M_I};\bk) \ar[d,"\text{shift}"] \\
    & \prod_{n}\bigoplus_{|I|=n}f_{M_{I} *}\scrS^{\dotr}(Z_{M_I};\bk) \ar[ul,"+1"] & 
	\end{tikzcd}
\end{center}
%To show this we want to show that the three complexes above form a short exact sequence in the category of chain complexes of sheaves:
To show this we consider the following maps of complexes of sheaves:
% $$  0 \to f_{M*}\scrS^{\dotr}(Z_M) \to \prod_{n}\bigoplus_{|I|=n}f_{M_{I} *} \scrS^{\dotr}(Z_{M_I}) \xrightarrow{\text{shift}}
% \prod_{n}\bigoplus_{|I|=n}f_{M_{I} *}\scrS^{\dotr}(Z_{M_I}) \to 0  $$
$$  f_{M*}\scrS^{\dotr}(Z_M) \to \prod_{n}\bigoplus_{|I|=n}f_{M_{I} *} \scrS^{\dotr}(Z_{M_I}) \xrightarrow{\text{shift}}
\prod_{n}\bigoplus_{|I|=n}f_{M_{I} *}\scrS^{\dotr}(Z_{M_I})  $$
Where we drop the coefficient $\bk$ for convenience.
For every $(v,t)\in \bS^{d-1}\times \bR$, these morphisms induce a  sequence on stalks, which give rise to a sequence of cochain complexes
\begin{equation}\label{homotopysheafonstalks}
S^{\dotr}(M_{v,t}) \to \prod_{n}\bigoplus_{|I|=n}S^{\dotr}((M_I)_{v,t})\xrightarrow{\text{shift}}\prod_{n}\bigoplus_{|I|=n}S^{\dotr}((M_I)_{v,t})
\end{equation}
where $(M_I)_{v,t}$ is the intersection of $M_I$ with the half-space $\{x\mid x\cdot v \leq t\}$.
The kernel of the shift map at each stalk is clearly the cochain complex of small co-chains
$S^{{\dotr}}_{\mathcal{M}_{v,t}}(M_{v,t})$; these are cochains supported on singular simplices that are individually contained in some cover element $ (M_i)_{v,t} $ of the fiber $M_{v,t}$.
Consequently, we have the following distinguished triangle
\begin{center}
	\begin{tikzcd}
    f_{M*}\scrS_{\mathcal{M}}^{\dotr}(Z_M) \ar[r] &  \prod_{n}\bigoplus_{|I|=n}f_{M_{I} *} \scrS^{\dotr}(Z_{M_I})  \ar[d,"\text{shift}"] \\
    & \prod_{n}\bigoplus_{|I|=n}f_{M_{I} *}\scrS^{\dotr}(Z_{M_I})  \ar[ul,"+1"]& 
	\end{tikzcd}
\end{center}
where $\scrS_\mathcal{M}^{\dotr} (Z_M)$ is the sheaf of cochains supported in the cover.

We now show we can replace $ f_{M*}\scrS_{\mathcal{M}}^{\dotr}(Z_M)$ with $ f_{M*}\scrS^{\dotr}(Z_M)$ above. For any $(v,t) \in \mathbb{S}^{d-1}\times \bR$, we show the inclusion of cochain complexes
\begin{equation}\label{smallcochains}
    S^{{\dotr}}_{\mathcal{M}_{v,t}}(M_{v,t}) \hookrightarrow S^{\dotr}(M_{v,t})
\end{equation}
is a quasi-isomorphism. 
Since a map of (derived) sheaves is an isomorphism if and only if it induces an isomorphism on stalks \cite[Prop.~2.2.2]{kashiwara}, we can conclude $ f_{M*}\scrS_{\mathcal{M}}^{\dotr}(Z_M)$ and $ f_{M*}\scrS^{\dotr}(Z_M)$ are isomorphic.

% \remove{Consequently, on the level of stalks we have distinguished triangles}
% \begin{center}\color{pink}
% 	\begin{tikzcd}
%     {S^{{\dotr}}_{\mathcal{M}_{v,t}}(M_{v,t}) \ar[r] &  \prod_{n}\bigoplus_{|I|=n}S^{\dotr}((M_I)_{v,t}) \ar[d,"\text{shift}"] \\
%     & \prod_{n}\bigoplus_{|I|=n}S^{\dotr}((M_I)_{v,t}) \ar[ul,"+1"] & 
% 	\end{tikzcd}
% \end{center}\color{black}
% \remove{We now show that we can replace $S^{{\dotr}}_{\mathcal{M}_{v,t}}(M_{v,t})$ with $S^{\dotr}(M_{v,t})$ above because the inclusion
% \[
% S^{{\dotr}}_{\mathcal{M}_{v,t}}(M_{v,t}) \hookrightarrow S^{\dotr}(M_{v,t})
% \]
% is a quasi-isomorphism, and hence an isomorphism in the derived category.}

To prove inclusion~(\ref{smallcochains}) is a quasi-isomorphism, we appeal to simplicial cohomology.
By the Triangulation Theorem (Theorem 2.9 in \cite{tametopology}) we can triangulate $M_{v,t}$ in a way that is subordinate to the closed cover $\{(M_I)_{v,t}\}$ for arbitrary (yet finite) intersections $M_I$.
Simplicial cochains for this triangulation form a sub-cochain complex of $S^{{\dotr}}_{\mathcal{M}_{v,t}}(M_{v,t})$, but the triangulation can be used to compute cohomology of $M_{v,t}$.
This completes the proof.
\end{proof}

\begin{rmk}[Proof via infinity categories.]
%\justin{Make sure this integrates will with reference at the beginning of the section. Maybe even promote this remark to there.}
    There is yet another perspective to the homotopy sheaf axiom. This can be seen by promoting the derived category of sheaves on $\bS^{d-1}\times \bR$ to the derived \emph{infinity} category. 
    In Appendix~\ref{app:infinity} we show that our desired sheaf axiom for finite covers is automatically satisfied.  %The background on derived infinity categories and derived functors is given in Appendix~\ref{app:infinity}.  The sheaf axiom follows from the fact that right derived functors are exact in infinity categories and so preserves finite limits and  colimits. }
\end{rmk}

We now show how Theorem \ref{thm:main-hpty-sheaf} can be inferred from a simple spectral sequence argument.

\begin{rmk}[Proof via Spectral Sequences]\label{rem:ss-proof}
By Theorem 4.4.1 of Godemont \cite{godement} there is a resolution of $\bk_{Z_M}$ using the cover of $Z_M$ by $\{Z_{M_I}\}$.
As such there is a weak equivalence (quasi-isomorphism)
\begin{equation}\label{godemont-resolution}
    \bk_{Z_M} \to \big[\oplus_{|I|=1}\bk_{Z_{M_I}} \to \oplus_{|J|=2}\bk_{Z_{M_{J}}} \to \cdots \big]
\end{equation}

Applying the right derived pushforward functor preserves this weak-equivalence. This already proves, in essence, \v{C}ech descent for the PHT.
More specifically, the homotopy sheaf axiom is witnessed via a first quadrant spectral sequence. To see this, observe that the sequence of chain complexes in (\ref{homotopysheafonstalks}) can be unpacked to be the double complex given by complex of stalks of the pushforward of singular cochain resolution to sequence~(\ref{godemont-resolution}). 

\begin{center}
\[ \begin{tikzcd}
	{} & {} & {} \\
	{S}^1(M_{v,t};\bk) & \oplus_{|I|=1}{S}^1((M_{I})_{v,t};\bk) & \oplus_{|I|=2}{S}^1({(M_{I})_{v,t}};\bk) & {}\\
    {S}^0(M_{v,t};\bk) & \oplus_{|I|=1}{S}^0((M_{I})_{v,t};\bk) & \oplus_{|I|=2}{S}^0((M_{I})_{v,t};\bk) & {}\\
	(f_*\bk_{Z_M})_{v,t} & \oplus_{|I|=1} (f_*\bk_{Z_{M_{I}}})_{v,t} & \oplus_{|I|=2} (f_*\bk_{Z_{M_{I}}})_{v,t} & {} 
	\arrow[from=4-1, to=4-2]
	\arrow[from=4-2, to=4-3]
	\arrow[from=4-1, to=3-1]
	\arrow[from=3-1, to=2-1]
	\arrow[from=4-2, to=3-2]
	\arrow[from=4-3, to=3-3]
	\arrow[from=3-3, to=2-3]
	\arrow[from=2-1, to=2-2]
	\arrow[from=2-2, to=2-3]
	\arrow[from=3-1, to=3-2]
	\arrow[from=3-2, to=3-3]
	\arrow[from=3-2, to=2-2]
\end{tikzcd}\]
\end{center}

In practice, the spectral sequence gives a method of computing the PHT of $M$ at a point $(v,t)$.
Passing to stalks the first quadrant of the $E_1$ page reads
\begin{center}
\begin{tikzcd}
    {\oplus_{|I|=1}H^2(M_{I,v,t};\bk)} \ar[r] & {\oplus_{|J|=2}H^2(M_{J,v,t};\bk)}  \\
	 {\oplus_{|I|=1}H^1(M_{I,v,t};\bk)} \ar[r] & {\oplus_{|J|=2}H^1(M_{J,v,t};\bk)}  \\
	 {\oplus_{|I|=1}H^0(M_{I,v,t};\bk)} \ar[r] & {\oplus_{|J|=2}H^0(M_{J,v,t};\bk)}

\end{tikzcd}
\end{center}
 Call this complex $ C^n(\mathcal{M}_{v,t};H^q(\bk_\mathcal{M})) $ where $ H^q(\bk_\mathcal{M}):I\to H^q(M_{I,v,t}) $ is a system of coefficients, i.e. a cellular sheaf on the nerve of $\mathcal{M}$. Taking cohomology of this complex horizontally gives us the $ E_2 $ page of the spectral sequence. 
 Theorem 5.2.4 of \cite{godement} guarantees that we converge to the cohomology of $M_{v,t}$. 
\end{rmk}

We now reconsider Theorem \ref{thm:decat} from this spectral sequence perspective.

\begin{rmk}[Redux of the Inclusion-Exclusion Principle for the ECT]
    The inclusion-exclusion principle expression for the indicator function
    \begin{equation*}
        \mathbb{1}_M = \sum_{I\subset \Lambda}(-1)^{|I|+1}\mathbb{1}_{M_I}.
    \end{equation*}
    is exactly the local Euler-Poincar\'e index of Godemont's resolution from Equation \ref{godemont-resolution}.
    The Radon transform expression
    \begin{equation*}
        \mathcal{R}_{S}\mathbb{1}_M = \sum_{I\subset \Lambda}(-1)^{|I|+1}\mathcal{R}_{S}\mathbb{1}_{M_I},
    \end{equation*}
    is exactly the local Euler-Poincar\'e index of the pushforward of the resolution in Equation \ref{godemont-resolution}.
    Checking on stalks reveals that for any $(v,t)\in \bS^{d-1}\times \bR$
    \[ \chi\big(f^{-1}_M(v,t)\big) = \sum_{I\subset \Lambda}(-1)^{|I|+1}\chi\big(f^{-1}_{M_I}(v,t)\big).  
    \]
\end{rmk}

We now illustrate the power of the spectral sequence approach in the following corollary, which was previously stated as Theorem \ref{thm:nerve-lemma} in the introduction.

\begin{cor}\label{cor:nerve-lemma}
	Suppose $ M\in \CS(\bR^d)$ is a polyhedron and suppose $ \mathcal{M} = \{M_i\}_{i\in I}$ is a cover of $ M $ by PL subspaces, then $\pht^n(M) $ is the $ n $-th cohomology of the complex,
	\begin{equation}
	0\to \oplus_{|I|=1}\pht^0(M_I) \to \oplus_{|I|=2}\pht^0(M_I) \to \cdots
	\end{equation}
	where the $\cdots$ represents $\pht^0$ of higher intersection terms. 
\end{cor}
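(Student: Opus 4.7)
The plan is to invoke the spectral sequence from Remark \ref{rem:ss-proof} and then check that the cover $\mathcal{M}$ is acyclic for the higher-degree transforms, in the sense that every row above the bottom of the $E_1$ page vanishes stalkwise. Once this acyclicity is established, the spectral sequence degenerates and the \v{C}ech-style complex of degree-zero PHT sheaves on the intersections directly computes $\PHT^n(M)$ for every $n$.

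The technical core is a fiberwise computation. Fix $(v,t)\in \bS^{d-1}\times\bR$ and set $(M_I)_{v,t} := M_I \cap \{x\mid x\cdot v \leq t\}$, the sublevel set of the height function on the depth-$|I|$ intersection. When $M$ is a polyhedron written as a union of closed linear simplices and $\mathcal{M}$ is that simplicial cover, each $M_I$ is a single closed face, hence convex. Intersecting a convex set with a closed half-space yields another convex set, so $(M_I)_{v,t}$ is either empty or contractible, and in particular $H^q((M_I)_{v,t};\bk)=0$ for all $q\geq 1$. This shows the $E_1$ page from Remark \ref{rem:ss-proof} is concentrated in the row $q=0$.

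With the $E_1$ page supported on a single row, the differentials on all later pages are forced to vanish for degree reasons, so the sequence degenerates at $E_2$ and horizontal cohomology of the $q=0$ row computes the abutment. By the fiberwise description of the Leray sheaf in Definition \ref{defn:pht-sheaf}, this row is exactly the stalk at $(v,t)$ of the complex
\[
0\to \bigoplus_{|I|=1}\PHT^0(M_I) \to \bigoplus_{|J|=2}\PHT^0(M_J) \to \cdots,
\]
and convergence of the spectral sequence (Theorem 5.2.4 of \cite{godement}) identifies the abutment with $H^n(M_{v,t};\bk) = \PHT^n(M)_{(v,t)}$. Since a morphism of sheaves on $\bS^{d-1}\times\bR$ is an isomorphism iff it is one on every stalk, the resulting stalkwise identification globalizes to the claimed sheaf isomorphism.

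The main obstacle is the acyclicity step. For a cover by linear simplices it is essentially free by the convexity argument above, but the statement of the corollary allows $\mathcal{M}$ to be a cover by general PL subspaces, and Figure \ref{fig:PHTGOODCOVER} shows that slicing a non-convex piece by a half-space can introduce genuine higher cohomology. So in the general PL case I would either sharpen the hypothesis to match Theorem \ref{thm:nerve-lemma} (a cover by closed linear simplices), or use the Triangulation Theorem to refine each $M_i$ into a union of linear simplices so that every nerve intersection becomes convex, and then apply the above argument to the refined cover.
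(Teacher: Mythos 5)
Your proof follows the paper's argument exactly: both invoke the spectral sequence of Remark~\ref{rem:ss-proof} and collapse it by showing that the rows $q\geq 1$ of the $E_1$ page vanish stalkwise, so that horizontal cohomology of the bottom row computes $\PHT^n(M)_{(v,t)}$. Your caveat about the hypothesis is well-founded and in fact sharper than the paper's own one-line proof, which simply asserts the vanishing ``for a PL cover'': the vanishing genuinely requires each $(M_I)_{v,t}$ to be acyclic, which your convexity argument secures when the $M_I$ are closed linear simplices (the hypothesis of Theorem~\ref{thm:nerve-lemma}) but which fails for general non-convex PL subspaces; note also that your fallback of refining the cover by a subordinate triangulation yields the complex for the \emph{refined} cover rather than the original one, so the clean fix is to strengthen the hypothesis as you suggest.
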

\begin{proof}
By examining the $E_1$ page of the spectral sequence in Remark \ref{rem:ss-proof} one can see that for a PL cover, the higher homologies, i.e. the higher PHTs, all vanish. Consequently the spectral sequence collapses after the $E_1$ page.
\end{proof}

\begin{rmk}
It should be noted that positive scalar curvature of a constructible set $M$ (when defined) obstructs Theorem \ref{cor:nerve-lemma} from being directly applied. The cover elements may necessarily have higher homology when viewed in a direction along the normal vector to the point with positive scalar curvature. See Figure~\ref{fig:PHTGOODCOVER} for an example.
\end{rmk}

\begin{figure}\label{blah}
\centering
\includegraphics[scale=0.17]{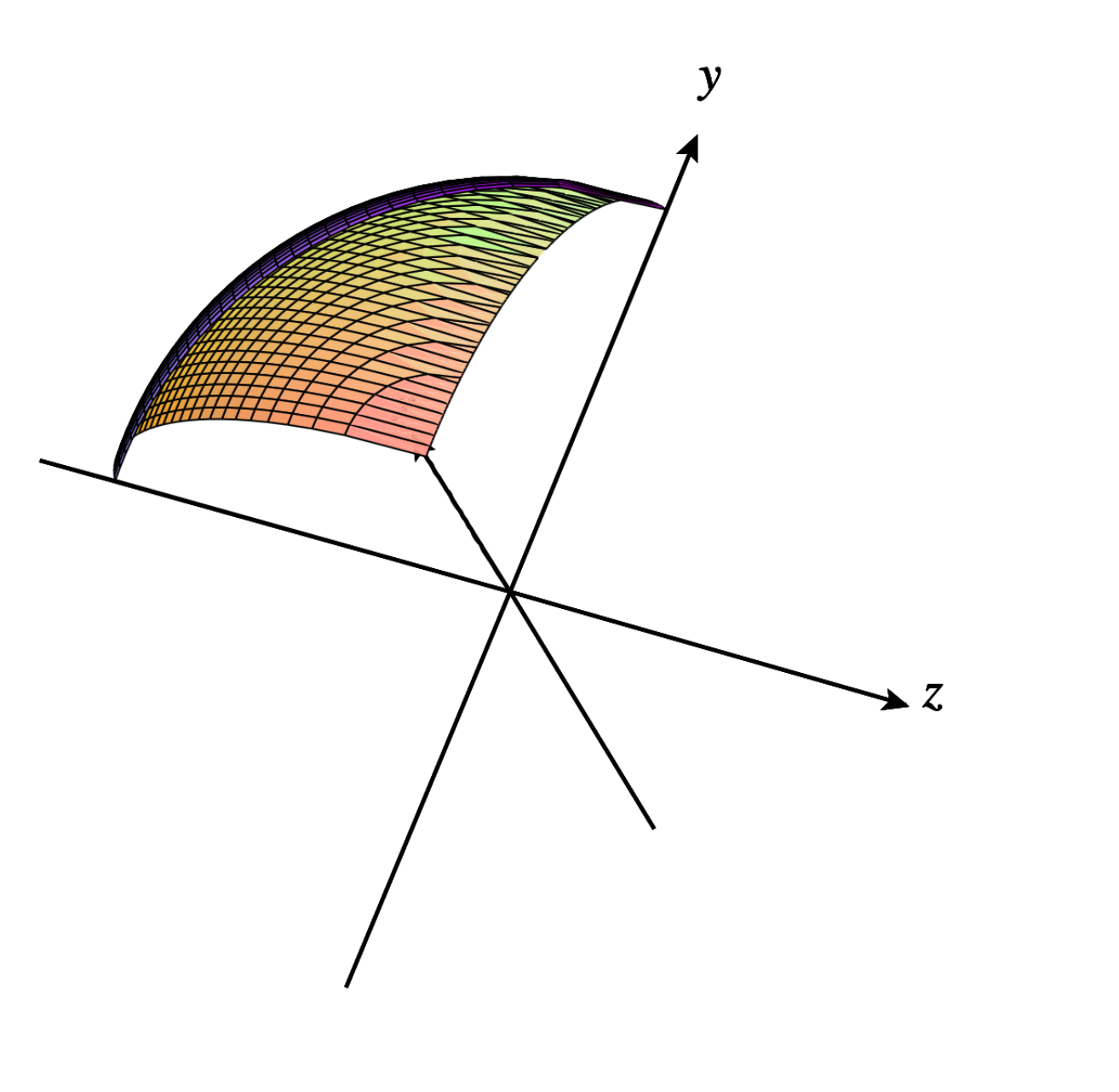}
\includegraphics[scale=0.17]{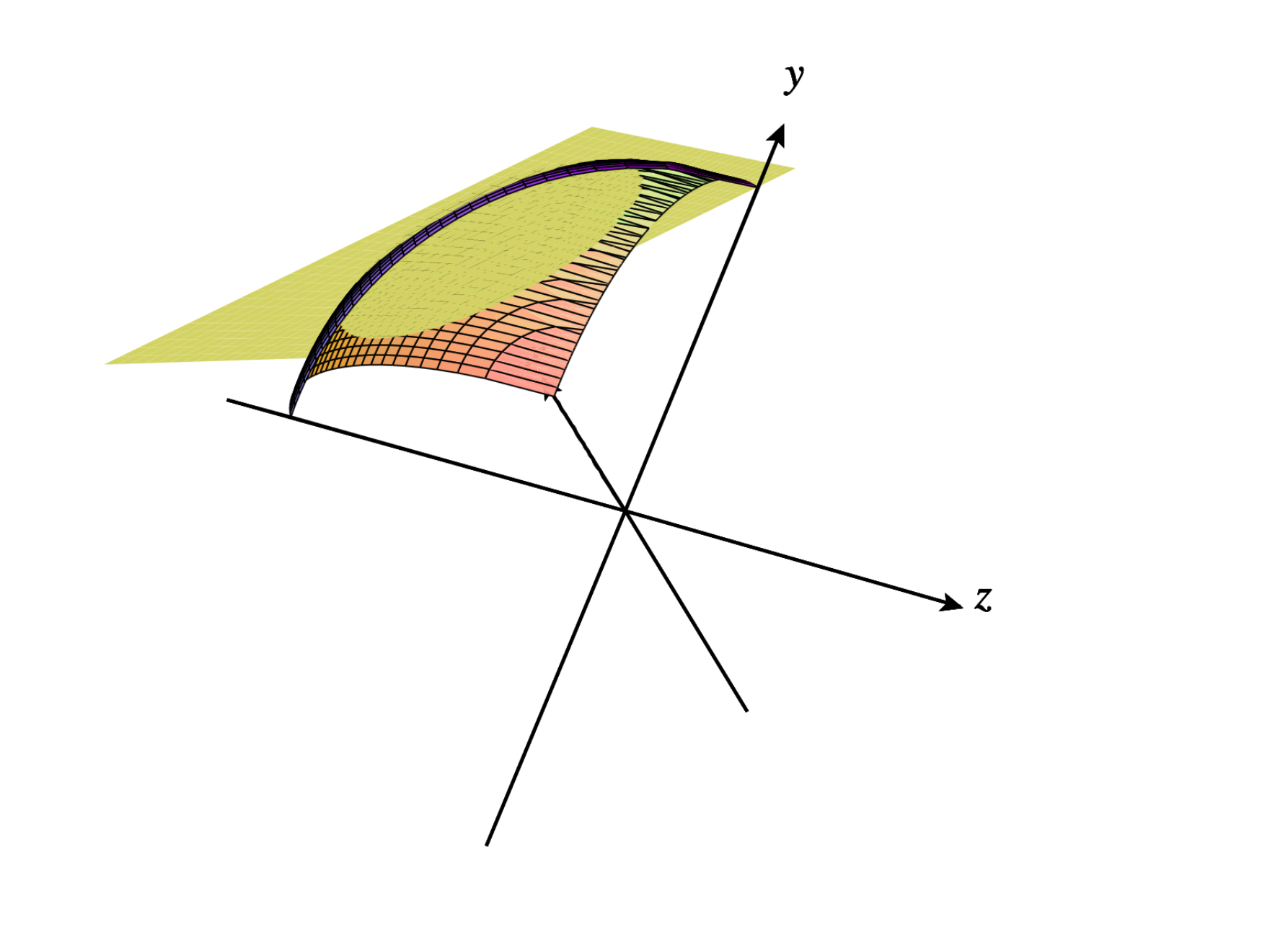}
\includegraphics[scale=0.17]{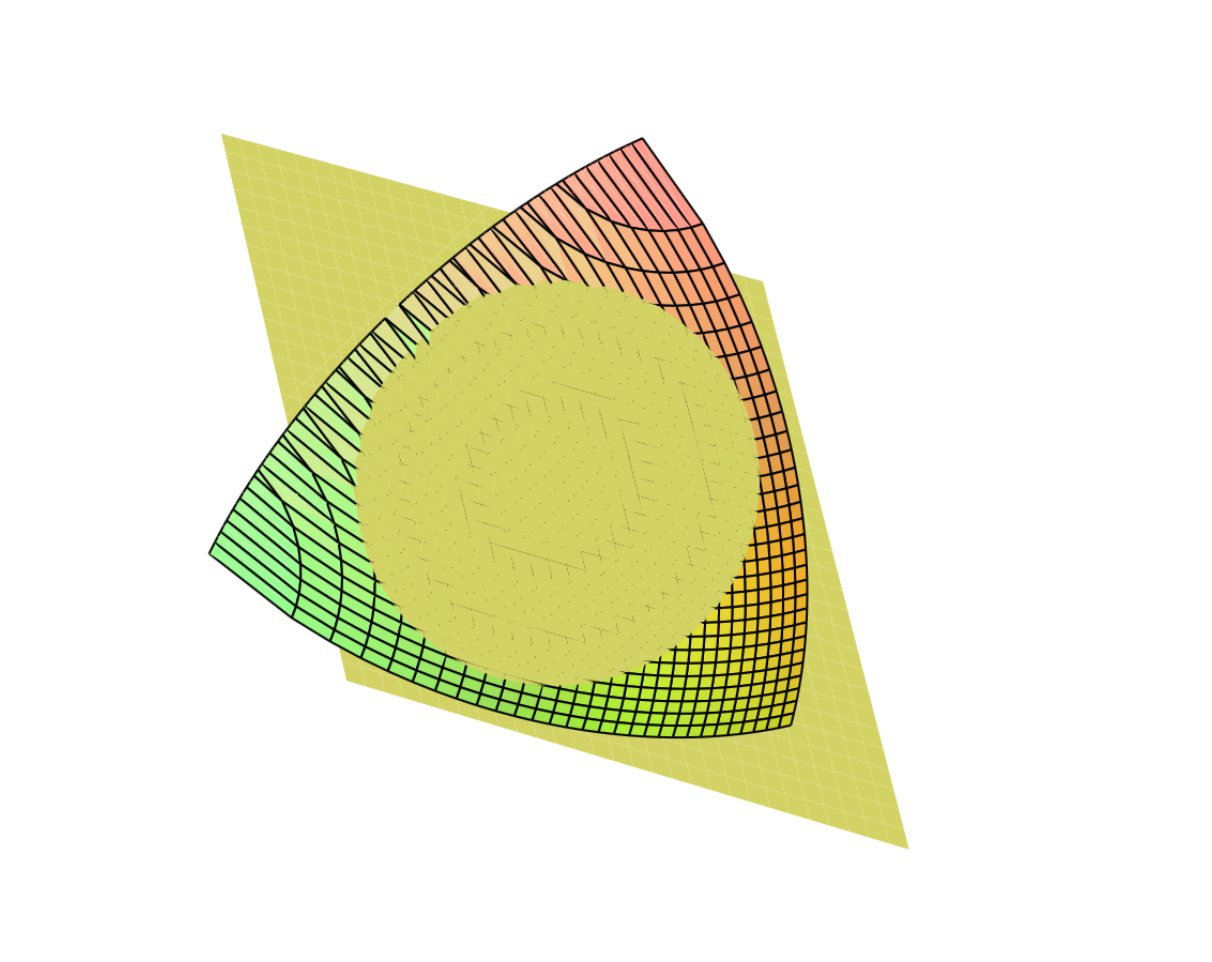}
\caption{For a cover of $M=\bS^{2}$ using the intersection with each orthant in $\R^3$, a cover element $M_i$ can have non-trivial $\PHT^1(M_i)$ even though $\PHT^1(M)$ is trivial.}\label{fig:PHTGOODCOVER}
\end{figure}

%\subsection{Example Calculation}
% So far we have showed that we can construct a homotopy sheaf on shape space where each shape is assigned its persistent homology transform. 
% In this section, we leverage the spectral sequence argument of Remark \ref{rem:ss-proof} to illustrate an explicit calculation of the gluing process. 

We conclude this section with a detailed example that leverages the spectral sequence argument of Remark \ref{rem:ss-proof}.

\begin{figure}[h]
    \centering
    \includegraphics[width=0.25\textwidth]{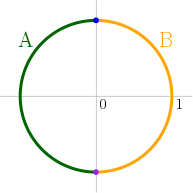}
    \caption{$M=S^1$ with cover elements $A$ and $B$.}
    \label{figure:circle}
\end{figure}

Let $ M=\bS^1 $ be the unit circle in $ \bR^2 $. 
Define a covering $ \mathcal{M} =\{A,B\} $ by two closed half-circles, as indicated in Figure~\ref{figure:circle}. 
First, we compute the PHT of each of the cover elements and their intersection.
Because our PHT sheaves are on $\bS^1\times\R$ we can project this cylinder onto the plane $\R^2$ by following the instructions in the caption of Figure~\ref{figure:PHT of circle}.

\begin{figure}
     \centering
     \begin{subfigure}[b]{0.3\textwidth}
         \centering
         \includegraphics[width=0.95\textwidth]{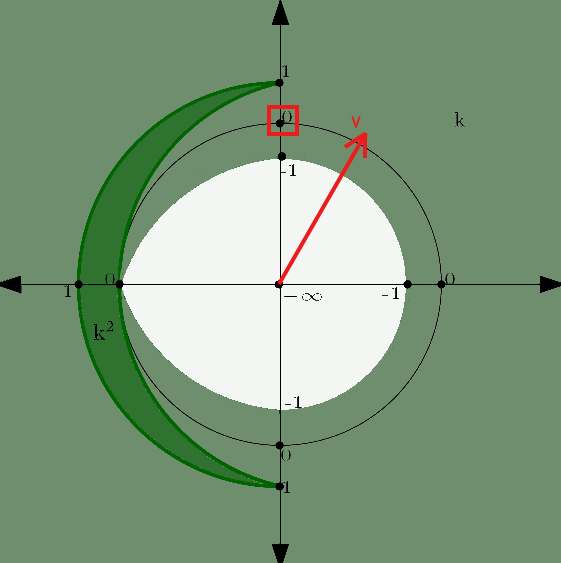}
         \caption{$\pht^0(A)$}
         \label{fig:y equals x}
     \end{subfigure}
     \hfill
     \begin{subfigure}[b]{0.3\textwidth}
         \centering
         \includegraphics[width=0.95\textwidth]{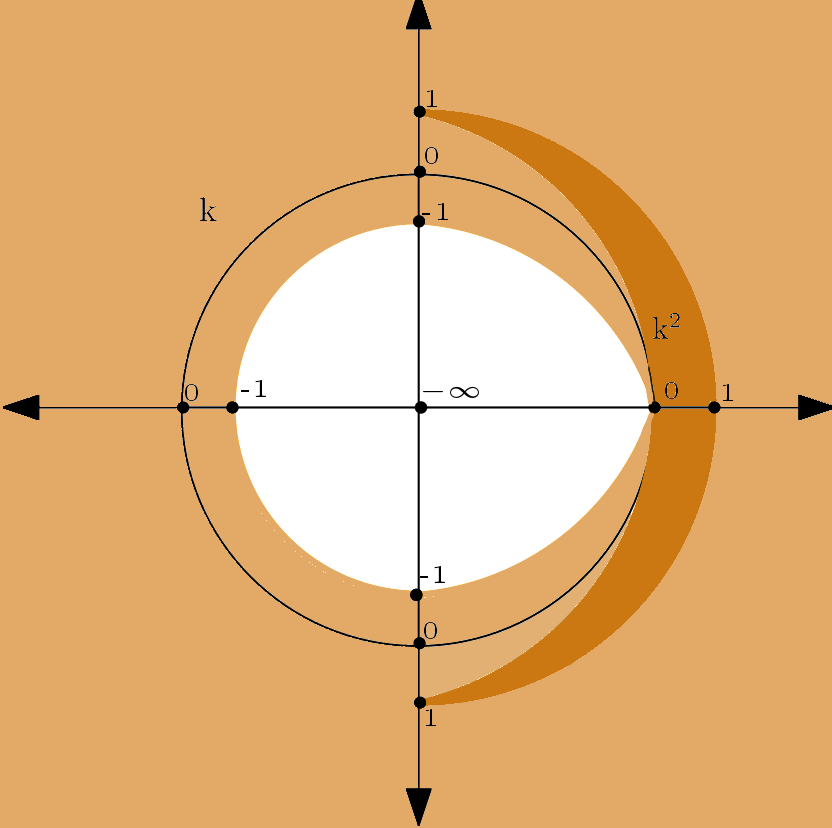}
         \caption{$\pht^0(B)$}
         \label{fig:three sin x}
     \end{subfigure}
     \hfill
     \begin{subfigure}[b]{0.3\textwidth}
         \centering
         \includegraphics[width=0.95\textwidth]{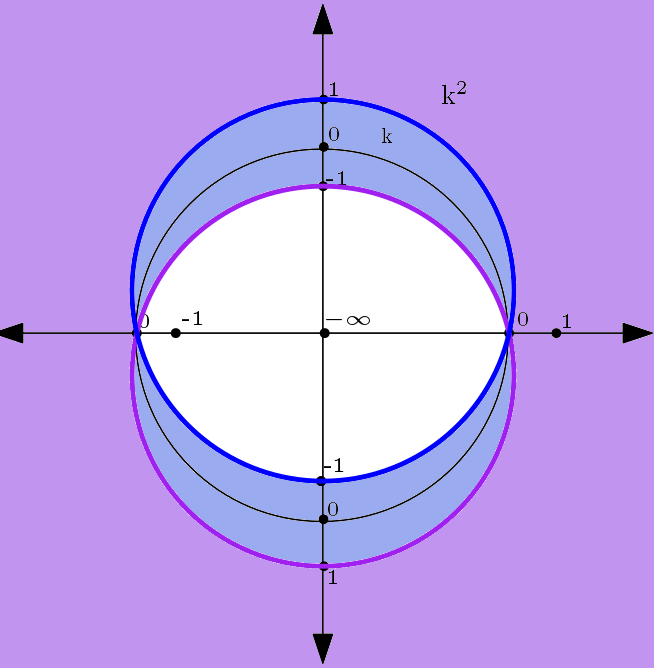}
         \caption{$\pht^0(A\cap B)$}
         \label{fig:five over x}
     \end{subfigure}
        \caption{To visualise the sheaf we have mapped the cylinder $\bS^1\times \bR$ to the plane $\bR^2$ by squashing down a tapering cylinder onto a plane in the following way: fix direction $v \in \bS^1$ and then map $\{v\}\times \bR$ onto $(0,\infty)$. So every direction $v$ has a ray attached to it.  For example, consider direction $w=\uparrow = (0,1)$ and $t=0$ then in figure (A) we see $H^0(A_{w,t},\Bbbk)= \Bbbk$ (see the red square) since $A_{w,t}= \{ x\in A | x\cdot w \le t \}$ has one connected component.}
        \label{figure:PHT of circle}
\end{figure}

Now for every point $ (v,t)\in \bS^1\times\bR $ we write out the spectral sequence in Remark~\ref{rem:ss-proof}. 
For example, let $ (v,t)= (\uparrow,0) $, then the $ E_1 $ page of the spectral sequence works out to be:

\begin{center}
\[\begin{tikzcd}
	& 0 & 0 & 0 \\
	& 0 & 0 & 0 & {} \\
	{} & \bk\oplus\bk & \bk & 0
	\arrow[from=3-2, to=3-3]
	\arrow[from=3-3, to=3-4]
	\arrow[from=1-2, to=1-3]
	\arrow[from=1-3, to=1-4]
	\arrow[from=2-2, to=2-3]
	\arrow[from=2-3, to=2-4]
\end{tikzcd}\]
\end{center}
This spectral sequence collapses after the $E_2$ page and converges to $ H^*(M_{v,t};\bk) $. And so for this example taking cohomology horizontally gives us that $ H^0(M_{v,t};\bk)=\bk $ and $ H^1(M_{v,t};\bk)=0 $.
Since the PHT is a sheaf we can can do this at all $ (v,t) $ to find $ PHT^{*}(M) $. 
Figure~\ref{fig:PHT of M} shows the PHT of $ M $.

\begin{figure}[h]
	\centering
	\includegraphics[width=0.25\textwidth]{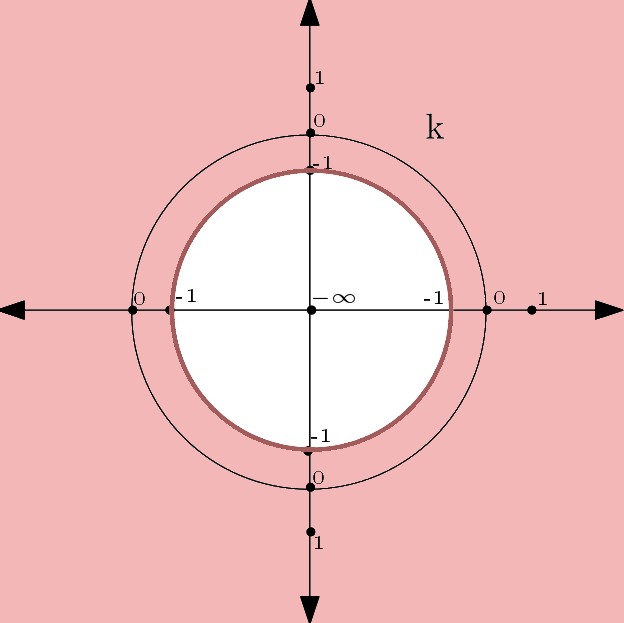}

	\caption{$\pht^0(M)$}
	\label{fig:PHT of M}
\end{figure}

\subsection{Relative PHT and ECT}
We showed how to construct the PHT of a shape by gluing PHTs that from a cover.
Intuitively this corresponds to ``adding'' several PHTs together in a precise way.
A natural question to consider is if there is a process for ``subtracting'' one PHT from another.
This is accomplished by using relative cohomology.

\begin{defn}[Relative PHT]\label{Def:rel PHT}
Let $ M\in \CS(\bR^d)$ be a constructible set and suppose $N\subset M$ is a closed constructible subset of $M$ 
The \define{relative PHT} is defined to be the sheaf over $\bS^{d-1}\times \bR$ defined by
    \[ 
        \pht^i(M,N) = \shff\Big[ U \to H^i\big(f^{-1}_M(U), f^{-1}_N(U); \Bbbk_{Z_M}\big) \Big]. 
    \]
The stalk at $(v,t)\in \bS^{d-1}\times \bR$ is the relative cohomology of the pair $(M_{v,t},N_{v,t})$.
\end{defn}

To prove that this definition suitably ``subtracts'' one PHT from another, consider the long exact sequence of pairs: 
\begin{equation}\label{relativePHT}
    \cdots \to \pht^i(M,N)_{v,t} \to \pht^i(M)_{v,t} \to \pht^i(N)_{v,t} \to  \pht^{i+1}(M,N)_{v,t} \to  \cdots
\end{equation}

Exactness at stalks implies exactness of sheaves and so we have the following long exact sequence of PHT sheaves:
\[ \cdots \to \pht^i(M,N) \to \pht^i(M) \to \pht^i(N) \to \pht^{i+1}(M,N) \to  \cdots \]

Similarly, we can also interpret the long exact sequence of pairs given in Equation~\ref{relativePHT} from the point of view of Euler characteristic. 
 \begin{defn}(Relative ECT)
    The relative ECT of a pair $(M,N)$ where $N$ is a constructible subset of $M$ is the function associated to the relative PHT sheaf under the function-to-sheaf correspondence. 
    That is for all $(v,t)\in \bS^{d-1}\times\bR$, 
    \[ \ECT(M,N)(v,t) = \chi(\pht(M,N))(v,t) = \sum_i (-1)^i \dim H^i\big(f_M^{-1}(v,t),f_N^{-1}(v,t)\big)\]
 \end{defn}

Subtraction of ECTs characterizes the relative Euler Characteristic Transform.
 \begin{lem}
    For closed and constructible $N\subset M$
      \[  \ECT(M,N) = \ECT(M)-\ECT(N). \]
 \end{lem}
 \begin{proof}
    Recall the LES of a pair from Equation~\ref{relativePHT},
 \[ \cdots \to \pht^i(M,N)_{v,t} \to \pht^i(M)_{v,t} \to \pht^i(N)_{v,t} \to  \pht^{i+1}(M,N)_{v,t} \to  \cdots \]
The long exact sequence implies that
 \[ \chi\big(\pht(M)\big)(v,t) = \chi\big(\pht(M,N)\big)(v,t) + \chi\big(\pht(N)\big)(v,t), \]
  which can be rewritten as
  \[ \ECT(M)(v,t) = \ECT(M,N)(v,t) + \ECT(N)(v,t) .\]
 \end{proof}

 %To summarise, the PHT sheaf for a constructible set $M$ corresponds to the ECT of $M$ and the homotopy sheaf axiom for the PHT corresponds to the inclusion-exclusion of the ECT.  

% \begin{center}
%\begin{tabular}{ |c|c| } 
% \hline
 %   Constructible sheaves & Constructible functions  \\ 
%    \hline
 % PHT sheaf & Euler Characteristic Transform \\ 
  %Homotopy sheaf axiom & Inclusion-exclusion %Principle for ECT  \\ 
 % Long exact sequence for PHT of a pair $(M.N)$ & $\ECT(M,N) = \ECT(M)-\ECT(N).$   \\ 
 %\hline
%\end{tabular}
%\end{center}

\section{Metrics, Stability and Approximation Theory for the PHT}\label{sec:stability}

Aside from the intrinsic theoretical interest in a gluing result for the PHT, a practical motivation is to parallelize PHT computations over a cover.
This parallelization inevitably becomes more complex if our cover elements have higher homology when viewed in certain directions and at certain filtration values. See Figure \ref{fig:PHTGOODCOVER} for an example.
This motivates our need to replace our shapes with piecewise-linear (PL) approximations.
The main result of this section proves that, up to some tolerance, we can always approximate a submanifold $M\in \CS(\R^d)$ via a PL shape $K$ so that the PHT's of $M$ and $K$ are arbitrarily close.
This requires several preparatory steps. 
Firstly, we introduce several novel distances\footnote{By which we mean an extended---the value $\infty$ is allowed---metric.} on the PHT and prove that the PHT is stable under small perturbations of the underlying shape. 
This stability property reaffirms our belief that the PHT is a good summary statistic for shapes. 
We also carry out some calculations for especially simple shapes (finite point clouds in $\R^d$) and compare these calculations with the Procrustes distance.
Secondly, we use the sampling procedure from Niyogi-Smale-Weinberger \cite{niyogi} to approximate a submanifold of $\bR^{d}$ by a (PL) polyhedron. 
Finally, we conclude from the stability theorem that the PHT of the polyhedron is close to the PHT of the submanifold.  

%\question{The next section is revised. }

\subsection{Distances on PHTs}\label{sec: distance-on-pht}
In this section, we define distances on PHTs using both the sheaf perspective (Definition \ref{defn:derived-PHT-sheaf}) and the map to persistence diagram space (Definition \ref{defn:map version}) perspective. 
We start with the sheaf perspective as the interleaving distance we introduce is more involved algebraically, but it is also simplest for proving our main stability result. Bounds on the interleaving distance then imply bounds on certain Wasserstein-type distances, but not others.

\subsubsection{Interleaving-type Distances on the PHT}
As first introduced in \cite{chazal2009proximity}, the interleaving distance provides a powerful generalization of the Hausdorff distance to functors from $(\R,\leq)$ to an arbitrary data category $\Dat$.
When $\Dat=\vect$, the celebrated isometry theorem of \cite{lesnick} proves that this distance is equivalent to the bottleneck distance (cf.~Definition \ref{def:Wasserstein-distances}), which due to its combinatorial structure as a matching problem can be computed in polynomial time.
However, the interleaving distance is far more general and permits the construction of distances in more general categorical settings, where no easy distance is to be found.
Following a suggestion of Patel, the interleaving distance for sheaves first appeared in \cite[\S 15]{curry-thesis}, then for cosheaves of sets over $\R$ (equivalent to Reeb graphs) in \cite{CRG}, and finally for derived sheaves over vector spaces in \cite{kashiwara2018persistent} as the \emph{convolution distance}.

In parallel to these developments was the thesis work of Stefanou \cite{stefanou-thesis}, which generalized the interleaving distance to any category equipped with an action of $[0,\infty)$ on it---also called a category with a flow \cite{cat-with-flow}.
This action is used to send any object $F$ to its forward evolution $F^{\epsilon}$.
With this in hand, one defines an $\epsilon$-interleaving (or $\epsilon$-isomorphism \cite[Def.~2.2]{kashiwara2018persistent}) between two objects $F$ and $G$ to be any commutative diagram of the form 
  \begin{equation*}
      \begin{tikzcd}
	F & F^{\e} & F^{2\e} \\
		G & G^{\e} & G^{2\e}
	\arrow[from=1-1, to=1-2]
	\arrow[from=1-2, to=1-3]	\arrow[from=2-1, to=2-2]
	\arrow[from=2-2, to=2-3]
	\arrow[from=2-1, to=1-2]
	\arrow[from=1-1, to=2-2]
	\arrow[from=1-2, to=2-3]
	\arrow[from=2-2, to=1-3]	
\end{tikzcd}
  \end{equation*}
although \cite{cat-with-flow,stefanou-thesis} consider more general diagrams then this one.
Regardless of the particulars, one defines the \define{interleaving distance} $d_I(F,G)$ to be the infimum over all $\epsilon\in [0,\infty)$ where such diagrams exist.
If there is no such diagram relating two objects, then $d_I(F,G)=\infty$.

Returning to interleavings of pre-sheaves, the suggestion of Patel was to define the $\e$-thickening/smoothing of a presheaf on a metric space $X$ to be $F^\e(U)=F(U^\e)$, where $U^\e$ is the thickening of an open set by $\epsilon\geq 0$.
This has a very similar effect as Kashiwara and Schapira's \cite{kashiwara2018persistent} convolution with the constant sheaf supported on the closed ball of radius $\e$.
However, for our applications we leverage the more general perspective of \cite{cat-with-flow} to work with a specialized shift operation in order to define our interleavings, which differs from both \cite{CRG} and \cite{kashiwara2018persistent}.

\begin{defn}[$\e$-Shift of the Derived PHT]
    The \define{$\e$-shift} of the derived PHT sheaf of a shape $M\in \CS(\mathbb{R}^d)$ is 
    $$\PHT(M)^{\e}: =R({f_{M^\e}})_*\Bbbk_{Z_{M^{\e}}}$$ 
    where $Z_{M^{\e}}$ is an $\e$-shift of the set $Z_M$ in the filtration parameter, i.e.,  
$$Z_{M^{\e}} = \{ (x,v,t)\in M\times\bS^{d-1}\times \bR \mid x\cdot v\le t+\e   \}.$$ 
The map $f_{M^\e}$ is the usual projection of $Z_{M^{\e}}$ onto its last two factors. 
Notice that if $(x,v,t)\in Z_M$, then it certainly is contained in $Z_{M^{\e}}$, which implies that $Z_M \subseteq Z_{M^{\e}}$. 
By functoriality of cohomology, there is a restriction map of constant sheaves $\Bbbk_{Z_{M^{\e}}} \to \Bbbk_{Z_{M}}$ and thus a map of sheaves $\PHT(M)^{\e}\to \PHT(M)$.
Further details that this defines an $\epsilon$-shift functor, starting on the image of $\CS(\R^d)^{op} \to \D^b(\Shv(\bS^{d-1}\times \R))$, which further satisfies the axioms of \cite{cat-with-flow} is left to the reader.
Note that an $\e$-shift of the derived PHT sheaf is still a derived sheaf although it might not correspond to the PHT of any particular shape.
\end{defn}

Since our sheaves are defined used cohomology, our interleaving diagram goes in the opposite direction of the one stated above, thus closer in spirit to the interleaving diagrams of \cite[\S15]{curry-thesis} and \cite{kashiwara2018persistent}.

\begin{defn}[Interleaving Distance between PHTs]
Let $M, N \in \CS(\bR^{d})$.  
An $\e$-interleaving of $\PHT(M)$ and $\PHT(N)$ is a pair of morphisms $\varphi : \PHT(M)^\e \to \pht(N)$ and $\psi: \pht(N)^\e\to \pht(M) $ such that the following diagram is commutative:
  \begin{equation}\label{dgm: interleaving}
      \begin{tikzcd}
	{\PHT(M)^{2\e}} & {\PHT(M)^{\e}} & {\PHT(M)^{}} \\
		{\PHT(N)^{2\e}} & {\PHT(N)^{\e}} & {\PHT(N)^{}}
	\arrow[from=1-1, to=1-2]
	\arrow[from=1-2, to=1-3]	\arrow[from=2-1, to=2-2]
	\arrow[from=2-2, to=2-3]
	\arrow[from=2-1, to=1-2]
	\arrow[from=1-1, to=2-2]
	\arrow[from=1-2, to=2-3]
	\arrow[from=2-2, to=1-3]	
\end{tikzcd}
  \end{equation}
 The arrows $\PHT(M)^{2\e} \to \pht(N)^{\e}$ and $\pht(N)^{2\e}\to \pht(M)^{\e}$ being given by the image of $\varphi$ and $\psi$ under the $\epsilon$-shift functor.
The interleaving distance between PHT sheaves is then
\[ d_I(\pht(M),\pht(N)) :=\inf \{\e \ge 0 \mid \exists \, \e \text{-interleaving}\}. \]
If no such interleaving exists, then $d_I(\pht(M),\pht(N))=\infty$.
\end{defn}

\subsubsection{Wasserstein-type Distances on the PHT}

We can also define a metric on the $\PHT$s viewed as map (Definition~\ref{defn:map version}). This is a generalisation of the $p$-PHT distance in \cite[Definition 5.4]{skraba}.  %\question{should we mention Kate and Skarbas paper}
Let $\PH^i(M,v)$ be the persistence diagram in degree $i$ associated to shape $M$ with sub-level set filtration given by the height function in direction $v\in \bS^{d-1}$, i.e. $h_v(x)=x\cdot v$.

\begin{defn}($(p,q)$-PHT distance)\label{defn:pqpht}
The $(p,q)$-PHT distance between $M,N\in\CS(\bR^{d})$ for $p,q\ge 1$ in degree $i$ for $i>0$ and is defined as: 
\[  d^{\pht^i}_{p,q}(M,N) = \left( \int_{v\in \bS^{d-1}} \mathcal{W}_p^q\left(\PH^i(M,v),\PH^i(N,v)\right) d\mu \right)^{\frac{1}{q}}\]
where $\mu$ is the Lebesgue measure on $\bS^{d-1}$.
%In particular, when $q=1$,
%\[  d^{\pht^i}_{p,1}(M,N) = \int_{\bS^{d-1}} \mathcal{W}_p\left(\PH^i(M,-),\PH^i(N,-)\right) \ dv  \]
When $q=\infty$,
\[  d^{\pht^i}_{p,\infty}(M,N) = \max_{v\in \bS^{d-1}} \mathcal{W}_p\left(\PH^i(M,v),\PH^i(N,v)\right). \]
Note that when $p=\infty$, we have the bottleneck distance between persistence diagrams. 
\end{defn}

Below, we consider the cases where $p=2$ or $p=\infty $, i.e., the Wasserstein 2-distance or the bottleneck distance on diagrams. 
Additionally, we will restrict ourselves to $q=2$, which computes the squared average diagram distance over all directions, or  $q=\infty$, which takes the biggest diagram distance over all directions. 
We refer to $d^{\pht^i}_{\infty,\infty}$ as the \define{PHT bottleneck distance} in degree $i$. 
The next lemma explains the relationship between the PHT sheaf interleaving distance and the PHT bottleneck distance. 

\begin{lem}\label{lem:relation-between-dist}
\[d_I(\pht(M),\pht(N))  \ge \max_{i\ge 0} d^{\pht^i}_{\infty,\infty}(M,N). \]
\end{lem}
\begin{proof}
    
Evaluating diagram~\ref{dgm: interleaving} on stalks $(v,t)\in \bS^{d-1}\times \bR$ gives 
  \[\begin{tikzcd}
	{H^i\left(f^{-1}_{M^{2\e}}(v,t);\Bbbk \right)} & {H^i\left(f^{-1}_{M^{\e}}(v,t);\Bbbk \right)} & {H^i\left(f^{-1}_{M^{}}(v,t);\Bbbk \right)} \\
		{H^i\left(f^{-1}_{N^{2\e}}(v,t);\Bbbk \right)} & {H^i\left(f^{-1}_{N^{\e}}(v,t);\Bbbk \right)} & {H^i\left(f^{-1}_{N^{}}(v,t);\Bbbk \right)}
	\arrow[from=1-1, to=1-2]
	\arrow[from=1-2, to=1-3]	\arrow[from=2-1, to=2-2]
	\arrow[from=2-2, to=2-3]
	\arrow[from=2-1, to=1-2]
	\arrow[from=1-1, to=2-2]
	\arrow[from=1-2, to=2-3]
	\arrow[from=2-2, to=1-3]	
\end{tikzcd}\] 
This can be interpreted as an $\e$-interleaving of persistence modules in degree $i$ obtained by filtering $M,N$ in direction $v$. The isometry theorem~\cite{chazal,lesnick} guarantees that the interleaving distance between persistence modules is equal to the the bottleneck distance between their corresponding persistence diagrams. In other words, 
\[d_I(\pht(M),\pht(N)) =\e \implies \forall i \ \forall v \in \bS^{d-1} \quad  \mathcal{W}_{\infty}^i\left(\PH(M,v),\PH(N,v)\right) \le \e, \]
where $\mathcal{W}_{\infty}^i$ is the bottleneck distance in degree $i$. 
\end{proof}

\subsection{Comparison with other Distances on PHTs and Shape Spaces}\label{sec:compare-distances}

We now proceeed to compute the above mentioned distances on some simple examples and attempt a comparison with other shape space metrics, primarily the Procrustes distance.
A more detailed computation of the PHT of the two embeddings of the letter `V' from Figure \ref{fig:pht-letter-V} is carried out in Example \ref{exmp:PHT-v-distance} in Section \ref{sec:stability-thm} after our main stability result is proved. 

\subsubsection{Distances between Point Clouds}
We begin with the simplest possible shapes: a finite collection of points in $\R^d$.
It should be noted that one quirk of the persistent homology transform is that it is very sensitive to the global homology of a shape.
Consequently, if two point clouds have differing numbers of points and no further construction is performed on them, then their PHTs are infinitely far apart.
To this end, let $A=\{a_1,\cdots, a_n\}$ and $B=\{b_1,\cdots, b_n\}$ be two point clouds in $\bR^d$.

% Since the sublevel sets of $A$ and $B$ are finite collection of points, we have that the sheaf interleaving distance is equal to the maximum bottleneck distance over all directions in $\bS^{d-1}$. 

% \justin{Will promote these next three results to Propositions.}

\begin{prop}\label{prop:point-cloud-distances}
    If $A= [a_1,\cdots a_n]$ and $B=[b_1,\cdots, b_n]$ are point clouds regarded as matrices where the vectors that coordinatize each point are stored as columns, then
    % \begin{enumerate}
    %     \item  $$d_I(\pht(A),\pht(B))= d^{\pht^0}_{\infty,\infty}(A,B) =\inf_{\substack{\phi:A\to B \\ \text{ matching}}} \max_{a_i\in A} \|a_i-\phi(a_i) \|_{\bR^d} $$

        \begin{equation}\label{eq:point-cloud-1}
            d_I(\pht(A),\pht(B))= d^{\pht^0}_{\infty,\infty}(A,B) =\inf_{\substack{\phi:A\to B \\ \text{ matching}}} \max_{a_i\in A} \|a_i-\phi(a_i) \|_{\bR^d} 
        \end{equation}
   
        % \item
        % \begin{align*}
        %     d^{\pht^0}_{2,2}(A,B) &= \left(\int_{\|v\|=1} \inf_{\substack{\phi:A\to B \\ \text{ matching}}} \sum_{i=1}^n|a_i\cdot v-\phi(a_i)\cdot v |^2 d\mu \right)^{1/2} \\&  \le \inf_{\substack{\phi:A\to B \\ \text{ matching}}}  \frac{\sqrt{\text{Area}(\bS^{d-1})}\times \|A-\phi(A)\|_F}{\sqrt{n}}.
        % \end{align*}

        \begin{equation}\label{eq:point-cloud-2}
        \begin{aligned}%\label{eq:point-cloud-2}
            d^{\pht^0}_{2,2}(A,B) &= \left(\int_{\|v\|=1} \inf_{\substack{\phi:A\to B \\ \text{ matching}}} \sum_{i=1}^n|a_i\cdot v-\phi(a_i)\cdot v |^2 d\mu \right)^{1/2} \\ 
            &  \le  \inf_{\substack{\phi:A\to B \\ \text{ matching}}}  \frac{\sqrt{\text{Area}(\bS^{d-1})}\times \|A-\phi(A)\|_F}{\sqrt{n}}.
        \end{aligned}
        \end{equation}
        
        %\item $$ d^{\pht^0}_{2,\infty}(A,B) =\inf_{\substack{\phi:A\to B \\ \text{ matching}}}  \|A-\phi(A)\|_\infty.$$
        %\item $d^{\pht^0}_{\infty,2}(A,B) = $$
        \begin{equation}\label{eq:point-cloud-3}
            d^{\pht^0}_{2,\infty}(A,B) =\inf_{\substack{\phi:A\to B \\ \text{ matching}}}  \|A-\phi(A)\|_\infty.
        \end{equation}
    %\end{enumerate}
    where  $\|\cdot\|_F$ and $\|\cdot\|_\infty$ stands for the Frobenius norm and Schatten $\infty$-norm of a matrix respectively.
\end{prop}

\begin{proof}
    To prove Equation \ref{eq:point-cloud-1}, we prove that the sheaf interleaving distance is equal to the max bottleneck distance over all directions. By Lemma~\ref{lem:relation-between-dist}, we have that an $\e$-interleaving of PHT sheaves implies that the $(\infty,\infty)$-PHT distance is less than $\e$. Suppose the latter distance is $\e$ in degree 0, since the sublevel sets are finite point sets, for any $v\in \bS^{d-1}$ and $t\in \bR$ the following diagram commutes,

% https://q.uiver.app/#q=WzAsNixbMCwwLCJhIl0sWzEsMCwiYiJdLFsyLDAsImMiXSxbMCwxLCJkIl0sWzEsMSwiZSJdLFsyLDEsImYiXSxbMSwwXSxbMiwxXSxbNCwzXSxbNSw0XSxbMiw0XSxbNSwxXSxbNCwwXSxbMSwzXV0=
\[\begin{tikzcd}
	{f^{-1}_{A^{2\e}}(v,t)} & {f^{-1}_{A^{\e}}(v,t)} & {f^{-1}_{A^{}}(v,t)} \\
		{f^{-1}_{B^{2\e}}(v,t)} & {f^{-1}_{B^{\e}}(v,t)} & {f^{-1}_{B^{}}(v,t)}
	\arrow[from=1-2, to=1-1]
	\arrow[from=1-3, to=1-2]
	\arrow[from=2-2, to=2-1]
	\arrow[from=2-3, to=2-2]
	\arrow[from=1-3, to=2-2]
	\arrow[from=2-3, to=1-2]
	\arrow[from=2-2, to=1-1]
	\arrow[from=1-2, to=2-1]
\end{tikzcd}\]

Since the sets in the above diagram are finite, it extends to a commutative diagram for every open $U\in\bS^{d-1}\times \bR$ and so, by the functoriality of the right derived functor there is an $\e$-interleaving of $\pht(A)$ and $\pht(B)$. So,  $d_I(\pht(A),\pht(B)) = d^{\pht^0}_{\infty,\infty}(A,B) $. For the case of points sets, the PHT Bottleneck distance turns out to be:
\[ d^{\pht^0}_{\infty,\infty}(A,B) = \max_{v\in\bS^{d-1}}  \inf_{\substack{\phi:A\to B \\ \text{ matching}}} \max_{a_i\in A} |a_i\cdot v-\phi(a_i)\cdot v|=\inf_{\substack{\phi:A\to B \\ \text{ matching}}} \max_{a_i\in A} \|a_i-\phi(a_i) \|_{\bR^d}   \]

To prove Equation \ref{eq:point-cloud-2}, we need to calculate,
$$  d^{\pht^0}_{2,2}(A,B) =  \left( \int_{v\in \bS^{d-1}}  \inf_{\substack{\phi:A\to B \\ \text{ matching}}} \sum_{i=1}^n|a_i\cdot v-\phi(a_i)\cdot v |^2 d\mu \right)^{1/2} $$ where $\mu$ is the Lebesgue measure on $\bS^{d-1}$. Let $X_i=a_i-\phi(a_i)$, and so rewriting the above expression in terms of matrices gives, 
\begin{align*}
     \left(d^{\pht^0}_{2,2}(A,B)\right)^2 \le&  \inf_{\substack{\phi:A\to B \\ \text{ matching}}} \int_{v\in\bS^{d-1}} v^T \left(\sum_{i=1}^nX_iX_i^T\right)v\ d\mu \\ =& \inf_{\substack{\phi:A\to B \\ \text{ matching}}} \int_{v\in\bS^{d-1}} v^T Yv\ d\mu \\ =& \inf_{\substack{\phi:A\to B \\ \text{ matching}}} \int_{v\in\bS^{d-1}} \mathrm{tr} (v^T Yv) \ d\mu \\=& \inf_{\substack{\phi:A\to B \\ \text{ matching}}} \int_{v\in\bS^{d-1}} \mathrm{tr} ( Yvv^T) \ d\mu\\=& \inf_{\substack{\phi:A\to B \\ \text{ matching}}}\mathrm{tr}\left( Y \int_{v\in\bS^{d-1}} vv^T \ d\mu  \right)
\end{align*}
where $Y$ is $\sum_{i=1}^nX_iX_i^T$.  The integral $A:=\int_{v\in\bS^{d-1}} vv^T \ d\mu $ is invariant under conjugation by orthogonal matrices, since the sphere is rotationally invariant. So, in particular for orthogonal matrix $U$ we have,
\[A= \int_{v\in\bS^{d-1}} vv^T \ d\mu = \int_{v\in\bS^{d-1}} Uvv^TU^T \ d\mu = UAU^T.\]
The matrix $A$ commutes with orthogonal matrices and so by Schur's Lemma, $A$ must be a scalar times the identity matrix. 
\[ A =  \int_{v\in\bS^{d-1}} vv^T = \lambda I  \] where $\lambda$
 can be found my taking trace on both sides i.e.,
 \[ \int_{v\in\bS^{d-1}} d\mu  = \lambda n.  \] 
 So, we have,
 \begin{align*}
     \inf_{\substack{\phi:A\to B \\ \text{ matching}}}\mathrm{tr}\left( Y \int_{v\in\bS^{d-1}} vv^T \ d\mu  \right) = \inf_{\substack{\phi:A\to B \\ \text{ matching}}}\frac{1}{n} \mathrm{tr}(Y) \mathrm{Area}(\bS^{d-1}) = \inf_{\substack{\phi:A\to B \\ \text{ matching}}}\frac{1}{n} \|A-\phi(A)\|_F^2 \mathrm{Area}(\bS^{d-1}) .
 \end{align*}

%Since $Y$ is a positive semi-definite symmetric matrix, it has a singular value decomposition,  $Y=U\Sigma U^T$ where $U$ is an orthogonal matrix and $\Sigma$ is a diagonal matrix given by $\diag(\sigma_1,\cdots,\sigma_n)$. The integral defined above is invariant under conjugation by orthogonal matrices, since the sphere is rotationally invariant. So,
%\[  \int_{v\in\bS^{d-1}} v^T Yv\ d\mu  =  \int_{v\in\bS^{d-1}} v^T \Sigma v\ d\mu = \sum_{i=1}^n \int_{v\in\bS^{d-1}} (v_i)^2\sigma_i d\mu =  \]

Finally, to prove Equation \ref{eq:point-cloud-3}, we note that
\begin{align*}
    \left(d^{\pht^0}_{2,\infty}(A,B)\right)^2 =& \max_{v\in \bS^{d-1}} \inf_{\substack{\phi:A\to B \\ \text{ matching}}}  \sum_{i=1}^n|a_i\cdot v-\phi(a_i)\cdot v |^2 \\ =& \inf_{\substack{\phi:A\to B \\ \text{ matching}}} \max_{v\in \bS^{d-1}} v^TYv \\ =& \inf_{\substack{\phi:A\to B \\ \text{ matching}}} \lambda_{\max}(Y) \\=& \inf_{\substack{\phi:A\to B \\ \text{ matching}}} \lambda_{\max}\left((A-\phi(A)(A-\phi(A))^T\right) \\=& \inf_{\substack{\phi:A\to B \\ \text{ matching}}}  \|A-\phi(A)\|_\infty  ,
\end{align*}
 where $\|\cdot \|_\infty$ stands for the $\infty$-Schatten norm of the matrix.                                                                       
\end{proof}

The appearance of matrix norm expressions for our PHT distances is a welcome development, as it permits a qualitative comparison with a certain class of Procrustes distances.

\begin{rmk}[Comparison with Procrustes Distances]
    The general \define{Procrustes distance} \cite{procrustes} between two \emph{ordered} point clouds in $\bR^d$, scaled by their centroid sizes\footnote{The centroid size is given by $\left(n^{-1}\sum_{i=1}^n\|a_i-\bar{a}\|^2\right)^{\frac{1}{2}}$ where $\bar{a}$ is the mean of $\{a_i\}_{i=1}^n$.}, $A=[a_1,\cdots,a_n]$ and $B=[b_1,\cdots,b_n]$ is 
    $$d_P(A,B) = \inf_{R\in \mathcal{R}} \left(\sum_{i=1}^n \|Ra_i-b_i\|^2 \right)^{1/2} =  \inf_{R\in \mathcal{R}} \|RA-B\|_F  $$
    where $\mathcal{R}$ is the group of rigid motions on $\bR^d$. 
    One advantage of the PHT distances considered here is that apriori no ordering needs to be put on the points in the point clouds.
    On the other hand, the PHT distances are sensitive to the embedding and consequently point clouds are not compared via their optimal alignments.
    
    However, there is a closer connection between the ECT/PHT and the \emph{orthogonal} Procrustes distance, which is given by
    \[
        d_{OP}(A,B) = \min_{R \text{ s.t. } R^TR=I} \| RA-B \|_F,   
    \]
    and whose solution $R=UV^T$ is determined by the singular value decomposition (SVD) of $B^TA=U\Sigma V^T$.
    By comparison, \cite[Thm.~6.7]{cmt} states that if two generic simplicial complexes $M$ and $N$ in $\R^d$ have identical pushforward measures on the space of Euler curves (or persistence diagrams), i.e., if $\ECT(M)_*\mu = \ECT(N)_*\mu$, then $M$ and $N$ are related by an $O(d)$ action.
    This, along with Remark \ref{rmk:equivariance}, suggests that one could modify the PHT distances here to also consider a minimization procedure along orthogonal transformations or rigid motions.
    \end{rmk}

\subsection{The Stability Theorem}\label{sec:stability-thm} 

In general the PHT distances are hard to compute, so often times we need to use other notions to bound the PHT distance.
In this section we prove that if two shapes are homotopic through an $\e$-controlled homotopy, then their derived PHTs (Definition~\ref{defn:derived-PHT-sheaf}) are also $\e$-close in the interleaving distance.

\begin{thm}[Stability of the PHT under Controlled Homotopies]\label{thm::stability}
     Let $M,N \subseteq \mathbb{R}^d$ be constructible sets and let $\varphi: M \to N$ and $\psi: N\to M$ be a homotopy equivalence of $M$ and $N$; that is, there are homotopies $H_M:M\times I\to M$ and $H_N:N\times I\to N$ connecting $Id_M$ to $\psi\circ \varphi$ and $Id_N$ to $\varphi\circ \psi$.
     If there is some $\e >0 $ such that $\|x-\varphi(x)\|_{\mathbb{R}^d}^2\le \e$ for all $x\in M$ and $\|y-\psi(y)\|_{\mathbb{R}^d}^2 \le \e$ for all $y\in N$, where further $\|x-H_M(x,s)\|_{\bR^{d}}^2 \le 2\e$ for all $x\in M$ and $\|y-H_N(y,s)\|_{\bR^{d}}^2 \le 2\e$ for all $y\in N$ and $s\in I$, then
     %Let $H_M:M\times I\to M$ (resp. $H_N:N\times I\to N$) be the homotopy connecting $Id_M$ and $\psi\circ \varphi$ (resp. $Id_N$ and $\varphi\circ \psi$). 
     %Further, for all $t\in I$ assume that $\|x-H_M(x,t)\|_{\bR^{d}}^2 \le 2\e$ for all $x\in M$ and $\|y-H_N(y,t)\|_{\bR^{d}}^2 \le 2\e$ for all $y\in N$.  Then 
     the PHT of $M$ and $N$ are $\e$-interleaved. %as sheaves. 
\end{thm}

    \begin{proof}
        We show the following diagram is commutative. 
    \begin{equation}\label{eqn::interleaving}
    \begin{tikzcd}
	{R(f_{M^{2\e}})_*\Bbbk_{Z_{M^{2\e}}}} & {R(f_{N^{2\e}})_*\Bbbk_{Z_{N^{2\e}}}} \\
	{R(f_{M^{\e}})_*\Bbbk_{Z_{M^{\e}}}} & {R(f_{N^{\e}})_*\Bbbk_{Z_{N^{\e}}}} \\
	{R(f_{M^{}})_*\Bbbk_{Z_M}} & {R(f_{N^{}})_*\Bbbk_{Z_N}^{}}
	\arrow[from=1-1, to=2-1]
	\arrow[from=2-1, to=3-1]
	\arrow[from=1-1, to=2-2]
	\arrow[from=2-2, to=3-1]
	\arrow[from=1-2, to=2-2]
	\arrow[from=2-2, to=3-2]
	\arrow[from=1-2, to=2-1]
	\arrow[from=2-1, to=3-2]
\end{tikzcd}\tag{*}\end{equation}

We prove this for the left triangle as the commutativity of the right triangle follows from a similar argument.
Let $U\subset\bS^{d-1}\times\bR$ be a test open set and so $Rf_*\Bbbk_{Z_{M^{\e}}}(U) =  \big[\mathscr{S}^\bullet(f_{M^\e}^{-1}(U);\Bbbk)\big] $ where $\big[\mathscr{S}^\bullet(f_{M^\e}^{-1}(U);\Bbbk)\big]$ represents the class of complexes quasi-isomorphic to the singular cochain complex on $f_{M^\e}^{-1}(U)$. 
We first prove that we have the following \emph{non-commutative} diagram of topological spaces 
\[\begin{tikzcd}
	{f_{M^{2\e}}^{-1}(U)} \\
	{f_{M^{\e}}^{-1}(U)} & {f_{N^{\e}}^{-1}(U)} \\
	{f_M^{-1}(U)}
	\arrow[hook, from=3-1, to=2-1]
	\arrow[hook, from=2-1, to=1-1]
	\arrow["g"', from=3-1, to=2-2]
	\arrow["h"', from=2-2, to=1-1]
\end{tikzcd}\]
that commutes up to homotopy; that is $h\circ g$ is homotopic to $\iota:f_M^{-1}(U)\xhookrightarrow{} f_{M^{2\e}}^{-1}(U)$. 
If we apply the singular cochain functor to this diagram and then take quotients by quasi-isomorphisms, we will get the desired commutative trianglea in Equation \ref{dgm: interleaving}. 

We now explicitly describe the maps $g$ and $h$. 
For $(x,v,t) \in f_M^{-1}(U)$ define $g$ such that  $g(x,v,t) = (\varphi(x),v,t+\e) $. 
It remains to verify that $(\varphi(x),v,t)$ is in $ f_{N^\e}^{-1}(U)$. 
Note that \[\varphi(x)\cdot v - x\cdot v = (\varphi(x)-x)\cdot v \le \|\varphi(x)-x\|^2 \le \e. \] Since $x\cdot v \le t $, we have that $\varphi(x)\cdot v - t \le \varphi(x)\cdot v - x\cdot v \le \e $ and so $\varphi(x)\cdot v \le t+\e$. 
Similarly for $(y,v,t)\in f_{N^\e}^{-1}(U)$, let $h(y,v,t)=(\psi(y),v,t+\e)$. 
After composing we get that $h\circ g (x,v,t) = (\psi\circ\varphi(x),v,t+2\e)$. 
To see that $h\circ g$ is homotopic to the inclusion $\iota:f_M^{-1}(U)\xhookrightarrow{} f_{M^{2\e}}^{-1}(U)$, define a map $K:f_M^{-1}(U) \times I \to f_{M^{2\e}}^{-1}(U) $ by 
\[ K((x,v,t),s) = (H_M(x,s),v,t+2\e). \]
By Cauchy-Schwarz, 
\[
H_M(x,s)\cdot v - x\cdot v = (H_M(x,s)-x)\cdot v \le \|H_M(x,s)-x\|^2\le 2\e
\]
for all $s\in I$. 
Since $x\cdot v\le t$, we have $H_M(x,s)\cdot v\le t+2\e$. 
This means that $K((x,v,t),s)=(H_M(x,s),v,t+2\e) \in f_{M^{2\e}}^{-1}(U)$ for $(x,v,t)\in f_M^{-1}(U)$. Further, $K((x,v,t),0)= (x,v,t+2\e)$ and $K((x,v,t),1)= (\psi\circ\varphi(x),v,t+2\e)= h\circ g(x,v,t)$. Continuity follows from continuity of $H_M$ and so $K$ is a homotopy between the inclusion map and $h\circ g$.
\end{proof}

Application of Lemma~\ref{lem:relation-between-dist} implies the following,
\begin{cor} (Stability of the PHT Bottleneck distance)
    Under the assumptions of Theorem~\ref{thm::stability} for all $i\ge 0$,
    $$d^{\pht^i}_{\infty,\infty}(M,N) \le \e$$ 
\end{cor}

\begin{rmk}
    One can also bound the $(\infty,q)$-PHT distance when $q\neq \infty$. 
    To see this, the $(\infty,q)$-PHT distance is the $q$-th integral norm of the bottleneck distance, integrated over all directions.
    Consequently, the $(\infty,q)$-PHT distance is bounded by $\e\cdot {\text{Area}(\bS^{d-1})}^{\frac{1}{q}}$, thereby establishing stability. 
\end{rmk}

\begin{exmp}\label{exmp:PHT-v-distance}
We now calculate and bound some PHT distances between the shapes $A$ (in blue) and $B$ (in red) in Figure~\ref{figure:v shapes dist}. 
The normals $v_1, v_2$ are $(\frac{2}{\sqrt{5}},\frac{1}{\sqrt{5}})$ and $(-\frac{2}{\sqrt{5}},\frac{1}{\sqrt{5}})$.
\begin{figure}[h]
    \centering
    \includegraphics[width=0.5\textwidth]{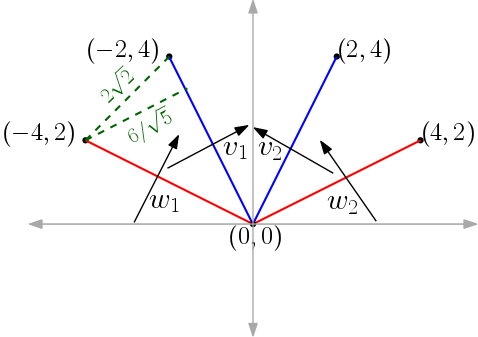}
    \caption{Distances between the PHTs of A (in blue) and B (in red)}
    \label{figure:v shapes dist}
\end{figure}
Since all the sublevel sets of the shapes are contractible, it suffices to only consider $\pht^0$ of the two shapes. 
In particular, the PHT bottleneck distance in degree 0 is
\begin{align*}
 d^{\pht^0}_{\infty,\infty}(A,B)=& \max_{\theta\in \bS^1} \mathcal{W}_\infty\left(\PH^0(A,\theta),\PH^0(B,\theta)\right) \\=& \left|(-4,2)\cdot \left(\dfrac{2}{\sqrt{5}},\dfrac{1}{\sqrt{5}}\right)\right|\\=& \frac{6}{\sqrt{5}}.     
\end{align*}
The direction at which the maximum occurs is $v_1$ and $v_2$. 

The PHT sheaf interleaving distance is hard to compute in practice. However, the application of Theorem~\ref{thm::stability} gives an upper bound on the PHT sheaf interleaving distance. In this example, we have that $A$ and $B$ are homeomorphic to each other. Explicitly, the linear map sending $(-4,2)$ to $(-2,4)$ can be extended to a homeomorphism $\varphi: B\to A $ where $(x,\frac{1}{2}|x|)\mapsto (\frac{1}{2}x,|x|)$. Further, the maximum movement of points under map $\varphi$ is, $\|(x,\frac{1}{2}|x|) - \varphi(x,\frac{1}{2}|x|)\|_{\mathbb{R}^2} \le 2\sqrt{2}$. So, by Theorem~\ref{thm::stability} and Lemma~\ref{lem:relation-between-dist} $$2\sqrt{2}\ge d_I\left(\pht(A)),\pht(B)\right)\ge \frac{6}{\sqrt{5}}.$$

\end{exmp}

\subsection{Point Samples for Approximating the PHT}\label{sec:sampling}

After having established various distances on the PHT, we are now in a position to describe how to approximate a shape with point samples so that the resulting PHTs are close.
We note that this is the only section where we require a manifold hypothesis on our shapes $M$.
This is because the problem of approximating a general constructible set (or stratified space) is not well understood.
Instead we rely on the following sampling and inference result, which makes implicit use of the injectivity radius of an embedded submanifold.
This is encoded via the condition number $\tau$, but we refer to \cite[\S2]{niyogi} for a more detailed description of this.

% In \cite{niyogi} the authors considered the problem of how to determine homology of a submanifold of $\R^d$ using a point sample.
% Their result reads as follows:

\begin{thm}[Theorem 3.1 in \cite{niyogi}]\label{thm:niyogi}
	Let $ M $ be a compact submanifold of $ \bR^d $ with condition number $ \tau $. Let $ \bar{x}=\{x_1,..,x_n\} $ be a set of $ n $ points drawn independently and identically from a uniform probability measure on $ M $. Let $ 0<\e <\tau/2 $. Let $ U=\cup_{x\in \bar{x}}B_\e(x) $ be the union of the open balls of radius $ \e $ around the sample points. Then for all
	\[ n>\beta_1\bigg(\log\beta_2 + \log\frac{1}{\delta} \bigg) \] the homology of $ U $ equals the homology of $ M $ with probability 
	$>1-\delta$.
	Here $\beta_1$ and $\beta_2$ are constants that depend on the condition number $\tau$, $\e$ and the volume of $ M $. 
The bound on $ n $ ensures that with high probability the sample is $ \frac{\e}{2} $-dense in $ M$.
\end{thm}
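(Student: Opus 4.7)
The plan is to split the proof into a deterministic geometric claim and a probabilistic sampling estimate, following the Niyogi--Smale--Weinberger strategy. The geometric claim is: whenever $\bar x$ is $\epsilon/2$-dense in $M$ and $\epsilon<\tau/2$, the $\epsilon$-offset $U=\bigcup_i B_\epsilon(x_i)$ is homotopy equivalent to $M$. The probabilistic claim bounds, in terms of $n$, the probability that an i.i.d.~uniform sample on $M$ fails to be $\epsilon/2$-dense, and then $n$ is chosen so this probability is below $\delta$.

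For the geometric claim I would use the nearest-point projection $\pi$ onto $M$, which is well-defined and smooth on the tubular neighborhood of radius $\tau$. Since $\epsilon<\tau/2<\tau$, every point of $U$ lies in this tubular neighborhood, so $\pi|_U$ is defined. One first checks $\pi(U)=M$: each $m\in M$ has a sample $x_i$ within $\epsilon/2$, and an elementary reach estimate shows the normal fiber over $m$ meets $B_\epsilon(x_i)$. Then the straight-line homotopy $(y,s)\mapsto (1-s)\,y+s\,\pi(y)$ inside $\R^d$ gives the required deformation retraction, provided one verifies it never leaves $U$. This is where the buffer $\epsilon<\tau/2$ (rather than just $\epsilon<\tau$) enters: the trajectory from $y\in B_\epsilon(x_i)$ to $\pi(y)$ passes through small balls around intermediate samples supplied by $\epsilon/2$-density, and the reach controls the curvature of the fiber enough that this can be verified quantitatively.

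For the probabilistic claim I would pick a minimal $\epsilon/4$-net $\{p_1,\dots,p_N\}$ on $M$; if every ball $B_{\epsilon/4}(p_k)\cap M$ contains some $x_i$, then the sample is automatically $\epsilon/2$-dense on $M$. The reach hypothesis yields a lower bound $\mathrm{vol}\bigl(B_{\epsilon/4}(p_k)\cap M\bigr)\ge c_d\,(\epsilon/4)^{\dim M}$ via a Bishop--Gromov style volume comparison (or the elementary reach-based version used in \cite{niyogi}). For uniform sampling the probability that one prescribed net-ball is missed by all $n$ samples is at most
\[
\Bigl(1-\tfrac{c_d(\epsilon/4)^{\dim M}}{\mathrm{vol}(M)}\Bigr)^{\!n}\le \exp\!\Bigl(-n\,\tfrac{c_d(\epsilon/4)^{\dim M}}{\mathrm{vol}(M)}\Bigr),
\]
and a union bound over the $N$ net-balls gives failure probability at most $N\exp(-n\cdot c)$ with $c=c_d(\epsilon/4)^{\dim M}/\mathrm{vol}(M)$. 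Setting this $\le\delta$ and solving for $n$ recovers the stated form $n>\beta_1(\log\beta_2+\log(1/\delta))$, with $\beta_1=1/c$ and $\beta_2=N$, both depending only on $\tau$, $\epsilon$, and $\mathrm{vol}(M)$.

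The main obstacle is the geometric deformation retraction, specifically checking that the straight-line homotopy along normal fibers never leaves $U$. The delicate step is to bound, uniformly in $y\in U$, how the distance from the sliding point to the nearest sample $x_j$ evolves along the trajectory; this requires tracking both how the fiber bends (controlled by $\tau$) and how sample density refreshes ``cover membership'' as the point moves toward $M$. Once this is established, the probabilistic half is a standard covering-number calculation, and chaining the two yields the theorem.
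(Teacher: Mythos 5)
The paper does not prove this statement---it is imported verbatim as Theorem~3.1 of Niyogi--Smale--Weinberger \cite{niyogi}---so the only meaningful comparison is with the proof in that reference, and your reconstruction does follow its architecture faithfully: a deterministic claim (an $\epsilon/2$-dense sample with $\epsilon$ small relative to $\tau$ forces $U$ to deformation retract onto $M$ via the normal projection $\pi$) plus a covering-number/union-bound estimate for the probability of $\epsilon/2$-density, with $\beta_1$ and $\beta_2$ read off exactly as you describe. The probabilistic half of your sketch is complete and correct as written.

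The gap you flag in the geometric half is real, and your description of how to close it is not quite how \cite{niyogi} does it. The issue is not that the straight-line trajectory ``passes through small balls around intermediate samples'' as it slides toward $M$; rather, NSW fix a point $p\in M$ and prove that the entire fiber slice $\pi^{-1}(p)\cap U$ is star-shaped about $p$ (their Lemma~4.1), which immediately makes the fiberwise straight-line homotopy stay in $U$. The star-shapedness argument is a two-case analysis on a point $v\in\pi^{-1}(p)\cap B_\epsilon(x_i)$: when $\|v-p\|$ is small, the segment $[p,v]$ lies in the ball $B_\epsilon(x_j)$ around a sample $x_j$ within $\epsilon/2$ of $p$ (this is the only place density enters the deterministic claim); when $\|v-p\|$ is large, the reach bounds the angle between $v-x_i$ and the normal direction at $p$ well enough that $[p,v]$ stays inside the original ball $B_\epsilon(x_i)$. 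This yields the retraction under the weaker hypothesis $\epsilon<\sqrt{3/5}\,\tau$, of which the theorem's $\epsilon<\tau/2$ is a special case. Without this lemma (or an equivalent quantitative containment argument) your proposal is an accurate plan but not yet a proof; with it, the two halves chain together exactly as you say.
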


We let $\cU:=\{B_{\e}(x)\}$ be the balls produced by Theorem \ref{thm:niyogi}.
As the set of sampled points $X=\{x\}$ is embedded in $\R^d$, we can consider the Voronoi cell decomposition $\mathcal{V}=\{V_x\}$ of $\R^d$ generated by $X$---this is the decomposition of $\R^d$ into convex regions where every point $y\in \mathrm{int} V_x$ is closest to $x$ and no other point $x'\in X$.
The \define{alpha complex} \cite{edelsbrunner} is defined to be the (geometric) nerve of the cover $\{B_{\e}(x)\cap V_x\}$ of $U$; see Figure \ref{fig: deformation}.
By the nerve lemma, the alpha complex is homotopy equivalent to union of the balls $ U $ and so with high probability Theorem \ref{thm:niyogi} says that the homology of $ K $ is equal to homology of $ M $.
We now promote this to an observation about the PHT.

\begin{cor}(Approximation)\label{main=approx}
    	Let $ M $ be a compact submanifold of $ \bR^d $ with condition number $ \tau $. Let $ \bar{x}=\{x_1,..,x_n\} $ be a set of $ n $ points drawn independently and identically from a uniform probability measure on $ M $. Let $ 0<\e <\frac{\tau}{2} $. Let $ U=\cup_{x\in \bar{x}}B_\e(x) $ be the union of the open balls of radius $ \e $ around the sample points. Let $ K $ be the alpha complex of $ U $. Then for all
	\[ n>\beta_1\bigg(\log\beta_2 + \log\frac{1}{\delta} \bigg) \] we have that, 
%	\begin{equation}\label{pht-interleavingformula}
$	   d_I(\pht(M),\pht(K)) \le \e^2$
%	\end{equation}
with high confidence i.e. probability > 1- $\delta$.
\end{cor}

\begin{proof}
We show that the assumptions of Theorem~\ref{thm::stability} are satisfied with $ 0<\e < \tau/2$ and then apply Theorem~\ref{thm::stability} to conclude the result. 
We need to find a homotopy equivalence $\varphi:K\to M$ and $\psi:M\to K$ that satisfies the assumptions of Theorem~\ref{thm::stability}.
We do this by passing through the union of balls $U$ as an intermediary. 

\textit{Homotopy equivalence of $M$ and $U$:}  
Since the sample is $\e/2$-dense in $M$ with high probability, there is an inclusion of $M$ into $U$ with high probability. Let $\iota$ be this inclusion and let $f$ be the projection that sends $ x\mapsto \arg\min_{p\in M} \|p-x\| $. By the definition of condition number, the distance between any $q\in M$ to the medial axis is greater than $\tau > 2\e$. The well-definedness of $f$ is equivalent to $x\in U$ not contained in the medial axis. Suppose $x\in U$ is in the medial axis, so $\|p-x\| > 2\e$ for every $p\in M$. Since $\bar{x}$ is $\e/2$-dense in $M$ w.h.p and $U=\cup_{y\in\bar{x}}B_\e(y)$, it must be that for any $p\in M$, $\|p-x\|\le \|p-y\| + \|y-x\| < \frac{\e}{2} + \e < \frac{3}{2}\e$. This is a contradiction to $x$ in the medial axis. This proves that $f$ is well-defined. The map $f$ is a deformation retraction and can be seen by taking the homotopy $ H_U(x,t) = tx+ (1-t)f(x)$ for all $ x\in U $ and $ t\in [0,1] $. Further, $\|H_U(t,x)- x\| = \| tx+(1-t)f(x) -x \| = (1-t)\|x-f(x) \| < (1-t)\e/2 \le \e$. 

\textit{Homotopy equivalence of $U$ and $K$:} 
We have the inclusion map $j: K\to U$. 
There is a retract $g: U \to  K $ which follows the lines in $U$ connected to the nearest point in $K$; see Figure~\ref{fig: deformation}. 
We call the homotopy that follows these lines $G_U$. Since $U$ is a union of balls of radius $\e$, the homotopy $G_U$ does not move the points of $U$ more than $\e$. 

\begin{figure}[h]
    \centering
    \includegraphics[width=0.5\textwidth]{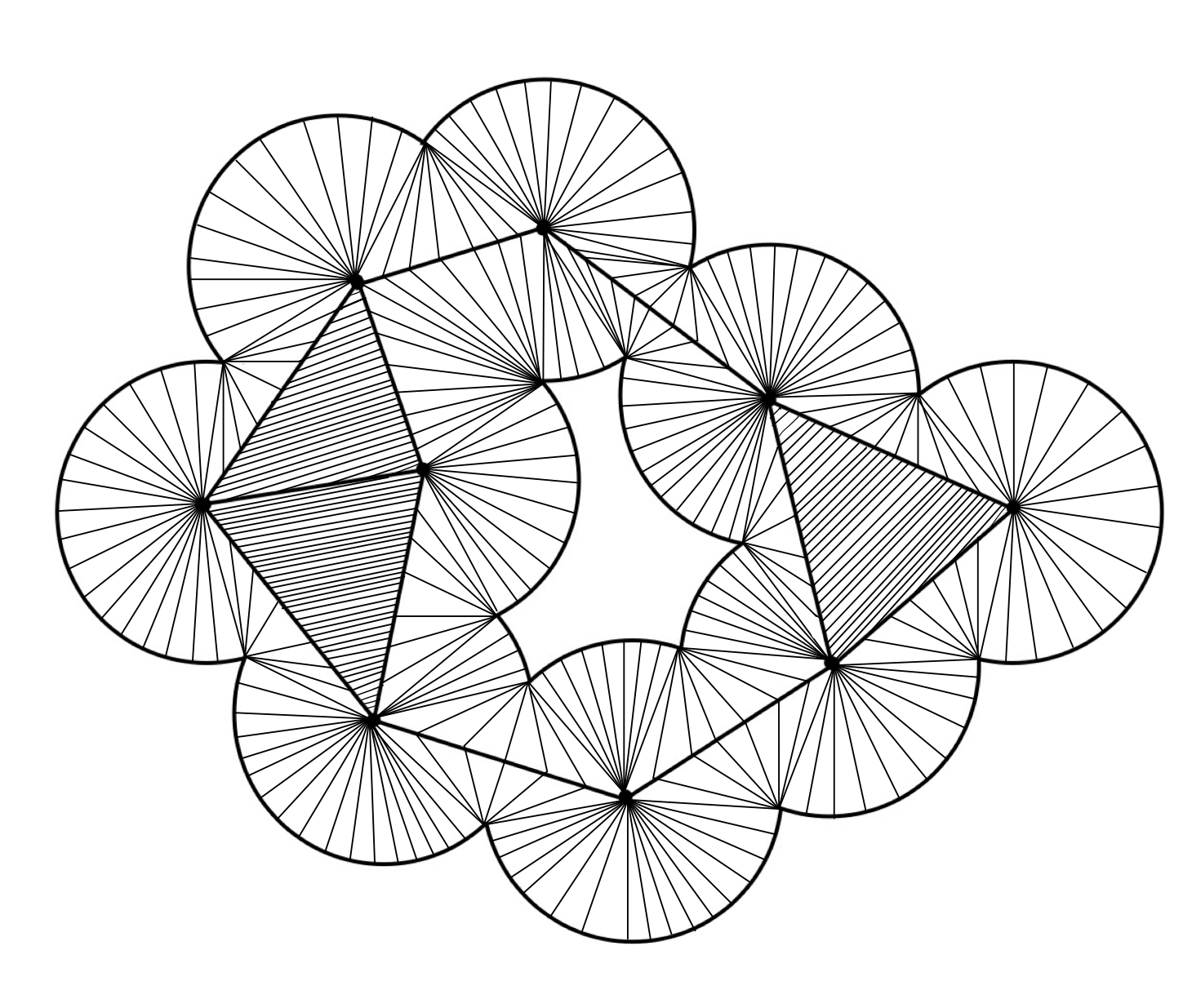}
    \caption{A collection of disks in the plane has a simpler summary given by its alpha complex, which is the geometric simplicial complex shown with shaded simplices above. The radial lines in each disk shows how points in the union of the disks deformation retract onto its alpha complex. Figure inspired by Fig.~3.1 of \cite{edelsbrunner}.}
    \label{fig: deformation}
\end{figure}
	
Let $\varphi:= f\circ j$ and $\psi:= g\circ \iota$. On composing, $\varphi \circ \psi = f\circ j \circ g \circ \iota \sim f \circ \text{Id}_U \circ \iota = f \circ \iota \sim \text{Id}_M  $ and similarly $\psi \circ \varphi \sim \text{Id}_K.$
The radius of balls are less than $\e$ and so for $x\in K$, $\|x-f\circ j(x)\| \le  \e$ and so $\|x - \varphi(x) \|^2 < \e^2$. Since the sample points are $\e/2$-dense, for $y\in M$, $\|y-g\circ \iota(y)\| \le \e/2 < \e$ and so $\|y-\psi(y)\|^2\le \e^2. $ 
 Since the homotopy maps $H_U$ and $G_U$ satisfy the conditions $\|H_U(x,t)-x\|\le \e$ and $\|G_U(x,t)-x\|\le \e$ for all $x\in U$, the homotopy map $K_1$ connecting $\varphi \circ \psi$ and $\text{Id}_M$ satisfy $\|K(x,t)-x\|\le \e$ for all $x\in M$. The case for $K_2$ connecting $\psi \circ \varphi$ and $\text{Id}_K$ is similar.
 Applying Theorem~\ref{thm::stability} gives $\e^2$-interleaving of the PHTs of $M$ and $K$. 
\end{proof}

\section{Discussion and Future Work}\label{sec:future-work}

In this paper we have outlined an algebraic construction of shape space that uses cohomology, sheaves, and ``higher'' derived variations on these.
We do this while also maintaining close contact with the simpler, tomographic integral transform that is based on Euler characteristic \cite{schapira-tom}. 
Our shape space works with shapes more general than manifolds, such as semialgebraic sets, and the ability to triangulate our shapes is more a feature of convenience than an absolute necessity.
By leveraging connections to persistent homology, we are able to also metrize our algebraic shape space in various ways and prove stability and approximation results for these.

We note that the use of sheaves to provide a rich mathematical theory for summarizing shapes is in many ways a natural evolution of previous attempts to construct shape space.
Approaches that use landmarks or diffeomorphisms each naturally lead to shape spaces modeled via fiber bundles.
Since sheaves provide a formalism for generalizing fiber bundles, where the rank can drop from point to point, we hope that our framework leads to a generalization of the landmark and diffeomorphism-based approaches as well.
However, there are important differences between these approaches---namely the use of similarity transformations and group actions---that need to be incorporated more fully into our framework as well.
We conclude with three directions for future research:

\begin{enumerate}
    \item Can we characterize the image of our shape space sheaf? $$\F: \CS(\R^d)^{op} \hookrightarrow \D^b(\Shv(\bS^{d-1}\times \R))$$ 
    In more detail, if someone gives us a derived sheaf $\calG \in \D^b(\Shv(\bS^{d-1}\times \R))$ can we certify if there is a shape $M\in \CS(\R^d)$ such that $\PHT(M)\cong \calG$? Even more fundamentally, if an oracle claims that a collection of barcodes is the result of sampling $\PHT(M)$ at directions $v_1,\ldots,v_n$, is there any way of knowing if the oracle is telling the truth? If they are, can we provide an approximate reconstruction of the shape $M$ just from this information? All of these problems are open and the last question bears some resemblance to Minkowski's theorem \cite{klain2004minkowski} for reconstructing polyhedra from normal and facet data.
    \item Can we leverage the topological simplification procedures of persistent homology \cite{ELZ-2000} to perform automated simplification of shape space? As pointed out in \cite[\S 3.5]{morozov2008homological}, there must be obstructions for dimensions 3 and higher because of the existence of the Poincar\'e homology sphere. Nevertheless, understanding obstructions to these simplifications would shed algorithmic light on some of the deepest questions in algebraic topology, such as the Poincar\'e conjecture, which was established by Perelman using PDE methods.
    \item Finally, can we embed traditional models of shape space into our current model? Does an optimal transport problem between shapes establish a corresponding optimal transport problem on sheaves? By the continuity property of the PHT, this must certainly be true at the set-theoretic level, but can we promote this correspondence to something with more algebraic structure? Preliminary work in this direction indicates that isotopies of shapes should lift to zigzags of sheaf morphisms, where sheaf cohomology can be used as an obstruction to comparing two shapes.
\end{enumerate}

\section*{Acknowledgements}

The authors would like to acknowledge conversations with Kirsten Wickelgren, Kate
Turner, Ezra Miller, Mark Goresky and Henry Kirveslahti. 
The authors would like to acknowledge partial funding from NASA Contract 80GRC020C0016, HFSP
RGP005, NSF DMS 17-13012, NSF BCS 1552848, NSF DBI 1661386, NSF IIS
15-46331, NSF DMS 16-13261, as well as high-performance computing partially
supported by grant 2016-IDG-1013 from the North Carolina Biotechnology Center.
Any opinions, findings, and conclusions or recommendations expressed in this material are those of the author(s) and do not necessarily reflect the views of any of the funders.

\bibliographystyle{plain}
\bibliography{allref.bib}

\newpage
\appendix
\section{Infinity Category Version}\label{app:infinity}

Our main task of gluing PHTs together runs into a subtle problem of ``gluing in the derived category,'' which has motivated some of the development of $\infty$-categories \cite[p.12]{lurieha}.
In this section we provide a brief overview of this theory and show how our gluing result for PHTs (Theorem~\ref{thm:main-hpty-sheaf}) follows from this more general theory.
As the theory of $\infty$-categories is quite involved, we give only a sketch of the main ideas and refer to \cite{lurieha} for a complete treatment of the subject.

At a high-level, the $\infty$-category perspective works by maintaining a data structure that keeps track of maps between objects and separates out composition in a clear way.
By doing so, it allows us to consider compositions that are identified with other morphisms only ``up to homotopy'' or via a similar identification. 
The usual refrain is that an $\infty$-category not only has 1-morphisms, like an ordinary category does, but also 2-morphisms that relate 1-morphisms, 3-morphisms that relate 2-morphisms, and all the way up \emph{ad infinitum}.
The data structure that organizes all these morphisms is called a simplicial set, and an $\infty$-category is a simplicial set that has certain ``composition'' properties that allow us to fill out a simplex.

The way this perspective addresses the lack of gluing in the derived category is subtle.
As mentioned in Section \ref{sec:derived-sheaf-theory}, the derived category lacks limits and colimits, but instead has homotopy limits and colimits.
The difference between limits and homotopy limits is that although there are objects that one might call a limit (like the kernel of a morphism), they typically lack the uniqueness and functoriality properties that are required of a genuine limit.
The infinity category perspective repairs this by keeping track of all the morphisms that might exist mapping to a candidate limit object and uses the ``higher order'' morphisms described above to address the surfeit or lack of ordinary 1-morphisms.
This is just one bit of evidence that the $\infty$-category theory approach might be the right foundation for modern pure mathematics, which should in turn percolate into future applied mathematics.

% The classical derived category discussed in Section \ref{sec:derived-sheaf-theory} is not well-behaved in the sense that it lacks limits and colimits, in particular it does not remember why maps are homotopic. \justin{Will dumb this down for the masses.} This defect can be remedied by looking at the ``higher" categorical analogue of the derived category, which contains homotopy coherent structures. 

% In this section, we provide the required background on infinity categories and explain how the sheaf axiom in Theorem~\ref{thm:main-hpty-sheaf} is automatically satisfied in the context of infinity categories. Refer to \cite{lurieha} for a complete treatment of the subject.  

\subsection{Simplicial Sets and Infinity Categories}

Let $\Delta$ denote the simplex category, whose objects are non-empty totally ordered finite sets and whose morphisms are order-preserving maps. 
A \define{simplicial set} is a functor $S:\Delta^{\mathrm{op}} \to \mathrm{Set}$. 
It is typical to call $S([n])$ the set of $n$-simplices in $S$. Here $[n]$ denotes the totally ordered set $\{0<1<\cdots < n\}$ for $n\ge 0$.
In this setting, the \define{standard $n$-simplex} is the simplicial set $\Delta^n:= \mathrm{Hom}_{\Delta}(-,[n]). $. 
The $i$th inner horn $\Lambda^n_i$ is the subcomplex of $\Delta^n$ obtained by removing the $n$-simplex and its $i^{th}$ face.  

\begin{defn}[Infinity Category]
An $\infty$-category is a simplicial set $\mathcal{C}$ which has the
following property: for any $0 < i < n$, any map $f: \Lambda^n_i \to \mathcal{C} $ admits an extension $f : \Delta^n \to \mathcal{C}$.
\end{defn}

\begin{exmp}[Nerve of a Category]
    Every ordinary category $\mathbf{C}$ has an associated $\infty$-category via the nerve construction $\mathcal{N}(\mathbf{C})$.
    The 0-simplices of the nerve are objects of $\mathbf{C}$, the 1-simplices are morphisms of $\mathbf{C}$, the 2-simplices are strings of compositions of morphisms of length 2, e.g., $g\circ f:x\to y \to z$, the 3-simplices are strings of composition of length 3 and so on.
\end{exmp}

Morphisms $f,g:x\to y$ for any objects $x,y$ in an infinity category are said to be homotopic if there is a 2-simplex of the form:
% https://q.uiver.app/?q=WzAsMyxbMCwxLCJ4Il0sWzEsMCwieSJdLFsyLDEsInkiXSxbMCwxLCJmIl0sWzAsMiwiZyIsMl0sWzEsMiwiXFxtYXRocm17aWR9X3kiXV0=
\[\begin{tikzcd}
	& y \\
	x && y
	\arrow["f", from=2-1, to=1-2]
	\arrow["g"', from=2-1, to=2-3]
	\arrow["{\mathrm{id}_y}", from=1-2, to=2-3]
\end{tikzcd}\]
It can be seen that homotopy of morphisms is an equivalence relation on the set of morphisms between any two objects.

\begin{defn}(Homotopy category)
The homotopy category of an infinity category $\mathcal{C}$, denoted $\mathrm{h}\mathcal{C}$, is the category whose objects are the objects of $\mathcal{C}$ and whose morphisms are homotopy classes of morphisms in $\mathcal{C}$. 
\end{defn}

\subsection{Homological Algebra in Infinity categories}
%\shreya{cohomological grading }

Let $\mathcal{A}$ be an abelian category with enough injective objects. We denote $\text{ch}(\mathcal{A})$ to be the category of chain complexes valued in $\mathcal{A}$. This category can be promoted to a differentially graded category by enriching it in the category of chain complexes of abelian groups $\mathrm{ch(\bf{Ab})}$. In other words, the hom-sets in $\cha$ can be replaced by mapping complexes as below:  
If $A^\bullet$ and $B^\bullet$ are chain complexes, $$ \underline{\hom}(A^{\bullet},B^{\bullet})[d]= \prod_{n\in \mathbb{Z}} \hom_{\mathcal{A}}(A^n,B^{n+d}) $$ and the differentials $\partial: \underline{\hom}(A^{\bullet},B^{\bullet})[d-1] \to \underline{\hom}(A^{\bullet},B^{\bullet})[d] $ is given by $\partial f_n= \partial_B\circ f_{n-1} - (-1)^d f_{n}\circ \partial_A$.

%The Dold-Kan correspondence gives an equivalence of categories, 
%$DK: \text{Func}(\Delta^{op},\mathcal{A}) \to \textbf{ch}(\mathrm{Ab})$. So, this allows us to think of $\mathrm{ch}(\mathcal{A})$ as category enriched in simplicial sets. 
% From now on, we refer to $\text{ch}(\mathcal{A})$ as a simplicially enriched category. 
 
We can build an infinity category from a category enriched in $\text{ch(\bf{Ab})}$ by taking the differentially graded nerve, which is similar to the construction outlined above. 
%The nerve construction is explained in detail in \cite{lurieha}.

\begin{exmp}[Differentially graded nerve of $\text{ch}(\mathcal{A})$]
\hfill
\begin{itemize}
    \item A 0-simplex of $N_{dg}(\cha)$ is a chain complex $A^\bullet$. 
    \item A 1-simplex of $N_{dg}(\cha)$ is a pair of chain complexes $A^\bullet, B^\bullet$ and a morphism $f\in \underline{\mathrm{hom}}(A^\bullet,B^\bullet)_0$ such that $\partial f= 0$. In other words, 1- simplices between $A^\bullet$ and $B^\bullet$ are morphisms $f$ such that each of the squares commute.  
    
\[\begin{tikzcd}
	\cdots & {A^{-1}} & {A^0} & {A^{1}} & \cdots \\
	\cdots & {B^{-1}} & {B^0} & {B^1} & \cdots
	\arrow["{\partial_A}", from=1-2, to=1-3]
	\arrow["{\partial_A}", from=1-3, to=1-4]
	\arrow["{\partial_B}", from=2-2, to=2-3]
	\arrow["{\partial_B}", from=2-3, to=2-4]
	\arrow["f", from=1-2, to=2-2]
	\arrow["f", from=1-3, to=2-3]
	\arrow["f", from=1-4, to=2-4]
	\arrow[from=1-4, to=1-5]
	\arrow[from=2-4, to=2-5]
	\arrow[from=2-2, to=2-1]
	\arrow[from=1-2, to=1-1]
\end{tikzcd}\]
    \item A 2-simplex of  $N_{dg}(\cha)$ consists of a triple of chain complexes $A^\bullet,B^\bullet$ and $C^\bullet$ and morphisms $f\in \underline{\mathrm{hom}}(A^\bullet,B^\bullet)_0$, $g\in \underline{\mathrm{hom}}(B^\bullet,C^\bullet)_0$ and $h\in \underline{\mathrm{hom}}(A^\bullet,C^\bullet)_0$ such that $\partial f=\partial g=\partial h =  0$ and a morphism $h'\in \hom(A^\bullet,C^\bullet)$ such that $\partial h' = g\circ f - h$. 
% https://q.uiver.app/?q=WzAsMyxbMSwwLCJCXlxcYnVsbGV0Il0sWzAsMSwiQV5cXGJ1bGxldCJdLFsyLDEsIkNeXFxidWxsZXQiXSxbMSwwLCJmIl0sWzAsMiwiZyJdLFsxLDIsImgiXSxbMSwyLCJoJyIsMix7ImN1cnZlIjozfV0sWzYsNSwiIiwyLHsic2hvcnRlbiI6eyJzb3VyY2UiOjIwLCJ0YXJnZXQiOjIwfX1dXQ==
\[\begin{tikzcd}[column sep=huge]
	& {B^\bullet} \\
	{A^\bullet} && {C^\bullet}
	\arrow["f", from=2-1, to=1-2]
	\arrow["g", from=1-2, to=2-3]
	\arrow[""{name=0, anchor=center, inner sep=0}, "h", from=2-1, to=2-3]
	\arrow[""{name=1, anchor=center, inner sep=0}, "{h'}"', curve={height=18pt}, from=2-1, to=2-3]
	\arrow[shorten <=2pt, shorten >=2pt, Rightarrow, from=1, to=0]
\end{tikzcd}\]
    \item and so on.

\end{itemize}

\end{exmp}

 \begin{rmk}
 The homotopy category of the differentially graded nerve  $hN_{dg}(\cha)$ is the usual homotopy category of chain complexes in $\mathcal{A}$, denoted as $\mathcal{K(A)}$. So, the $\infty$-category $N_{dg}(\cha)$ provides an $\infty$-categorical version of homological algebra in $\mathcal{A}$. 
 In particular, there exists a derived infinity category which is the $\infty$-localization of the category of chain complexes in $\mathcal{A}$ at the class of quasi-isomorphisms. 
 Equivalently, by \cite{lurieha}, the derived $\infty$-category can be defined to be the differentially graded nerve of the category of chain complexes of injectives in $\mathcal{A}$. 
 \end{rmk}

Let $\text{ch}^+(\mathcal{A})$ denote the full
subcategory of $\text{ch}(\mathcal{A})$ spanned by those chain complexes that are bounded from below i.e. for small $n<0$, $A^n\simeq 0$.

%This category provides an infinity categorical version of homological algebra in $\mathcal{A}$. In particular, there exists a derived infinity category which is the infinity-localization of the category of chain complexes in $\mathcal{A}$ at the class of quasi-isomorphisms. Equivalently (as shown in Lurie), the derived catgeory can also be defined as follows. 

\begin{defn}(Derived $\infty$-Category) 
    The derived $\infty$-category of $\mathcal{A}$ is the differentially graded nerve of $\text{ch}(\mathcal{A}_{\text{inj}})$, that is $\mathcal{D^+(A)} = \mathcal{N}_{dg}(\text{ch}^+(\mathcal{A_\text{inj}}))$ where $\mathcal{A}_{\text{inj}}$ is the full subcategory of $\mathcal{A}$ spanned by the injective objects. 
\end{defn}

This category is a stable $\infty$-category and carries a triangulated structure.  
In particular, the homotopy category of this category, $h\mathcal{D}^{+}(\mathcal{A})$ is the classical derived category in homological algebra. However, unlike the derived category in classical homological algebra, this category has finite limits and colimits by virtue of being a stable infinity category \cite[Prop.~1.1.3.4]{lurieha}.

As in the case of classical homological algebra, one can also study the functoriality of the derived $\infty$-category. 
Suppose $\mathcal{F}:\mathcal{A}\to \mathcal{B}$ is an additive functor between abelian categories i.e. the functor preserves the 0 object and preserves direct sums. 
The object-wise application of the functor $\mathcal{F}$ gives a functor $\text{ch}(\mathcal{F}):\text{ch}(\mathcal{A})\to \text{ch}(\mathcal{B})$ which is an enriched functor over chain complexes of abelian groups. 
The functoriality of the differentially graded nerve construction gives a functor $\mathcal{N}_{dg}(\text{ch}(\mathcal{F})): \mathcal{N}_{dg}(\text{ch}(\mathcal{A}))\to \mathcal{N}_{dg}({\text{ch}(\mathcal{B}})) $. 
In particular, this functor commutes with finite limits and colimits because the functor preserves sums and the terminal object 0. 
Similar to the case of classical homological algebra this functor $\mathcal{N}_{dg}(\text{ch}(\mathcal{F}))$ induces a right derived functor $RF: D(\mathcal{A})\to D(\mathcal{B})$ by taking its left Kan extension along the projection $\rho_A: \mathcal{N}_{dg}(\text{ch}(\mathcal{A})) \to D(A) $.

%\shreya{complete the definition properly }
%\begin{defn}(Right derived functor)
%The right derived functor $R\mathcal{F}:\mathcal{D}(\mathcal{A})\to \mathcal{D(B)}$ of $\mathcal{F}:\mathcal{A}\to \mathcal{B}$ is the left Kan extension of $\mathcal{K(F)}$ along the projection $\rho$. 
%\end{defn}

A surprising property of functors of stable infinity categories is that left exact (equivalently right exact) functors are exact, i.e., they commute with finite limits and colimits \cite[Prop.~1.1.4.1]{lurieha}. 
In particular, by the universal property of $\mathcal{D}^+(\mathcal{A})$ \cite[Thm.~1.3.3.2]{lurieha} we have that a left exact functor $\mathcal{F}:\mathcal{A}\to\mathcal{B}$, gives a left exact (and hence exact) functor, called the \emph{right derived functor}, $RF:\mathcal{D}^+(\mathcal{A})\to \mathcal{D}^+(\mathcal{B})$.

\subsection{The PHT viewed as an object of the Derived $\infty$-Category.}

We can now specialize to the case where the abelian categories are $\mathcal{A}= \Shv(M\times\bS^{d-1}\times\bR)$ and $\mathcal{B}=\Shv(\bS^{d-1}\times\bR)$ where $M\in \CS(\bR^{d})$. 
As defined before, let $f:M\times\bS^{d-1}\times\bR\to \bS^{d-1}\times\bR$ be the projection on the last two factors. 
Further, let $\iota_M: Z_M\to M\times\bS^{d-1}\times\bR$ be an embedding of spaces. 
The composition $f\circ \iota_M$ is written as $f_M$ (see Definition~\ref{defn:derived-PHT-sheaf}).

\begin{defn}[PHT: Infinity Category Version]\label{defn:PHT-infinity}
    The $\infty$-categorical definition of the PHT of $M\in \CS(\bR^d)$ is given by:
    \[\pht(M) = R(f_M)_*\Bbbk_{Z_M} \in \mathcal{D}^+(\Shv(\bS^{d-1}\times \bR)) \]
    where $\mathcal{D}^+(\Shv(\bS^{d-1}\times \bR))$ is the derived $\infty$-category of bounded below complexes.
\end{defn}

Since the right derived functor is exact (and hence commutes with finite limits and colimits), we have that the sheaf axiom for the assignment $M\mapsto \pht(M)$ for $M\in \CS(\mathbb{R}^d)$ is automatic. Precisely, for a finite covering $\{M_i \hookrightarrow M\}_{i\in I}$ we have: 
\begin{equation*}
 R(f_M)_*\Bbbk_{Z_M} = Rf_* (\iota_M)_*\mathbb{k}_{Z_M} = Rf_* \varprojlim(\iota_{M_i})_* \mathbb{k}_{Z_{M_i}} =  \varprojlim Rf_*(\iota_{M_i})_*\mathbb{k}_{Z_{M_i}} = \varprojlim R(f_{M_{i}})_*\mathbb{k}_{Z_{M_i}}  
\end{equation*}

%\begin{prop}
%Right derived functors are exact. \shreya{Explain ``exact'' in this category and remark how this is different than the usual notion of exactness.} 
%\end{prop}

%In particular, this means that right derived functors commute with finite limits and colimits \shreya{Please explain how this implication works}. So, the sheaf axiom for the PHT can be seen to be automatically true:

\end{document}